\newtheorem{theorem}{Theorem}
\newtheorem{corollary}{Corollary}
\newtheorem{proposition}{Proposition}
\newtheorem{lemma}{Lemma}
\newtheorem{remark}{Remark}
\theoremstyle{definition}
\newcommand{\cC}{\mathcal{C}}
\newcommand{\cE}{\mathcal{E}}
\newcommand{\cF}{\mathcal{F}}
\newcommand{\cG}{\mathcal{G}}
\newcommand{\cH}{\mathcal{H}}
\newcommand{\cJ}{\mathcal{J}}
\newcommand{\cM}{\mathcal{M}}
\newcommand{\cN}{\mathcal{N}}
\newcommand{\cP}{\mathcal{P}}
\newcommand{\cS}{\mathcal{S}}
\newcommand{\DD}{\mathbb{D}}
\newcommand{\EE}{\mathbb{E}}
\newcommand{\NN}{\mathbb{N}}
\newcommand{\PP}{\mathbb{P}}
\newcommand{\RR}{\mathbb{R}}
\newcommand*{\kl}[3][]{%
\ifthenelse{\isempty{#1}}{\operatorname{D}(#2\,\|\,#3)}%
{\operatorname{D}(#2\,\|\,#3\mid#1)}%
}
\newcommand*{\tv}[2]{\mathrm{d_{TV}}(#1, #2)}
\newcommand*{\norm}[1]{\left\|#1\right\|}
\newcommand*{\triplenorm}[1]{{\left\vert\kern-0.25ex\left\vert\kern-0.25ex\left\vert #1
    \right\vert\kern-0.25ex\right\vert\kern-0.25ex\right\vert}}
\newcommand*{\E}{\mathbb E}
\newcommand*{\ep}{\varepsilon}
\newcommand*{\defeq}{\coloneqq}
\newcommand*{\rd}{\mathrm{d}}
\newcommand*{\dd}{\, \rd}
\DeclarePairedDelimiter\floor{\lfloor}{\rfloor}
\newcommand*{\feps}{f_{\ep}}
\newcommand*{\geps}{g_{\ep}}
\newcommand*{\tse}{T_{\eps, (n, n)}}
\newcommand{\R}{\mathbb{R}}
\newcommand{\eps}{\varepsilon}
\newcommand{\pran}[1]{\left(#1\right)}
\newcommand{\brac}[1]{\left[#1\right]}
\newcommand{\sse}{\subseteq}
\DeclareMathOperator{\Ent}{Ent}
\newcommand{\KL}[2]{D_\text{KL}(#1 \| #2)}
\DeclareMathOperator{\supp}{\mathrm{supp}}
\begin{document}

\begin{center} {\LARGE{{Entropic estimation of optimal transport maps }}}

{\large{

\vspace*{.3in}

\begin{tabular}{cccc}
Aram-Alexandre Pooladian$^*$, Jonathan Niles-Weed$^{*\dagger}$\\
\end{tabular}
 
{
\vspace*{.1in}
\begin{tabular}{c}
				$^*$Center for Data Science, New York University\\
				$^\dagger$Courant Institute of Mathematical Sciences, New York University \\
 				\texttt{ap6599@nyu.edu, jnw@cims.nyu.edu}
\end{tabular} 
}

}}
\vspace*{.1in}

\today

\end{center}

\vspace*{.1in}
\begin{abstract}
We develop a computationally tractable method for estimating the optimal transport map between two distributions over $\R^d$ with rigorous finite-sample guarantees.
Leveraging an entropic version of Brenier's theorem, we show that our estimator---the \emph{barycentric projection} of the optimal entropic plan---is easy to compute using Sinkhorn's algorithm.
As a result, unlike current approaches for map estimation, which are slow to evaluate when the dimension or number of samples is large, our approach is parallelizable and extremely efficient even for massive data sets.
Under smoothness assumptions on the optimal map, we show that our estimator enjoys comparable statistical performance to other estimators in the literature, but with much lower computational cost.
We showcase the efficacy of our proposed estimator through numerical examples, even ones not explicitly covered by our assumptions. By virtue of Lepski's method, we propose a modified version of our estimator that is adaptive to the smoothness of the underlying optimal transport map.
Our proofs are based on a modified duality principle for entropic optimal transport and on a method for approximating optimal entropic plans due to Pal (2019).
\end{abstract}

\section{Introduction}

The goal of optimal transport is to find a map between two probability distributions that minimizes the squared Euclidean transportation cost.
This formulation leads to what is known as the \textit{Monge problem}~\citep{Mon81}:
\begin{align}\label{eq: monge_p}
\min_{T \in \mathcal{T}(P,Q)} \int \frac{1}{2}\| x - T(x)\|^2_2 \dd P(x),
\end{align}
where $P$ and $Q$ are two probability measures on $\Omega \sse \R^d$, and $\mathcal{T}(P,Q)$ is the set of \textit{admissible maps}\footnote{$\mathcal{T}(P,Q) := \{T : \Omega \to \Omega \ | \ T_\sharp P := P \circ T^{-1} = Q\}$}.
A solution to the Monge problem is guaranteed to exist if $P$ and $Q$ have finite second moments and $P$ is absolutely continuous; moreover, the optimal map enjoys certain regularity properties under stricter assumptions on $P$ and $Q$ (see \cref{sec: background_ot} for more information).
Due to their versatility and mathematical simplicity, optimal transport maps have found a wide range of uses in statistics and machine learning, \citep{carlier2016vector,chernozhukov2017monge,ArjChiBot17, huang2021convex, finlay2020learning,CouFlaTui14,CouFlaTui17, wang2010optimal, onken2021ot, makkuva2020optimal}, computer graphics \citep{SolPeyKim16,SolGoePey15,feydy2017optimal}, and computational biology \citep{schiebinger2019optimal,dai2018autoencoder}, among other fields.

In many applications, we are not given access to the full probability measures $P$ and $Q$, but independent samples from them, denoted $X_1, \ldots, X_n \sim P$ and $Y_1, \ldots, Y_n \sim Q$. 
When an optimal map $T_0 \in \mathcal{T}(P,Q)$ minimizing \cref{eq: monge_p} exists, it is natural to ask whether it is possible to estimate $T_0$ on the basis of these samples.
\cite{hutter2021minimax} investigated this question and proposed an estimator $\hat{T}_n$ which achieves 
\begin{align}\label{hr_bound}
\E \|\hat{T}_n - T_0\|^2_{L^2(P)} \lesssim n^{-\frac{2\alpha}{2\alpha - 2 + d}}\log^3(n) \,,
\end{align}
if $T_0 \in \cC^\alpha$, $P$ and $Q$ are compactly supported, and satisfy additional technical assumptions.
Moreover, they showed that the rate in~\cref{hr_bound} is minimax optimal up to logarithmic factors.
Though statistically optimal, their estimator is impractical to compute if $d > 3$, since it relies on a gridding scheme whose computational cost scales exponentially in the dimension.
Recently, \cite{deb2021rates} and \cite{manole2021plugin} proposed plugin estimators that also achieve the minimax estimation rate.
Though simpler to compute than the estimator of~\cite{hutter2021minimax}, these estimators require at least $O(n^3)$ time to compute and cannot easily be parallelized, making them an unfavorable choice when the number of samples is large.

We adopt a different approach by leveraging recent advances in computational optimal transport based on entropic regularization~\citep{PeyCut19}, which replaces~\cref{eq: monge_p} by
\begin{equation}\label{eq:ent_ot}
\inf_{\pi \in \Pi(P, Q)} \iint \frac 12 \|x - y\|^2 \dd \pi(x, y) + \ep \KL{\pi}{P \otimes Q}\,,
\end{equation}
where $\Pi(P, Q)$ denotes the set of couplings between $P$ and $Q$ and $\KL{\cdot}{\cdot}$ denotes the Kullback--Leibler divergence.
This approach, which was popularized by \cite{Cut13}, has been instrumental in the adoption of optimal transport methods in the machine learning community because it leads to a problem that can be solved by Sinkhorn's algorithm~\citep{Sin67}, whose time complexity scales \emph{quadratically} in the number of samples~\citep{AltWeeRig17}.
Moreover, Sinkhorn's algorithm is amenable to parallel implementation on GPUs, making it very attractive for large-scale problems~\citep{feydy2019interpolating,feydy2020fast,GenCutPey16,GenPeyCut18,altschuler2018massively}.

The efficiency and popularity of Sinkhorn's algorithm raise the tantalizing question of whether it is possible to use this practical technique to develop estimators of optimal transport maps with convergence guarantees.
In this work, we develop such a procedure.
Under suitable technical assumptions on $P$ and $Q$, 
we show that our estimator $\hat T$ enjoys the rate
\begin{equation*}
\EE \|\hat T - T_0\|_{L_2(P)}^2 \lesssim n^{- \frac{(\alpha + 1)}{2(d + \alpha + 1 )}} \log n
\end{equation*}
if the inverse map $T_0^{-1}$ is $\cC^\alpha$ and $\alpha \in (1, 3]$.
This rate is worse than~\cref{hr_bound}, but our empirical results show that our estimator nevertheless outperforms all other estimators proposed in the literature in terms of \emph{both} computational and statistical performance.
The estimator we analyze was originally suggested by~\cite{seguy2017large}, who also showed consistency of the entropic plan in the large-$n$ limit if the regularization parameter is taken to zero sufficiently fast.
However, to our knowledge, our work offers the first finite-sample convergence guarantees for this proposal.

Our estimator is defined as the barycentric projection~\citep{AmbGigSav08} of the entropic optimal coupling between the empirical measures arising from the samples.
The barycentric projection has been leveraged in other works on map estimation as a straightforward way of obtaining a function from a coupling between two probability measures~\citep{deb2021rates}.
However, in the context of entropic optimal transport, this operation has a more canonical interpretation in light of Brenier's theorem~\citep{Bre91}.
Brenier's result says that the solution to~\cref{eq: monge_p} can be realized as the gradient of the function which solves the dual problem to~\cref{eq: monge_p}; we show in \cref{prop: ent_brenier} that the barycentric projection of the entropic optimal coupling is the gradient of the function which solves the dual problem to~\cref{eq:ent_ot}.
In addition to providing a connection to the classical theory of optimal transport, this observation provides a canonical extension $\hat T$ to out-of-sample points.
Moreover, since Sinkhorn's algorithm computes solutions to the dual of~\cref{eq:ent_ot}, this interpretation shows that computing $\hat T$ is no more costly than solving~\cref{eq:ent_ot}. Moreover, we propose a variant of our estimator that is \emph{adaptive} in the sense that the smoothness parameter need not be explicitly known to the practitioner.

We analyze $\hat T$ by employing a strategy pioneered by~\cite{pal2019difference} for understanding the structure of the optimal entropic coupling.
This technique compares the solution to~\cref{eq:ent_ot} to a coupling whose conditional laws are Gaussian, with mean and covariance characterized by the solution to~\cref{eq: monge_p}.
To leverage this comparison, we employ a duality principle in conjuction with an upper bound reminiscent of the short-time expansions of the value of~\cref{eq:ent_ot} developed in~\cite{conforti2021formula} and~\cite{chizat2020faster}.

\subsubsection*{Paper outline}
The paper is organized as follows: \cref{sec: background} reviews the relevant background on optimal transport theory and entropic regularization for the quadratic cost. We define our estimator and preview our main results in \cref{sec: motivation}. 
Our main statistical bounds appear in \cref{sec: one_samp} and \cref{sec: two_samp}. \Cref{sec: adaptive_estimation} contains a version of our estimator that is adaptive to smoothness.
A discussion of computational considerations and numerical experiments are provided in \cref{sec: exp}.  
\subsubsection*{Notation}
The support of a probability distribution is given by $\supp(\cdot)$. For a convex function $\varphi$, we denote its convex dual by $\varphi^*(y) = \sup_x \{x^\top y - \varphi(x)\}.$ The Kullback--Liebler divergence between two measures is denoted by $\KL \mu \nu = \int \log( \frac {\dd \mu} {\dd \nu} )\dd \mu.$ If $P$ possesses a density $p$ with respect to the Lebesgue measure, we denote its differential entropy by $\text{Ent}(P) = \int p(x) \log(p(x)) \dd x$. For a joint probability density $p(x,y)$, we denote the conditional density of $y$ given $x$ as $p^x(y)$. The square-root of the determinant of a matrix is $J(\cdot) := \sqrt{\det(\cdot)}$.
For $\alpha \geq 0$ and a closed set $\Omega$, we write $h \in \cC^{\alpha}(\Omega)$ if there exists an open set $U \supseteq \Omega$ and a function $g: U \to \RR$ such that $g|_{\Omega} = h$ and such that $g$ possesses $\floor{\alpha}$ continuous derivatives and whose $\floor{\alpha}$th derivative is $(\alpha - \floor{\alpha})$-H\"older smooth.
The total variation distance between two probability measures $\mu$ and $\nu$ is $\tv{\mu}{\nu} =\sup_{f : \|f\|_\infty \leq 1}\int f(\rd \mu - \rd \nu)$.
For $a \in \R^d$ and $r > 0$, we write $B_r(a)$ for the Euclidean ball of radius $r$ centered at $a$.
A constant is a quantity whose value may depend on the smoothness parameters appearing in assumptions \textbf{(A1)} to \textbf{(A3)}, the set $\Omega$, and the dimension, but on no other quantities.
We denote the maximum and minimum of $a$ and $b$ by $a \vee b$ and $a \wedge b$, respectively.
We use the symbols $c$ and $C$ to denote positive constants whose value may change from line to line, and write $a\lesssim b$ and $a \asymp b$ if there exists constants $c, C > 0$ such that $a \leq Cb$ and $cb \leq a \leq Cb$, respectively.

Our proofs based on empirical process theory will consider suprema over uncountable collections of random variables; however, since all the processes in question are separable, these suprema are still measurable~\citep[Section 2.1]{GinNic16}.
\section{Background on optimal transport theory}\label{sec: background}
\subsection{Optimal transport under the quadratic cost}\label{sec: background_ot}
Let $\cP(\Omega)$ be the space of (Borel) probability measures with support contained in $\Omega$, and~$\cP_{ac}(\Omega)$ be those with densities with respect to Lebesgue measure. We first present Brenier's Theorem, which guarantees the existence of an optimal map between two distributions when the first measure is absolutely continuous.
\begin{theorem}[Brenier's Theorem]\label{thm: brenier_thm}\citep{Bre91}
Let $P \in \cP_{ac}(\Omega)$ and $Q \in \cP(\Omega)$. Then
\begin{enumerate}
\item there exists a solution $T_0$ to~\cref{eq: monge_p}, with $T_0 = \nabla \varphi_0$, for a convex function $\varphi_0$ solving
\begin{align}\label{eq: semidual}
\inf_{\varphi \in L^1(P)} \int \varphi \dd P + \int \varphi^* \dd Q,
\end{align} 
where $\varphi^*$ is the convex conjugate to $\varphi$.
\item If in addition $Q \in \cP_{ac}(\Omega)$, then $\nabla \varphi^*_0$ is the optimal transport map from $Q$ to $P$.
\end{enumerate}
\end{theorem}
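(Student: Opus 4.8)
The plan is to pass through the Kantorovich relaxation of~\cref{eq: monge_p}, solve the associated dual problem, and then use convex analysis to show that the optimal coupling is concentrated on the graph of a gradient. First I would replace~\cref{eq: monge_p} by the Kantorovich problem $\inf_{\pi \in \Pi(P,Q)} \iint \tfrac 12\|x-y\|^2\dd\pi(x,y)$; assuming finite second moments (automatic when $\Omega$ is bounded), a minimizer $\pi^\star$ exists by tightness of $\Pi(P,Q)$ together with lower semicontinuity of the cost. Expanding $\tfrac12\|x-y\|^2 = \tfrac12\|x\|^2 - \langle x,y\rangle + \tfrac12\|y\|^2$ and substituting $\varphi(x) = \tfrac12\|x\|^2 - \phi(x)$ in the Kantorovich dual $\sup\{\int\phi\dd P + \int\psi\dd Q : \phi(x) + \psi(y) \le \tfrac12\|x-y\|^2\}$ rewrites it, after replacing $\psi$ by the optimal choice $\tfrac12\|\cdot\|^2 - \varphi^*$ and $\varphi$ by its convex biconjugate, as the minimization of $\int\varphi\dd P + \int\varphi^*\dd Q$ over convex $\varphi$: precisely~\cref{eq: semidual}.

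Next I would establish that the infimum in~\cref{eq: semidual} is attained and that there is no duality gap. Restricting attention to convex potentials with $\varphi = \varphi^{**}$, a minimizing sequence can be normalized (subtracting constants) and shown to be uniformly Lipschitz on a neighborhood of $\supp(P)$, so that Arzel\`a--Ascoli delivers a subsequential limit $\varphi_0$ that is optimal; strong duality---equality of the value of~\cref{eq: semidual} with the Kantorovich minimum---follows from the same compactness input, or from a minimax argument.

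With $\varphi_0$ in hand, complementary slackness forces any optimal $\pi^\star$ to be supported on $\{(x,y) : \varphi_0(x) + \varphi_0^*(y) = \langle x,y\rangle\}$, which by the Fenchel--Young equality equals $\{(x,y): y \in \partial\varphi_0(x)\}$. Since $\varphi_0$ is convex and finite on an open set it is differentiable off a Lebesgue-null set, hence $P$-almost everywhere because $P \in \cP_{\text{ac}}(\Omega)$; therefore $\partial\varphi_0(x) = \{\nabla\varphi_0(x)\}$ for $P$-a.e.\ $x$, so $\pi^\star = (\mathrm{id}, \nabla\varphi_0)_\sharp P$. In particular $T_0 := \nabla\varphi_0$ satisfies $(T_0)_\sharp P = Q$, and since its Monge cost equals the Kantorovich value, which is a lower bound for the cost of every admissible map, $T_0$ solves~\cref{eq: monge_p}. ($P$-a.e.\ uniqueness follows since the half-sum of two optimal plans is again optimal, hence also supported on a graph.) For the second assertion I would run the same argument with the roles of $P$ and $Q$ swapped: the cost is symmetric and the convex potential for the reversed problem may be taken to be $\varphi_0^*$ (using $(\varphi_0^*)^* = \varphi_0$), so when $Q \in \cP_{\text{ac}}(\Omega)$ the preceding reasoning yields that $\nabla\varphi_0^*$ transports $Q$ to $P$ optimally, and one checks along the way that $\nabla\varphi_0^* \circ \nabla\varphi_0 = \mathrm{id}$ $P$-a.e.

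The main obstacle is twofold: producing an honest minimizer of~\cref{eq: semidual} with enough regularity (the Lipschitz control underlying the Arzel\`a--Ascoli step), together with the absence of a duality gap; and the measure-theoretic core of the last step, namely that a.e.\ differentiability of a convex function suffices to collapse the optimal plan onto the graph of $\nabla\varphi_0$. The remaining manipulations are routine convex analysis and bookkeeping.
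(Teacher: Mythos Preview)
The paper does not supply a proof of this statement: Brenier's theorem is quoted as background and attributed to \citep{Bre91}, with no argument given. There is therefore nothing in the paper to compare your proposal against.

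That said, your sketch is the standard route to Brenier's theorem (Kantorovich relaxation, rewriting the dual as~\cref{eq: semidual} via the substitution $\varphi = \tfrac12\|\cdot\|^2 - \phi$, complementary slackness forcing support on $\partial\varphi_0$, and Lebesgue-a.e.\ differentiability of convex functions collapsing the plan onto a graph). The outline is sound; the points you flag as obstacles---existence of a minimizer of~\cref{eq: semidual} with the needed compactness, and the no-duality-gap statement---are indeed where the real work lies, and are handled carefully in, e.g., \citet{Vil08} or \citet{San15}. One small caveat: as stated in the paper, $\Omega$ is merely a subset of $\RR^d$ (only later assumed compact), so your parenthetical ``automatic when $\Omega$ is bounded'' should be replaced by the explicit finite-second-moment hypothesis if you want the argument to match the generality of the statement.
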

If $P$ does not have a density, then~\cref{eq: monge_p} may not have a solution, but this problem can be remedied by passing to a convex relaxation of \cref{eq: monge_p} due to \cite{Kan42}, which leads to the definition of the 2-Wasserstein distance:
\begin{align}\label{eq: kant_p}
\frac{1}{2}W_2^2(P,Q) := \min_{\pi \in \Pi(P,Q)} \int \frac{1}{2}\| x - y\|_2^2 \dd \pi(x,y),
\end{align}
where 
$$ \Pi(P,Q) := \{ \pi \in \cP(\Omega \times \Omega) \ |  \ \pi(A\times\Omega) = P(A), \pi(\Omega\times A) = Q(A) \}$$
is the set of \textit{couplings} between $P$ and $Q$. Unlike \cref{eq: monge_p}, \cref{eq: kant_p} always admits a minimizer when $P$ and $Q$ have finite second moments. We call such a minimizer an \textit{optimal plan}, denoted $\pi_0$.

\Cref{eq: kant_p} also possesses a dual formulation \citep[see][]{Vil08,San15}:
\begin{align}\label{eq: ot_dual}
\frac{1}{2}W^2_2(P,Q) = \sup_{(f,g)\in\Phi}\int f \dd P + \int g \dd Q,
\end{align}
where 
$$ \Phi  = \left\{(f,g) \in L^1(P)\times L^1(Q) \ | \ f(x) + g(y) \leq \frac{1}{2}\|x - y\|_2^2 \ \text{for all } x,y \in \Omega\right\}. $$
Once again, if $P$ and $Q$ have finite second moments, then the supremum in \cref{eq: ot_dual} is always achieved by a pair $(f_0,g_0) \in \Phi$, called \textit{optimal potentials}.
\begin{remark}
	It is straightforward to see that \cref{eq: ot_dual} and \cref{eq: semidual} are in fact explicitly linked: if $(f_0,g_0)$ are solutions to \cref{eq: ot_dual}, then  $\varphi_0 = \frac{1}{2}\|\cdot\|_2^2 - f_0$ solves \cref{eq: semidual} and, if $P \in \cP_{ac}$, $T_0 = \text{Id} - \nabla f_0$ solves~\cref{eq: monge_p}.
\end{remark}

\subsection{Entropic optimal transport under the quadratic cost}\label{sec: background_eot}
\textit{Entropic optimal transport} is defined by adding an entropic regularization term to \cref{eq: kant_p} \citep{Cut13}. Letting $P,Q \in \cP(\Omega)$ and a regularization parameter $\eps > 0$, the entropically regularized problem is
\begin{align}\label{eq: primal_eot}
S_\eps(P,Q) &:= \inf_{\pi \in \Pi(P,Q)} \iint \frac{1}{2}\|x-y\|_2^2 \dd \pi(x,y) + \eps \KL{\pi}{P \otimes Q}\,, \\
\intertext{which, when $P$ and $Q$ have densities $p$ and $q$ respectively, can also be written }
S_\eps(P,Q) &= \inf_{\pi \in \Pi(P,Q)} \iint \frac{1}{2}\|x-y\|_2^2 \dd \pi(x,y) + \eps \iint \log(\pi)\dd\pi - \eps \iint \log(p(x)q(y)) \dd \pi(x,y) \nonumber \\
&= \inf_{\pi \in \Pi(P,Q)} \iint \frac{1}{2}\|x-y\|_2^2 \dd\pi(x,y) + \eps \iint \log(\pi)\dd\pi - \eps(\Ent{P} + \Ent{Q})\,.\nonumber
\end{align}
This problem also admits a dual~\citep[see][]{genevay2019entropy}, which is a relaxed version of \cref{eq: ot_dual}:
\begin{align}\label{eq: dual_eot}
S_\ep(P, Q) = \sup_{\substack{f \in L^1(P)\\g \in L^1(Q)}} \int f \dd P + \int g \dd Q - \eps \iint e^{(f(x) + g(y) - \frac{1}{2}\|x-y\|^2)/\eps}\dd P(x)\dd Q(y) + \eps\,.
\end{align}
Both \cref{eq: primal_eot} and \cref{eq: dual_eot} possess solutions if $P$ and $Q$ have finite second moments; moreover, if we denote by $\pi_\eps$ the solution to \cref{eq: primal_eot}, which we call the \emph{optimal entropic plan}, and $(\feps, \geps)$ the solution to \cref{eq: dual_eot}, which we call the \textit{optimal entropic potentials}, then we obtain the optimality relation~\citep{Csi75}:
$$ \dd \pi_\eps(x,y) := \tilde{\pi}_\eps(x,y) \dd P(x) \dd Q(y) := \exp((f_\eps(x) + g_\eps(y) - \tfrac{1}{2}\|x-y\|^2)/\eps) \dd P(x)\dd Q(y)\,. $$
A consequence of this relation is that we may choose optimal entropic potentials satisfying
\begin{equation}\label{dual_opt}
\begin{split}
\int e^{\frac 1 \eps(f_\eps(x) + g_\eps(y) - \frac{1}{2}\|x-y\|^2)} \dd P(x) & =1 \quad \forall y \in \RR^d \\
\int e^{\frac 1 \eps(f_\eps(x) + g_\eps(y) - \frac{1}{2}\|x-y\|^2)} \dd Q(y) & =1 \quad \forall x \in \RR^d\,.
\end{split}
\end{equation}
We will therefore always assume that \cref{dual_opt} holds.
Conversely, if a pair of functions $(f_\eps, g_\eps)$ satisfies the duality conditions \cref{dual_opt} $P \otimes Q$ almost everywhere, then $\exp((f_\eps(x) + g_\eps(y) - \frac{1}{2}\|x-y\|^2)/\eps)$ is the $P \otimes Q$ density of the optimal entropic plan.

Our proofs rely on a modified version of the duality relation given in \cref{eq: dual_eot}, in which the supremum is taken over a larger set of functions.
Though it is a straightforward consequence of Fenchel's inequality, we have not encountered this statement explicitly in the literature, so we highlight it here.
\begin{proposition}\label{prop:new_dual}
Assume $P$ and $Q$ possess finite second moments, and let $\pi_\ep$ be the optimal entropic plan for $P$ and $Q$.
Then
\begin{equation}\label{eq:new_dual}
S_\ep(P, Q) = \sup_{\eta \in L^1(\pi_\ep)} \int \eta \dd \pi_\ep - \ep \iint e^{(\eta(x, y) - \frac 12 \|x - y\|^2)/\ep} \dd P(x) \dd Q(y)  + \ep\,.
\end{equation}
\end{proposition}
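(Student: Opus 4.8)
The plan is to show the two inequalities separately, using Fenchel's inequality for the upper bound and the explicit optimal plan for the lower bound. For the upper bound, fix any $\eta \in L^1(\pi_\ep)$ and write $\eta(x,y) - \tfrac12\|x-y\|^2 = \ep \log h(x,y)$ where we think of $h$ as a candidate density perturbation. Using the pointwise inequality $ab \le a\log a - a + e^b$ (Fenchel's inequality for $t \log t$), applied with $a = \tilde\pi_\ep(x,y)$ the $P\otimes Q$-density of $\pi_\ep$ and $b = (\eta(x,y) - \tfrac12\|x-y\|^2)/\ep$, we get
\begin{equation*}
\tilde\pi_\ep \cdot \frac{\eta - \tfrac12\|x-y\|^2}{\ep} \le \tilde\pi_\ep \log \tilde\pi_\ep - \tilde\pi_\ep + e^{(\eta - \tfrac12\|x-y\|^2)/\ep}\,.
\end{equation*}
Integrating against $\dd P(x)\,\dd Q(y)$, the left side becomes $\tfrac1\ep\big(\int \eta\,\dd\pi_\ep - \iint \tfrac12\|x-y\|^2\,\dd\pi_\ep\big)$, the first term on the right is $\KL{\pi_\ep}{P\otimes Q}$, the second integrates to $1$, and the third is the exponential integral in \cref{eq:new_dual}. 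Rearranging and using that $\pi_\ep$ attains $S_\ep(P,Q) = \iint \tfrac12\|x-y\|^2\dd\pi_\ep + \ep\KL{\pi_\ep}{P\otimes Q}$ yields $S_\ep(P,Q) \ge \int \eta\,\dd\pi_\ep - \ep \iint e^{(\eta - \tfrac12\|x-y\|^2)/\ep}\dd P\,\dd Q + \ep$ for every $\eta$, hence $S_\ep(P,Q)$ dominates the supremum.

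For the reverse inequality, I would simply exhibit a near-maximizer. Take $\eta(x,y) = f_\ep(x) + g_\ep(y)$, the sum of the optimal entropic potentials satisfying \cref{dual_opt}; this $\eta$ lies in $L^1(\pi_\ep)$ since $\pi_\ep$ has finite second moments and the potentials are integrable. With this choice the exponential integral equals $\iint e^{(f_\ep(x)+g_\ep(y) - \tfrac12\|x-y\|^2)/\ep}\dd P(x)\dd Q(y) = 1$ by \cref{dual_opt}, and $\int \eta\,\dd\pi_\ep = \int f_\ep\,\dd P + \int g_\ep\,\dd Q$ since the marginals of $\pi_\ep$ are $P$ and $Q$. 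Thus the objective in \cref{eq:new_dual} evaluates to $\int f_\ep\,\dd P + \int g_\ep\,\dd Q - \ep + \ep = \int f_\ep\,\dd P + \int g_\ep\,\dd Q$, which by \cref{eq: dual_eot} (again using that the exponential term there is $1$) equals $S_\ep(P,Q)$. Hence the supremum in \cref{eq:new_dual} is at least $S_\ep(P,Q)$, and combined with the upper bound we get equality.

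The two potential obstacles are both minor bookkeeping issues rather than conceptual ones. First, one must check integrability: that $f_\ep \in L^1(P)$, $g_\ep \in L^1(Q)$, and $\|x-y\|^2 \in L^1(\pi_\ep)$, so that all the integrals above are well-defined and the rearrangements are legitimate; this follows from the standing assumption that $P$ and $Q$ have finite second moments together with known integrability of optimal entropic potentials under the quadratic cost. Second, in the Fenchel step one should make sure the inequality $a b \le a \log a - a + e^b$ is applied only where $a = \tilde\pi_\ep > 0$ and handle the set $\{\tilde\pi_\ep = 0\}$ separately (there the left side is $0$ and the right side is $e^b \ge 0$), and one must confirm the left-hand integral $\iint \tilde\pi_\ep (\eta - \tfrac12\|x-y\|^2)\,\dd P\,\dd Q$ splits into its two pieces without an $\infty - \infty$ ambiguity — again a consequence of the integrability facts. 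I expect the "hard part," such as it is, to be stating these integrability preliminaries cleanly; the algebra of the two bounds is immediate once they are in place.
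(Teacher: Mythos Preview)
Your proposal is correct and essentially identical to the paper's proof: both use the Fenchel/Young inequality $ab \le a\log a - a + e^b$ with $a = \tilde\pi_\ep$ and $b = (\eta - \tfrac12\|x-y\|^2)/\ep$, integrate against $P\otimes Q$, and identify $\int \tilde\pi_\ep \log \tilde\pi_\ep\,\dd(P\otimes Q)$ with $\KL{\pi_\ep}{P\otimes Q}$ to get the upper bound, while the lower bound comes from the separable choice $\eta(x,y) = f_\ep(x) + g_\ep(y)$ and \cref{eq: dual_eot}. Your added remarks on integrability and the $\{\tilde\pi_\ep = 0\}$ case are more careful than the paper's treatment but do not change the argument.
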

Comparing this proposition with \cref{eq: dual_eot}, we see that we can always take $\eta(x, y) = f(x) + g(y)$, in which case \cref{eq:new_dual} reduces to \cref{eq: dual_eot}.
The novelty in \cref{prop:new_dual} therefore arises in showing that the quantity on the right side of \cref{eq:new_dual} is still bounded above by $S_\ep(P, Q)$.
We give the short proof of \cref{prop:new_dual} in \cref{sec: omitted_proofs}.

Several recent works have bridged the regularized and unregularized optimal transport regimes, with particular interest in the setting where $\eps \to 0$. Convergence of $\pi_\eps$ to $\pi_0$ was studied by \cite{carlier2017convergence} and \cite{Leo12}, and recent work has quantified the convergence of the plans \citep{ghosal2021stability,bernton2021entropic,klatt2020empirical,hundrieser2021limit} and the potentials \citep{nutz2021entropic, altschuler2021asymptotics,rigollet2022sample,masud2021multivariate} in certain settings. Convergence of $S_\eps(P,Q)$ to $\frac 12 W_2^2(P,Q)$ has attracted significant research interest: under mild conditions, \cite{pal2019difference} proves a first-order convergence result for general convex costs (replacing $\frac 12 \|\cdot\|^2$), and a second order expansion was subsequently obtained by \cite{chizat2020faster} and \cite{conforti2021formula}. We rely on the following bound which we provide a short proof of in \Cref{sec: sec-order-est}.
\begin{theorem}\label{thm: faster_thm} Suppose $P$ and $Q$ have bounded densities with compact support. Then
\begin{align}\label{eq: tamanini_main}
S_\eps(P,Q) - \frac{1}{2}W^2_2(P,Q)  + \eps \log((2 \pi \ep)^{d/2}) \leq - \frac{\eps}{2}\pran{\Ent(P) + \Ent(Q)} + \frac{\eps^2}{8}I_0(P,Q)\,,
\end{align}
where \!$I_0(P,Q)$ is the integrated Fisher information along the Wasserstein geodesic between the source measure $P$ and target measure $Q$.
\end{theorem}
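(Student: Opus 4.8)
The plan is to obtain \cref{eq: tamanini_main} as the non-asymptotic ``upper-bound half'' of the second-order expansion of the entropic cost established in \citep{chizat2020faster} (itself refining \citep{conforti2021formula, pal2019difference}); the engine behind that bound is a Benamou--Brenier competitor for a dynamic reformulation of $S_\eps$, which I now sketch. Starting from the density representation of $S_\eps$ in \cref{eq: primal_eot} and the equivalence between the static Schr\"odinger problem and its stochastic-control/fluid-dynamic formulation (see \citep{conforti2021formula} and the references therein), one has, after isolating the explicit $\eps$-dependent constant,
\begin{align*}
&S_\eps(P,Q) + \eps\log\!\big((2\pi\eps)^{d/2}\big) + \tfrac{\eps}{2}\big(\Ent(P)+\Ent(Q)\big) \\
&\qquad= \inf_{(\mu_t, v_t)} \int_0^1\!\!\int \Big( \tfrac12\|v_t\|^2 + \tfrac{\eps^2}{8}\|\nabla\log\mu_t\|^2 \Big)\dd\mu_t(x)\dd t,
\end{align*}
where the infimum runs over absolutely continuous curves $(\mu_t)_{t \in [0,1]}$ in $W_2$ with $\mu_0 = P$, $\mu_1 = Q$, and velocity fields $v_t$ solving the continuity equation $\partial_t \mu_t + \nabla\!\cdot(\mu_t v_t) = 0$.

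To bound the right-hand side it suffices to exhibit a single admissible competitor, and I would take the constant-speed $W_2$-geodesic $(\rho_t)_{t\in[0,1]}$ from $P$ to $Q$; this exists and consists of absolutely continuous measures because $P$ has a bounded, compactly supported density. By the Benamou--Brenier theorem its kinetic term equals $\int_0^1\!\int \tfrac12\|v_t\|^2 \dd\rho_t\dd t = \tfrac12 W_2^2(P,Q)$, and, by the very definition of $I_0$, its Fisher-information term equals $\tfrac{\eps^2}{8}\int_0^1 I(\rho_t)\dd t = \tfrac{\eps^2}{8} I_0(P,Q)$. Substituting this competitor into the display above and rearranging produces exactly \cref{eq: tamanini_main}.

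The hard part is making the first step rigorous: one must justify the dynamic identity under these (minimal) regularity hypotheses and check that the geodesic competitor is admissible with a finite Fisher-information Lagrangian equal to the claimed value. If $I_0(P,Q) = +\infty$ -- which can happen even here, e.g.\ when $P$ or $Q$ has a density with a jump -- then \cref{eq: tamanini_main} holds trivially, so one may assume $I_0 < \infty$; even then the argument needs enough control on the displacement interpolation, obtained either from Caffarelli-type regularity or, more robustly, by mollifying $P$ and $Q$, applying the inequality to the smoothed measures, and passing to the limit using lower semicontinuity of both sides together with the continuity of $W_2^2$, $\Ent(\cdot)$, and $I_0$ under the mollification. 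Since precisely this bookkeeping is carried out in \citep{chizat2020faster, conforti2021formula}, the most economical route here is to invoke their expansion directly, convert their regularization and Kullback--Leibler normalizations to the ones used in \cref{eq: primal_eot} (this conversion is what produces the $\eps\log((2\pi\eps)^{d/2})$ and $\tfrac{\eps}{2}(\Ent(P)+\Ent(Q))$ terms), and then keep only the one-sided inequality, discarding the matching lower bound and the $o(\eps^2)$ remainder, which play no role in our application.
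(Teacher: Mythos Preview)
Your proposal is correct and follows essentially the same route as the paper: state the dynamic (Benamou--Brenier-type) formulation of $S_\eps$ from \citep{chizat2020faster,conforti2021formula}, plug in the $W_2$-geodesic $(\rho_0,v_0)$ as a competitor, and identify the resulting kinetic and Fisher-information terms with $\tfrac12 W_2^2(P,Q)$ and $\tfrac{\eps^2}{8}I_0(P,Q)$. The paper's proof is terser---it simply cites the dynamic identity and substitutes the geodesic minimizers---whereas you additionally sketch the regularity and mollification issues, but the underlying argument is the same.
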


\section{Estimator and main results}\label{sec: motivation}
Given the optimal entropic plan $\pi_\eps$ between $P$ and $Q$, we define its barycentric projection to be
\begin{equation}
 T_\eps(x):= \int y \dd \pi_{\eps}^x(y) = \E_{\pi_{\eps}}[Y \mid X = x]\,.
\end{equation}
\textit{A priori}, this map is only defined $P$-almost everywhere, making it unsuitable for evaluation outside the support of $P$.
In particular, since we will study the barycentric projection obtained from the optimal entropic plan between empirical measures, this definition does not extend outside the sample points.
However, the duality relation~\cref{dual_opt} implies that we may define a version of the conditional density of $Y$ given $X = x$ for \emph{all} $x \in \RR^d$ by
$$\dd \pi_\eps^x(y) =  e^{\frac 1 \eps(f_\eps(x) + g_\eps(y) - \frac{1}{2}\|x-y\|^2)} \dd Q(y) = \frac{ e^{\frac 1 \eps(g_\eps(y) - \frac{1}{2}\|x-y\|^2)} \dd Q(y)}{\int e^{ \frac 1 \eps( g_\eps(y') - \frac{1}{2}\|x-y'\|^2)} \dd Q(y')}\,,$$
where $(f_\eps, g_\eps)$ are the optimal entropic potentials.
This furnishes an extension of $ T_\eps$ to all of $\RR^d$ by
\begin{equation}
T_\eps(x) \defeq \frac{\int y e^{\frac 1 \eps(g_\eps(y) - \frac{1}{2}\|x-y\|^2)} \dd Q(y)}{\int e^{\frac 1 \eps( g_\eps(y) - \frac{1}{2}\|x-y\|^2)} \dd Q(y)}\,.
\end{equation}
We call $T_\eps$ the \emph{entropic map} between $P$ and $Q$, though we stress that $(T_\eps)_\sharp P \neq Q$ in general.
This natural definition is motivated by the following observation, which shows that the entropic map can also be defined as the map obtained by replacing the optimal potential in Brenier's theorem by its entropic counterpart.

\begin{proposition}\label{prop: ent_brenier}
	Let $(f_\eps, g_\eps)$ be optimal entropic potentials satisfying \cref{dual_opt}, and let $T_\eps$ be the entropic map. Then $T_\eps = Id - \nabla f_\eps$.
\end{proposition}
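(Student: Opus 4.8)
The plan is to extract the explicit form of the entropic potential $f_\eps$ from the marginal constraint \cref{dual_opt} and then differentiate under the integral sign. First, I would rewrite the second equation in \cref{dual_opt} by pulling the $x$-dependent factor $e^{f_\eps(x)/\eps}$ out of the integral, which gives
\begin{equation*}
e^{-f_\eps(x)/\eps} = \int e^{\frac 1\eps(g_\eps(y) - \frac 12 \|x - y\|^2)} \dd Q(y) \qquad \text{for all } x \in \RR^d,
\end{equation*}
equivalently $f_\eps(x) = -\eps \log \int e^{\frac 1\eps(g_\eps(y) - \frac 12 \|x-y\|^2)} \dd Q(y)$. Since $Q$ has compact support (more generally, finite second moments suffice, as the Gaussian-type kernel then decays fast enough), the integrand on the right is a smooth, strictly positive function of $x$ whose first $x$-derivatives are dominated locally uniformly; hence $f_\eps$ admits a differentiable version and one may differentiate under the integral sign.

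Next I would differentiate both sides of the displayed identity in $x$. The right-hand side produces
\begin{equation*}
\nabla_x e^{-f_\eps(x)/\eps} = -\tfrac 1\eps \int (x - y)\, e^{\frac 1\eps(g_\eps(y) - \frac 12\|x-y\|^2)} \dd Q(y),
\end{equation*}
while the left-hand side equals $-\tfrac 1\eps \nabla f_\eps(x)\, e^{-f_\eps(x)/\eps}$. Dividing through by $-\tfrac 1\eps e^{-f_\eps(x)/\eps} = -\tfrac 1\eps \int e^{\frac 1\eps(g_\eps(y) - \frac 12\|x-y\|^2)} \dd Q(y) < 0$ and recalling that, by \cref{dual_opt}, $\dd\pi_\eps^x(y)$ is the probability measure with $Q$-density proportional to $e^{\frac 1\eps(g_\eps(y) - \frac 12\|x-y\|^2)}$, we obtain
\begin{equation*}
\nabla f_\eps(x) = \frac{\int (x-y)\, e^{\frac 1\eps(g_\eps(y) - \frac 12\|x-y\|^2)} \dd Q(y)}{\int e^{\frac 1\eps(g_\eps(y) - \frac 12\|x-y\|^2)} \dd Q(y)} = x - \int y \dd \pi_\eps^x(y) = x - T_\eps(x),
\end{equation*}
which rearranges to $T_\eps = (Id - \nabla f_\eps)$, as claimed. (This is just the familiar identity $\nabla_x(-\eps \log Z_\eps(x)) = x - \E_{\pi_\eps^x}[Y]$ for the partition function $Z_\eps(x) = \int e^{\frac 1\eps(g_\eps(y) - \frac 12\|x-y\|^2)}\dd Q(y)$ of a Gibbs-type conditional law.)

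The only genuine subtlety is the justification for differentiating $f_\eps$ and for exchanging differentiation with the integral; this is where the support/moment assumptions on $Q$ enter, and it is routine via dominated convergence once one bounds $\|x-y\|\, e^{-\|x-y\|^2/(2\eps)}$ uniformly over $y$ and $x$ in a compact neighborhood. Everything else is bookkeeping with the duality relation \cref{dual_opt} and the definition of the entropic map $T_\eps$.
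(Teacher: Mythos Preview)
Your proof is correct and follows essentially the same route as the paper: both extract the identity $f_\eps(x) = -\eps \log \int e^{(g_\eps(y) - \frac 12\|x-y\|^2)/\eps}\dd Q(y)$ from the second optimality condition in \cref{dual_opt} and differentiate in $x$ to obtain $\nabla f_\eps(x) = x - T_\eps(x)$. The only difference is that you include an explicit (and welcome) remark on the dominated-convergence justification for differentiating under the integral, which the paper omits.
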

\begin{proof}
	\cref{dual_opt} implies 
$$ f_\eps(x) = -\eps\log \int e^{(g_\eps(y) - \frac{1}{2}\|x-y\|^2)/\eps} \dd Q(y)\,.$$
Taking the gradient of this expression yields
\begin{align*}
\nabla f_\eps(x) &= -\eps \frac{\int (-(x-y)/\eps)e^{(g_\eps(y) - \frac{1}{2}\|x-y\|^2)/\eps} \dd Q(y)}{\int e^{(g_\eps(y) - \frac{1}{2}\|x-y\|^2)/\eps} \dd Q(y)} \\
&= x - \frac{\int ye^{(g_\eps(y) - \frac{1}{2}\|x-y\|^2)/\eps} \dd Q(y)}{\int e^{(g_\eps(y) - \frac{1}{2}\|x-y\|^2)/\eps} \dd Q(y)} = x - T_\eps(x)\,.
\end{align*}
\end{proof}

We write $P_n = \frac 1n \sum_{i=1}^n \delta_{X_i}$ and $Q_n = \frac 1n \sum_{i=1}^n \delta_{Y_i}$ for the empirical distributions corresponding to the samples from $P$ and $Q$, respectively.
Our proposed estimator is $T_{\eps, (n, n)}$, the entropic map between $P_n$ and $Q_n$, which can be written explicitly as
\begin{equation}
T_{\eps, (n, n)}(x) = \frac{\frac 1n \sum_{i=1}^n Y_i e^{\frac 1 \eps(g_{\eps, (n,n)}(Y_i) - \frac 12 \|x -Y_i \|^2)}}{\frac 1n \sum_{i=1}^n e^{\frac 1 \eps(g_{\eps, (n,n)}(Y_i) - \frac 12 \|x -Y_i \|^2)}}\,,
\end{equation}
where $g_{\eps, (n, n)}$ is the optimal entropic potential corresponding to $Q_n$ in the optimal entropic plan between $P_n$ and $Q_n$, which can be obtained as part of the output of Sinkhorn's algorithm~\citep[see][]{PeyCut19}.
In other words, once the optimal entropic potential is found, the map $T_{\eps, (n, n)}(x)$ can therefore be evaluated in linear time. 
We discuss these computational aspects thoroughly in Section \ref{sec: exp}.
As in standard nonparametric estimation~\citep{Tsy09}, the optimal choice of $\eps$ will be dictated by the smoothness of the target function.

\begin{remark}
We briefly take a moment to discuss the applicability of our estimator in a wider statistical context. A body of work \citep[e.g.,][]{chernozhukov2017monge,hallin2021distribution} studies the estimation of multivariate ranks and quantiles through \emph{inverse} optimal transport maps. For this purpose, it is important that estimators of transport maps be invertible. We remark that the entropic map as defined above has this property since it is strongly monotone, in the sense that $(T_\eps(x) - T_\eps(y))^\top(x-y) > 0$ \citep[see][Proposition 10]{rigollet2022sample}. However, our procedure also gives rise to an even simpler estimator for the inverse transport map, namely the map $T_{\eps}^{\text{inv}} \defeq \text{id} - \nabla g_\eps$.
By interchanging the roles of $P$ and $Q$ in our assumptions, we can provide both computational and statistical guarantees for this map as well.
\end{remark}

To prove quantitative rates of convergence for $T_{\eps, (n, n)}$, we make the following regularity assumptions on $P$ and $Q$:
\begin{description}
\item \textbf{(A1)} $P, Q \in \cP_{ac}(\Omega)$ for a compact set $\Omega$, with densities satisfying $p(x), q(x) \leq M$ and $q(x) \geq m > 0$ for all $x \in \Omega$,
\item \textbf{(A2)} $\varphi_0 \in \cC^2(\Omega)$ and $\varphi_0^* \in \cC^{\alpha + 1}(\Omega)$ for $\alpha > 1$,
\item \textbf{(A3)}  $T_0 = \nabla \varphi_0$, with $\mu I \preceq \nabla^2 \varphi_0(x) \preceq L I$ for $\mu, L >0$ for all $x \in \Omega$,
\end{description}
In what follows, all constants may depend on the dimension, the set $\Omega$, $M$, $m$, $\mu$, $L$, 
and $\|\varphi_0^*\|_{\cC^{\alpha+1}}$. 

The above assumptions are qualitatively similar to those that have appeared in previous works on the estimation of optimal transport maps.
	
\textbf{(A1)} is a standard assumption in the statistical analysis of optimal transport map estimation. (It is present in, e.g., \cite{hutter2021minimax,manole2021plugin,deb2021rates,muzellec2021near}.)
All of these works require that $P$ and $Q$ be compactly supported.
Some of the tools we employ in this work extend beyond the compact support setting; for example, \cite{conforti2021formula} show that the expansion presented in \cref{thm: faster_thm} continues to hold for unbounded measures under suitable moment assumptions.
However, our proofs require strong \emph{a priori} bounds on the optimal transport map as well as on the entropic coupling for the random empirical measures $P_n$ and $Q_n$, which do not have clear analogues in the non-compact setting.

\textbf{(A3)} is also standard, and in prior work it has often been assumed implicitly as a consequence of a strengthened form of \textbf{(A1)}.
Caffarelli's regularity theory~\citep{Caffarelli1992} guarantees that if we assume that the set $\Omega$ in \textbf{(A1)} is \emph{convex} and that the density $p$ is also bounded below, then $T_0$ is continuous; if we further assume that $p, q \in C^\beta(\Omega)$ for any $\beta > 0$, then \textbf{(A3)} holds.
\textbf{(A3)} can therefore be viewed as being only slightly stronger than \textbf{(A1)}, so long as $\Omega$ is convex.
\textbf{(A3)} plays a crucial role in this and prior work, since, as was originally noticed by Ambrosio \citep[see][]{Gig11}, this assumption guarantees stability of the optimal transport map, as a function of the source and target measures.

Our most unusual assumption is \textbf{(A2)}.
Prior work analyzes estimators for $T_0$ under the assumption that $\varphi_0 \in \cC^{\alpha+1}(\Omega)$ for $\alpha > 1$, with rates that depend on $\alpha$.
For technical reasons, our proofs require a Laplace expansion in the ``target space'' corresponding to the dual Brenier potential $\varphi_0^*$.
Consequently, we instead assume that  $\varphi_0^* \in \cC^{\alpha+1}(\Omega)$, so that our rates depend on the smoothness of the \emph{inverse} map $T_0$.
We elaborate on this point further in the discussions surrounding  \Cref{lem: div_expansion}. 

Our main result is the following.
\begin{theorem}\label{thm:main}
	Under assumptions \textbf{(A1)} to \textbf{(A3)}, the entropic map $\hat T = T_{\eps, (n, n)}$ from $P_n$ to $Q_n$ with regularization parameter $\eps \asymp n^{-\frac{1}{d + \bar \alpha + 1 }}$ satisfies
	\begin{equation*}
	\E \|\hat T - T_0\|_{L^2(P)}^2 \lesssim (1 + I_0(P, Q)) n^{- \frac{(\bar \alpha + 1)}{2(d + \bar{\alpha} + 1)}} \log n\,,
	\end{equation*} 
	where $\bar \alpha = \alpha \wedge 3$.
\end{theorem}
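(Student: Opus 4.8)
The plan is to insert the \emph{population entropic map} $T_\eps$---the barycentric projection of the optimal entropic plan $\pi_\eps$ between $P$ and $Q$---and to bound, using $(a+b)^2 \le 2a^2 + 2b^2$,
\[
\E\|\hat T - T_0\|_{L^2(P)}^2 \;\le\; 2\,\E\|\hat T - T_\eps\|_{L^2(P)}^2 \;+\; 2\,\|T_\eps - T_0\|_{L^2(P)}^2 \,.
\]
The second (deterministic) term is a \emph{bias}: it quantifies the price of entropic regularization and should decay as a positive power of $\eps$. The first term is a \emph{stochastic} error and should blow up as a negative power of $\eps$ (the exponential kernel $e^{-\|x-y\|^2/2\eps}$ behaves like a bandwidth-$\sqrt\eps$ smoother). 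Schematically, the targets are
\[
\|T_\eps - T_0\|_{L^2(P)}^2 \lesssim \big(1 + I_0(P,Q)\big)\,\eps^{(\bar\alpha+1)/2}\,, \qquad
\E\|\hat T - T_\eps\|_{L^2(P)}^2 \lesssim \frac{\log n}{n}\,\eps^{-\big(d' + (\bar\alpha+1)/2\big)}\,,
\]
and choosing $\eps$ to equalize the two gives $\eps \asymp n^{-1/(d'+\bar\alpha+1)}$ and the stated rate, with $1+I_0$ inherited from the bias and $\log n$ from the stochastic term.

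For the bias I would follow the comparison strategy of \cite{pal2019difference}: compare $\pi_\eps$ with a reference coupling $\hat\pi_\eps$ having first marginal $P$ and conditional law $\hat\pi_\eps^x$ equal to a (renormalized) Gaussian $\mathcal N\!\big(T_0(x),\,\eps\,\nabla^2\varphi_0(x)\big)$, so that the barycentric projection of $\hat\pi_\eps$ is exactly $T_0$. Feeding $\eta(x,y) = \tfrac12\|x-y\|^2 + \eps\log\!\big(\rd\hat\pi_\eps/\rd(P\otimes Q)\big)(x,y)$ into the modified dual \cref{eq:new_dual} produces a dual value which, by \cref{prop:new_dual}, is at most $S_\eps(P,Q)$; a short computation identifies the gap as $\eps\,\KL{\pi_\eps}{\hat\pi_\eps}$, and since \cref{thm: faster_thm} controls $S_\eps(P,Q)$ from above while the dual value is bounded below by a matching Gaussian-integral computation (producing exactly the $\tfrac12 W_2^2$, $\eps\log((2\pi\eps)^{d/2})$ and $\tfrac\eps2(\Ent(P)+\Ent(Q))$ terms), we obtain a quantitative bound on $\KL{\pi_\eps}{\hat\pi_\eps}$. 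A Talagrand/weighted-Pinsker argument---using that $\hat\pi_\eps^x$ is Gaussian with covariance of order $\eps$---then yields $|T_\eps(x)-T_0(x)|^2 \le W_2^2(\pi_\eps^x,\hat\pi_\eps^x) \lesssim \eps\,\KL{\pi_\eps^x}{\hat\pi_\eps^x}$, and integrating in $x\sim P$ (chain rule for $\KL{\cdot}{\cdot}$, since the first marginals agree) gives the bias bound. The exponent is dictated by regularity: \cref{thm: faster_thm} is informative only when the relevant Fisher information is finite, and the Gaussian ansatz is justified by a Laplace expansion valid only to the order permitted by $\varphi_0^*\in\cC^{\alpha+1}$; when $\alpha\le 3$ (or when $I_0(P,Q)$ is not finite) one mollifies $P$ or $Q$ at a scale $\delta\asymp\eps^{\theta}$, trading an approximation error polynomial in $\delta$ against a Fisher information growing like a negative power of $\delta$, and the $\delta$-vs-$\eps$ balance produces the effective exponent $(\bar\alpha+1)/2$ and the saturation $\bar\alpha = \alpha\wedge 3$ (for $\alpha>3$ the second-order expansion extracts no further gain).

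For the stochastic term, \cref{prop: ent_brenier} gives $\hat T - T_\eps = \nabla f_\eps - \nabla f_{\eps,(n,n)}$, so this is a stability estimate for the entropic potentials under passing from $(P,Q)$ to $(P_n,Q_n)$. I would route it through a one-sample intermediate---the entropic map $T_{\eps,(n)}$ between $P$ and $Q_n$---and bound $\|\hat T - T_\eps\| \le \|T_{\eps,(n,n)} - T_{\eps,(n)}\| + \|T_{\eps,(n)} - T_\eps\|$. Each piece splits into (i) the fluctuation of the self-normalized weighted averages $\tfrac1n\sum_i e^{(g_\eps(Y_i)-\frac12\|x-Y_i\|^2)/\eps}$ and $\tfrac1n\sum_i Y_i e^{(g_\eps(Y_i)-\frac12\|x-Y_i\|^2)/\eps}$ about their $Q$-expectations---handled by a Bernstein or bounded-differences inequality, made uniform over $x\in\Omega$ by a covering argument---and (ii) the propagation of the sup-norm deviation of the empirical potential $g_{\eps,(n,n)}$ from $g_\eps$, which enters through a Lipschitz-in-$g$ bound for the entropic map and therefore costs powers of $\eps^{-1}$. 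The overall $\eps$-dependence is governed by the effective variance of the exponential weights---for a Gaussian-type kernel of bandwidth $\sqrt\eps$ against a density bounded above and below by \textbf{(A1)}, the relevant second moments scale like $\eps^{-d/2}$, with the even-integer rounding $d' = 2\ceil{d/2}$ appearing from the attendant Gaussian-moment computation.

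Combining the two displays and taking $\eps\asymp n^{-1/(d'+\bar\alpha+1)}$ proves the theorem. I expect the bias bound to be the principal obstacle: both the passage from the dual-value gap (via \cref{prop:new_dual} and \cref{thm: faster_thm}) to genuine $L^2(P)$-closeness of the barycentric projections, and the mollification bookkeeping required to keep every power of $\eps$ sharp under the limited $\cC^{\alpha+1}$ regularity of $\varphi_0^*$, are delicate; the stochastic term, although it requires care with the degeneracy of the kernel as $\eps\to0$ and with potential stability, follows a comparatively standard nonparametric template.
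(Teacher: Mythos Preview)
Your high-level plan is in the right spirit but differs from the paper in both the decomposition and the execution, and two of the details you rely on are incorrect.

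\textbf{Decomposition.} The paper does not pivot through the population map $T_\eps$; it pivots through the \emph{one-sample} map $T_{\eps,n}$ (the entropic map from $P$ to $Q_n$), proving
\[
\E\|T_{\eps,n}-T_0\|_{L^2(P)}^2 \lesssim \eps^{1-d'/2}\log(n)\,n^{-1/2}+\eps^{(\bar\alpha+1)/2}+\eps^2 I_0(P,Q)
\]
(\cref{thm:one_sample}), and separately $\E\|T_{\eps,(n,n)}-T_{\eps,n}\|_{L^2(P)}^2 \lesssim \eps^{-d'/2}\log(n)\,n^{-1/2}$ (\cref{thm:two_sample}); the final rate follows by balancing. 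Your pure-bias bound $\|T_\eps-T_0\|^2\lesssim \eps^2 I_0+\eps^{(\bar\alpha+1)/2}$ does hold and is in fact recorded in the paper as a corollary, so that half of your plan is sound. However, the paper's reference coupling is not the exact Gaussian but the Bregman density $q_\eps^x\propto e^{-D[y\,|\,x^*]/\eps}$; this choice makes the dual computation algebraically clean (the cost and $f_0+g_0$ cancel), and the $L^2$ bound is extracted not via Talagrand but by plugging the test function $h(x)=\tfrac{1}{2a}(T_{\eps,n}(x)-T_0(x))$ into an exponential-tilting inequality (\cref{prop: main}).

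\textbf{Where your plan goes wrong.} First, the saturation $\bar\alpha=\alpha\wedge 3$ has nothing to do with mollification---no mollification occurs anywhere in the paper. It comes from two hard ceilings: the Laplace expansion of $q_\eps^x$ gains one extra order because the third-degree term integrates to zero by symmetry (\cref{thm: laplace_thm}), but further orders are useless because the $\eps^2 I_0$ term from \cref{thm: faster_thm} already caps the bias at $O(\eps^2)$, i.e.\ $(\bar\alpha+1)/2\le 2$. Second, and more seriously, your stochastic-term strategy---Bernstein-type concentration of the self-normalized averages plus sup-norm stability of the empirical potential $g_{\eps,(n,n)}$---is \emph{not} what the paper does and is the weak link. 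Controlling $\|g_{\eps,(n,n)}-g_\eps\|_\infty$ with the right $\eps$-dependence is itself a hard problem, and your target rate $n^{-1}\eps^{-(d'+(\bar\alpha+1)/2)}$ has no natural provenance (the regularity parameter $\bar\alpha$ should not appear in a pure fluctuation bound; it looks reverse-engineered to balance). The paper instead reuses the \emph{same} modified duality (\cref{prop:new_dual}) for the stochastic pieces: plugging $\eta=\eps\chi+\tilde f_\eps+\tilde g_\eps$ with the empirical potentials reduces everything to the fluctuation $S_\eps(P,Q_n)-S_\eps(P,Q)$ and a single smooth-potential empirical process (\cref{prop: main,prop:two_sample_main}), which standard covering bounds handle at the rate $n^{-1/2}\eps^{-d'/2}$.
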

When $d \to \infty$ and $\alpha \to 1$, we formally obtain the rate $n^{-(1+o(1))/d}$.
By contrast, \cite{hutter2021minimax} show that, up to logarithmic factors, the rate $n^{-2(1+o(1))/d}$ is minimax optimal in this setting.
\Cref{thm:main} therefore falls short of the minimax rate by a factor of approximately 2 in the exponent; however, our numerical experiments in \cref{sec: exp} show that $\hat T$ is competitive with minimax-optimal estimators in practice.

To analyze $\tse$, we adopt a two-step approach.
We first consider the one-sample setting and show that the entropic map $T_{\eps, n}$ between $P$ and $Q_n$ is close to $T_0$ in expectation.
We prove the following.
\begin{theorem}\label{thm:one_sample}
	Under assumptions \textbf{(A1)} to \textbf{(A3)} there exists a constant $\eps_0 >0$ such that for $\eps \leq \eps_0$, the entropic map $T_{\eps, n}$ between $P$ and $Q_n$ satisfies
	\begin{equation}
	\E \|T_{\eps, n} - T_0\|_{L^2(P)}^2 \lesssim \eps^{1-d/2} \log(n) n^{-1/2} + \eps^{(\bar \alpha + 1)/2} + \eps^2 I_0(P, Q)\,,
	\end{equation}
with  $\bar \alpha = \alpha \wedge 3$. Choosing $\eps \asymp n^{-\frac{1}{d + \bar \alpha - 1}}$, we get the one-sample estimation rate
\begin{equation}
\E \| T_{\eps,n} - T_0 \|^2_{L^2(P)} \lesssim (1 + I_0(P,Q))n^{-\frac{\bar \alpha + 1}{2(d + \bar \alpha - 1)}}\,. 
\end{equation}
\end{theorem}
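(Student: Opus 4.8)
The plan is to combine three ingredients: the triangle inequality to split off a deterministic ``bias'' term from the statistical fluctuation due to $Q_n$; Pal's Gaussian-conditional comparison, implemented through the modified duality of \cref{prop:new_dual} together with the short-time upper bound of \cref{thm: faster_thm}, to bound the bias; and a uniform concentration argument to bound the fluctuation. Concretely, write $\|T_{\eps,n}-T_0\|_{L^2(P)}^2 \le 2\|T_{\eps,n}-T_\eps\|_{L^2(P)}^2 + 2\|T_\eps-T_0\|_{L^2(P)}^2$, where $T_\eps$ is the population entropic map between $P$ and $Q$, and treat the two pieces separately; balancing the resulting estimates by the stated choice of $\eps$ yields the rate.

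For the bias term $\|T_\eps-T_0\|_{L^2(P)}^2$, following Pal I would introduce the test function $\eta(x,y)=\tfrac12\|x-y\|^2+\eps\log(\widehat\pi^x(y)/q(y))$, where $\widehat\pi^x$ is (a truncation to $\Omega$ of) the Gaussian density $\mathcal N(T_0(x),\eps\nabla^2\varphi_0(x))$, and plug it into \cref{eq:new_dual} for $(P,Q)$; this is admissible because assumption \textbf{(A1)} ($q\ge m$) controls the log-ratio on $\Omega$, and because $\iint e^{(\eta(x,y)-\frac12\|x-y\|^2)/\eps}\,\dd P(x)\,\dd Q(y)=\iint\widehat\pi^x(y)\,\dd y\,\dd P(x)$ equals $1$ up to an error exponentially small in $\eps$ coming from the truncation. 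This produces a lower bound on $S_\eps(P,Q)$; subtracting the upper bound of \cref{thm: faster_thm} and cancelling the common leading-order terms ($\tfrac12W_2^2(P,Q)$, $\eps\log((2\pi\eps)^{d/2})$, and the two entropy terms, using that $T_0$ is the Brenier map so $\int\tfrac12\|x-T_0(x)\|^2\,\dd P=\tfrac12W_2^2(P,Q)$) leaves an inequality of the schematic form: [a nonnegative quadratic form in $T_\eps-T_0$, plus a discrepancy between the conditional covariance of $\pi_\eps$ and $\eps\nabla^2\varphi_0$] $\lesssim \eps^2 I_0(P,Q)+[\text{remainder}]$. The remainder comes from the higher-order terms in the Laplace expansion of $\int e^{(g_\eps(y)-\frac12\|x-y\|^2)/\eps}\,\dd Q(y)$ around $y=T_0(x)$; using $\varphi_0^*\in\cC^{\alpha+1}$ (equivalently $T_0^{-1}\in\cC^\alpha$) it is of order $\eps^{(\bar\alpha+1)/2}$, with the cap $\bar\alpha=\alpha\wedge 3$ forced by the fact that \cref{thm: faster_thm} is only a second-order expansion in $\eps$, so regularity beyond $\cC^4$ cannot help. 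Finally $\nabla^2\varphi_0\succeq\mu I$ (assumption \textbf{(A3)}) makes the quadratic form coercive, giving $\|T_\eps-T_0\|_{L^2(P)}^2\lesssim \eps^{(\bar\alpha+1)/2}+\eps^2 I_0(P,Q)$.

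For the fluctuation term $\E\|T_{\eps,n}-T_\eps\|_{L^2(P)}^2$, the closed form of the entropic map reduces matters to controlling, uniformly over $x\in\Omega$, the deviation of the empirical averages $\tfrac1n\sum_i(1,Y_i)\,e^{(g(Y_i)-\frac12\|x-Y_i\|^2)/\eps}$ from their population analogues, incorporating both the change $Q\mapsto Q_n$ and the change $g_\eps\mapsto g_{\eps,n}$ (the latter handled through the strong concavity of the Sinkhorn dual functional, whose modulus is controlled on $\Omega$ under \textbf{(A1)}). The denominators are bounded below by a constant by \cref{dual_opt}, so it remains to bound a supremum of a centered empirical process whose increments have variance $O(\eps^{1-d'/2})$ — this is where the factor $\eps^{1-d'/2}$ enters, through a Gaussian second-moment estimate $\int(\text{Gaussian density})^2\asymp(4\pi\eps)^{-d/2}$ combined with the $O(\eps)$ conditional variance of $\pi_\eps$, the appearance of $d'=2\ceil{d/2}$ reflecting that these Gaussian-integral bounds are phrased through an even-dimensional majorant — and whose envelope is polynomial in $\eps^{-1}$; a Bernstein inequality together with a union bound over a polynomial-size net of $\Omega$ then gives $\E\|T_{\eps,n}-T_\eps\|_{L^2(P)}^2\lesssim\eps^{1-d'/2}\log(n)\,n^{-1/2}$. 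Adding the two estimates yields the displayed three-term bound, and taking $\eps\asymp n^{-1/(d'+\bar\alpha-1)}$ balances $\eps^{1-d'/2}n^{-1/2}$ against $\eps^{(\bar\alpha+1)/2}$ (while $\eps^2I_0\lesssim(1+I_0)\eps^{(\bar\alpha+1)/2}$ since $\bar\alpha\le 3$), giving the stated rate.

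The main obstacle is the quantitative form of Pal's comparison in the bias step: one needs the Laplace-type expansion of the entropic kernel to hold with remainder of exactly the order $\eps^{(\bar\alpha+1)/2}$ dictated by the $\cC^{\alpha+1}$ regularity of $\varphi_0^*$, and in particular one must first establish that the entropic potential $g_\eps$ is close to the Brenier potential $g_0$ in a sufficiently strong ($\cC^2$-type) sense on $\Omega$, uniformly for small $\eps$ — this is where assumptions \textbf{(A2)}--\textbf{(A3)} do the work — and verify that truncating the Gaussian conditionals to $\Omega$ costs only an exponentially small error. A secondary difficulty is tracking the precise $\eps$-dependence in the empirical-process bound, since the entropic kernel degenerates as $\eps\to 0$: a crude sup-norm bound loses the factor we need, so the variance-based (Bernstein) control, and the reduction of the potential-perturbation contribution to it, must be done carefully.
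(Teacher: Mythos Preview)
Your decomposition $\|T_{\eps,n}-T_0\|^2 \le 2\|T_{\eps,n}-T_\eps\|^2 + 2\|T_\eps-T_0\|^2$ is natural, but it is \emph{not} what the paper does, and the fluctuation half of your plan has a real gap.

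\textbf{How the paper actually proceeds.} The paper never passes through the population map $T_\eps$. Instead it applies the modified duality (\cref{prop:new_dual}) directly to the pair $(P,Q_n)$ with a test function built from Pal's divergence density $q_\eps^x(y)\propto e^{-D[y|x^*]/\eps}$ (not a truncated Gaussian), obtaining a single variational inequality (\cref{prop: main}) of the form
\[
\E\sup_h \int j_h\,\dd\pi_{\eps,n} - \iint(e^{j_h}-1)q_\eps^x(y)\,\dd y\,\dd P(x)
\lesssim \eps I_0 + \eps^{(\bar\alpha-1)/2} + \eps^{-d'/2}\log(n)n^{-1/2}.
\]
All three error terms already sit on the right side: the Laplace remainder, the Fisher information from \cref{thm: faster_thm}, and the statistical fluctuation. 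Crucially, the fluctuation enters only as $\eps^{-1}\bigl(S_\eps(P,Q_n)-S_\eps(P,Q)\bigr)$ plus a simple $d$-parameter empirical process; both are bounded via the $\cC^s$-smoothness of entropic potentials and a Dudley chaining on H\"older balls (\cref{empirical_process}). This is where $d'=2\ceil{d/2}$ comes from --- it is the smallest even integer $s\ge d$ for which the H\"older covering-number exponent $d/s\le 1$ keeps the entropy integral finite --- not from any Gaussian second-moment computation. Finally one chooses $h(x)=\tfrac{1}{2a}(T_{\eps,n}(x)-T_0(x))$ with $a=C\eps$; the choice $a\asymp\eps$ is admissible precisely because $q_\eps^x$ is $O(\eps)$-sub-Gaussian (\cref{lem:subg,lem:exp_bound}), and multiplying the right side by $a\asymp\eps$ is what produces the extra factor $\eps$ in $\eps^{1-d'/2}$, $\eps^{(\bar\alpha+1)/2}$, and $\eps^2 I_0$.

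\textbf{The gap in your fluctuation step.} To get $\E\|T_{\eps,n}-T_\eps\|^2\lesssim \eps^{1-d'/2}\log(n)n^{-1/2}$ by your route you must control $T_{\eps,n}-T_\eps$ directly, which means controlling the random potential $g_{\eps,n}-g_\eps$. Your appeal to ``strong concavity of the Sinkhorn dual functional'' does not obviously give this: the dual is strongly concave with modulus depending on $\eps$, and translating that into pointwise bounds on $g_{\eps,n}-g_\eps$ sharp enough to survive division by the denominator is delicate. More importantly, your stated mechanism for the factor $\eps^{1-d'/2}$ --- a Bernstein variance bound of order $\eps^{1-d'/2}$ from ``$\int(\text{Gaussian density})^2\asymp(4\pi\eps)^{-d/2}$ combined with the $O(\eps)$ conditional variance'' --- does not produce that quantity (the first gives $\eps^{-d/2}$, and there is no reason the second multiplies it), and $d'$ has nothing to do with an even-dimensional Gaussian majorant. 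Without a replacement for this step, your decomposition only yields $\eps^{-d'/2}\log(n)n^{-1/2}$ for the fluctuation (as in the two-sample \cref{thm:two_sample}, where $a$ must be a constant), which is too weak for the one-sample rate.

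\textbf{Minor points.} Your bias argument is close in spirit to the paper's, but the paper uses $q_\eps^x\propto e^{-D[y|x^*]/\eps}$ rather than a truncated Gaussian; this avoids both the truncation error and the need to know $g_\eps\approx g_0$ in $\cC^2$ --- a closeness result the paper never proves and does not need. The bias bound $\|T_\eps-T_0\|_{L^2(P)}^2\lesssim\eps^{(\bar\alpha+1)/2}+\eps^2 I_0$ you aim for is correct, and the paper records it as a corollary of \cref{thm:one_sample} rather than as an ingredient in its proof.
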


\begin{remark}
	It can happen that $I_0(P, Q)$ is infinite, so the bounds of \cref{thm:main,thm:one_sample} are sometimes vacuous. However, \cite{chizat2020faster} prove that $I_0(P, Q) \leq C$ for a positive constant $C$ when $P$ and $Q$ satisfy \textbf{(A1)} to \textbf{(A3)} for $\alpha \geq 2$. Therefore, in this range for $\alpha$, we obtain the rates in the theorems above without additional restrictions.
\end{remark}

The proof of \cref{thm:one_sample} is technical, and our approach is closely inspired by \cite{pal2019difference} and empirical process theory arguments developed by \cite{GenChiBac18} and \cite{MenNil19}.
We give a summary of our argument here, and carry out the details in the following section. 

Following~\cite{pal2019difference}, we define the \emph{divergence} $D[y|x^*] := -x^\top y + \varphi_0(x) + \varphi_0^*(y)$, where $\varphi_0$ solves \cref{eq: semidual}.
Though this quantity is a function of $x$ and $y$, it is notationally convenient to write it in a way that highlights its dependence on $x^* := T_0(x)$.
Indeed, we rely throughout on the following fact
\begin{lemma}\label{lem: div_expansion}
Under assumptions \textbf{(A2)} and \textbf{(A3)}, for any $x \in \supp(P)$, we have
\begin{equation}\label{eq:d_expand}
D[y|x^*] = \frac 12 (y - x^*)^\top \nabla^2 \varphi_0^*(x^*) (y - x^*) + o(\|y - x^*\|^2) \quad \text{as $y \to x^*$}\,,
\end{equation}
as well as the non-asymptotic bound
\begin{equation}\label{quad_bounds}
\frac 1 {2L} \|y - x^*\|^2 \leq D[y|x^*] \leq \frac 1 {2\mu} \|y - x^*\|^2\,.
\end{equation}
\end{lemma}
\begin{proof}
This follows directly from Taylor's theorem and the fact that $\nabla \varphi_0^*(x^*) = T_0^{-1}(x^*) = x$.
\end{proof}

We then define a conditional probability density in terms of this divergence:
\begin{align}
\quad q_\eps^x(y) = \frac{1}{Z_\eps(x)\Lambda_\eps}e^{\frac{-1}{\eps}D[y|x^*]} \, , \quad Z_\eps(x) := \frac{1}{\Lambda_\eps}\int \exp\pran{-\frac{1}{\eps}D[y|x^*]} dy,
\end{align}
for $\Lambda_\eps = (2\pi\eps)^{d/2}$.
By virtue of~\cref{eq:d_expand}, if $\varphi_0^*$ is sufficiently smooth, then $q_\eps^x$ will be approximately Gaussian with mean $x^*$ and covariance $\eps \nabla^2 \varphi_0^*(x^*)^{-1} = \eps \nabla^2 \varphi_0(x)$.
We quantify this approximation via Laplace's method; details appear in \cref{sec: laplace_method}.
Using variational arguments, reminiscent of those employed by \cite{BobGot99} in the study of transportation inequalities, we then compare the measure $\pi_{\eps, n}$ to the measure  $q_\eps^x(y) \dd y\dd P(x)$ and compute accurate estimates of $T_{\eps, n}$ via Laplace's method.

A similar but much simpler argument establishes the following bound in the two-sample case.

\begin{theorem}\label{thm:two_sample}
	Let $T_{\eps, (n, n)}$ be the entropic map from $P_n$ to $Q_n$, and let $T_{\eps, n}$ be as in \cref{thm:one_sample}. Under assumptions \textbf{(A1)} to \textbf{(A3)}, for $\eps \leq 1$, $T_{\eps,(n,n)}$ satisfies
	\begin{equation*}
	\E \|T_{\eps, (n, n)} - T_{\eps, n}\|_{L^2(P)}^2 \lesssim \eps^{-d/2} \log (n) n^{-1/2}\,.
	\end{equation*}
\end{theorem}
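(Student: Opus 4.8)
Here is how I would approach the proof of \cref{thm:two_sample}.

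The plan is to compare the two entropic problems $S_\eps(P_n,Q_n)$ and $S_\eps(P,Q_n)$ directly against one another; this is shorter than the one-sample analysis because the two problems share the target marginal $Q_n$, so no Gaussian reference measure or Laplace estimate is needed. Write $(f_{\eps,n},g_{\eps,n})$ and $(f_{\eps,(n,n)},g_{\eps,(n,n)})$ for the entropic potentials of $(P,Q_n)$ and $(P_n,Q_n)$, normalized as in \cref{dual_opt}, and let $\pi_{\eps,n}^x,\pi_{\eps,(n,n)}^x$ be the induced conditional laws, which are defined for every $x\in\RR^d$ and have means $T_{\eps,n}(x)$ and $T_{\eps,(n,n)}(x)$ (cf.\ \cref{prop: ent_brenier}).

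First I would insert $(f_{\eps,n},g_{\eps,n})$ into the dual functional of $S_\eps(P_n,Q_n)$ and, symmetrically, $(f_{\eps,(n,n)},g_{\eps,(n,n)})$ into the dual of $S_\eps(P,Q_n)$. The marginalization conditions \cref{dual_opt} make the exponential penalty integrate to one after integration in $y$, so both evaluations collapse to sums of two linear terms, and the resulting dual suboptimality gaps are
\begin{equation*}
G = \int (f_{\eps,(n,n)}-f_{\eps,n})\dd P_n + \int (g_{\eps,(n,n)}-g_{\eps,n})\dd Q_n \ge 0,\qquad G' = \int (f_{\eps,n}-f_{\eps,(n,n)})\dd P + \int (g_{\eps,n}-g_{\eps,(n,n)})\dd Q_n \ge 0.
\end{equation*}
Adding them, the $g$-terms cancel and the $f$-terms combine to the identity $G+G' = \int (f_{\eps,(n,n)}-f_{\eps,n})\dd(P_n-P)$. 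Moreover, by the standard primal--dual identity for entropic transport---a test pair undershoots the dual optimum by exactly $\eps$ times a Kullback--Leibler divergence of conditionals (plus an $\eps$-multiple of a mass defect that vanishes here)---one has $G = \eps\int \KL{\pi_{\eps,(n,n)}^x}{\pi_{\eps,n}^x}\dd P_n(x)$ and, crucially integrated against the \emph{true} marginal, $G' = \eps\int \KL{\pi_{\eps,n}^x}{\pi_{\eps,(n,n)}^x}\dd P(x)$.

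Since every conditional law is supported in the compact set $\Omega$, the Donsker--Varadhan formula with Hoeffding's lemma gives $\|T_{\eps,n}(x)-T_{\eps,(n,n)}(x)\|^2 \lesssim \KL{\pi_{\eps,n}^x}{\pi_{\eps,(n,n)}^x}$ for every $x$; integrating against $P$ and using $G\ge 0$,
\begin{equation*}
\EE\|\tse - T_{\eps,n}\|_{L^2(P)}^2 \lesssim \tfrac1\eps\,\EE\,(G+G') = \tfrac1\eps\,\EE\int (f_{\eps,(n,n)}-f_{\eps,n})\dd(P_n-P).
\end{equation*}
It then remains to bound the empirical process on the right. Because $f_{\eps,n}$ depends only on $Y_1,\dots,Y_n$, conditioning on those samples gives $\EE\int f_{\eps,n}\dd(P_n-P)=0$, so the quantity equals $\EE\int f_{\eps,(n,n)}\dd(P_n-P)$. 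Up to an additive constant (immaterial against $P_n-P$), every entropic potential with target in $\cP(\Omega)$ equals $-\eps\log(\phi_\eps*\rho)$ for some $\rho\in\cP(\Omega)$, where $\phi_\eps$ is the centered Gaussian density of covariance $\eps I$; bounding by the supremum over this deterministic class and running a chaining argument---the same one behind the one-sample statistical term in \cref{sec: one_samp}, which exploits the Gaussian-mollification (rather than merely $\cC^k$) structure---gives $\lesssim \eps^{1-d'/2}\log(n)\,n^{-1/2}$, with $d'=2\ceil{d/2}$ and the logarithmic factor inherited from the same bound. Combined with the previous display, this yields the claim.

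The duality bookkeeping and the transport inequality are short and exact; the real work is the chaining estimate, and in particular extracting the sharp exponent $\eps^{1-d'/2}$ rather than the much weaker bound a naive entropy count of a $\cC^k$-ball would give---this requires the uniform-in-samples regularity of the mollified potentials and a metric-entropy bound for the class $\{-\eps\log(\phi_\eps*\rho):\rho\in\cP(\Omega)\}$. Since that estimate is already established in the one-sample analysis, essentially nothing new is needed, which is why the two-sample argument is ``much simpler.''
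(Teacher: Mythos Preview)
Your argument is correct and reaches the same endpoint as the paper, but the route differs in an interesting way. The paper proves \cref{thm:two_sample} via \cref{prop:two_sample_main} (a specialization of \cref{prop: delta_dev}), which applies the modified duality of \cref{prop:new_dual} to $(P,Q_n)$ with test function $\eta=\eps\chi+f_{\eps,(n,n)}+g_{\eps,(n,n)}$, then chooses $\chi(x,y)=h(x)^\top(y-T_{\eps,(n,n)}(x))-a\|h(x)\|^2$ and uses Hoeffding to kill the exponential term before setting $h=\tfrac1{2a}(T_{\eps,n}-T_{\eps,(n,n)})$. You instead exploit the exact identity that the two dual suboptimality gaps equal $\eps$ times the integrated conditional Kullback--Leibler divergences, add them so the $g$-terms cancel, and recover the map gap from the KL via Donsker--Varadhan plus Hoeffding. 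This is arguably cleaner: it avoids \cref{prop:new_dual} entirely and makes the role of boundedness (Hoeffding) and of the shared marginal $Q_n$ fully transparent. Both approaches land on the same empirical process $\E\int f_{\eps,(n,n)}\,\rd(P_n-P)$, and both invoke independence of $f_{\eps,n}$ from $P_n$ to center it.

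One small correction on that last step: the paper does \emph{not} need any special ``Gaussian-mollification'' entropy estimate beyond the $\cC^k$-ball count. It simply quotes \citet[Proposition~1]{GenChiBac18}, which gives $\|f_{\eps,(n,n)}\|_{\cC^s}\lesssim 1+\eps^{1-s}$ uniformly, and then applies \cref{empirical_process} with the integer $s=\lceil d/2\rceil=d'/2$; this already yields the $\eps^{1-d'/2}\log(n)n^{-1/2}$ rate you want. So your remark that ``a naive entropy count of a $\cC^k$-ball'' would be too weak is off---the $\cC^k$ route is exactly what the paper does, and it suffices.
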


Combining \cref{thm:one_sample,thm:two_sample} yields our main result.
\begin{proof}[Proof of \cref{thm:main}]
	We have
\begin{align*}
\E\| T_{\eps,(n,n)} - T_0\|^2_{L^2(P)} &\lesssim \E \| T_{\eps,(n,n)} - T_{\eps,n}\|^2_{L^2(P)} + \E\| T_{\eps,n} - T_0\|^2_{L^2(P)} \\
&\lesssim  \eps^{-d/2}\log(n)n^{-1/2} + \eps^{(\bar \alpha + 1)/2} + \eps^2 I_0(P, Q)\,.
\end{align*}
Choosing $\eps \asymp n^{- \frac{1}{d + \bar \alpha + 1}}$ yields the bound.
\end{proof}

\section{One-sample estimates}\label{sec: one_samp}
In this section, we prove \cref{thm:one_sample}, which relates $T_0$ to the entropic map between $P$ and $Q_n$:
\begin{equation*}
T_{\eps, n}(x)  = \frac{\int y e^{\frac 1 \eps(g_{\eps,n }(y) - \frac{1}{2}\|x-y\|^2)} \dd Q_n(y)}{\int e^{\frac 1 \eps( g_{\eps, n}(y) - \frac{1}{2}\|x-y\|^2)} \dd Q_n(y)}= \int y \dd \pi_{\eps, n}^x(y)\,,
\end{equation*}
where $\pi_{\eps, n}$ is the optimal entropic plan for $P$ and $Q_n$.
We stress that since $T_{\eps, n}$ is based on the entropic map from $P$ to $Q_n$, the second equality holds for $P$-almost every $x$.

Our main tool is the following inequality, which allows us to compare $\pi_{\eps, n}$ to the measure constructed from the conditional densities $q_\eps^x$.
The proof relies crucially on \cref{prop:new_dual} and on the second order-expansion provided in \cref{thm: faster_thm}.

\begin{proposition}\label{prop: main}
Assume \textbf{(A1)} to \textbf{(A3)}, and let $a \in [L \eps, 1]$ for $\eps \leq 1$.
Then
\begin{multline}
\E \Big\{\sup_{h: \Omega \to \RR^d} \iint \left(h(x)^\top(y - T_0(x))  - a \|h(x)\|^2\right) \dd \pi_{\eps, n}(x, y) \\ - \iint (e^{h(x)^\top(y - T_0(x))  - a \|h(x)\|^2} - 1)q_\eps^x(y)\dd y \dd P(x) \Big\} \\ \lesssim \eps I_0(P,Q) + \eps^{(\bar \alpha - 1)/2} + \eps^{-d/2} \log(n) n^{-1/2}\,,
\end{multline}
where the supremum is taken over all $h \in L^2(P)$.
\end{proposition}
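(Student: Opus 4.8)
The plan is to feed a carefully engineered test function into the modified duality \cref{eq:new_dual} for the pair $(P,Q_n)$, whose optimal entropic plan is $\pi_{\eps,n}$, thereby reducing the quantity in the statement to $\tfrac1\eps\bigl(S_\eps(P,Q_n)-\tfrac12 W_2^2(P,Q)\bigr)$ up to terms controlled by \cref{thm: faster_thm}, Laplace's method, and an empirical-process argument. Abbreviate $G_h(x,y)\defeq h(x)^\top(y-T_0(x))-a\|h(x)\|^2$, and, for a fixed bounded $h$, insert
\[
\eta(x,y)=\eps\,G_h(x,y)+\tfrac12\|x-y\|^2+\eps\log q_\eps^x(y)-\eps\log q(y)
\]
into \cref{prop:new_dual}. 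With $f_0=\tfrac12\|\cdot\|^2-\varphi_0$ and $g_0=\tfrac12\|\cdot\|^2-\varphi_0^*$ the Kantorovich potentials (cf.\ the Remark after \cref{eq: ot_dual}), one has $D[y|x^*]=\tfrac12\|x-y\|^2-f_0(x)-g_0(y)$, hence $\eps\log q_\eps^x(y)=-D[y|x^*]-\eps\log(\Lambda_\eps Z_\eps(x))$; the quadratic cost then cancels inside $\int\eta\dd\pi_{\eps,n}$, and, since $\pi_{\eps,n}$ has marginals $P,Q_n$ and the exponential term in \cref{eq:new_dual} collapses to $\eps\iint e^{G_h(x,y)}q_\eps^x(y)/q(y)\dd P(x)\dd Q_n(y)$, dividing by $\eps$ yields the deterministic master inequality
\[
\iint G_h\dd\pi_{\eps,n}-\iint e^{G_h}\frac{q_\eps^x(y)}{q(y)}\dd P(x)\dd Q_n(y)+1\ \le\ \frac1\eps\Bigl(S_\eps(P,Q_n)-\int f_0\dd P-\int g_0\dd Q_n\Bigr)+\int\log(\Lambda_\eps Z_\eps)\dd P+\int\log q\dd Q_n .
\]

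Next, decompose $\iint G_h\dd\pi_{\eps,n}-\iint(e^{G_h}-1)q_\eps^x(y)\dd y\dd P(x)$ as the left side of the master inequality plus the centered fluctuation $R(h)\defeq\iint e^{G_h}\tfrac{q_\eps^x}{q}\dd P\dd Q_n-\iint e^{G_h}q_\eps^x\dd y\dd P$. The master inequality's right side has no $h$-dependence, so taking $\sup_h$ then $\EE$, and using $\EE\int g_0\dd Q_n=\int g_0\dd Q$, $\int f_0\dd P+\int g_0\dd Q=\tfrac12 W_2^2(P,Q)$, and $\EE\int\log q\dd Q_n=\Ent(Q)$, gives
\[
\EE\sup_h\{\cdots\}\ \le\ \frac1\eps\bigl(\EE S_\eps(P,Q_n)-\tfrac12 W_2^2(P,Q)\bigr)+\int\log(\Lambda_\eps Z_\eps)\dd P+\Ent(Q)+\EE\sup_h R(h).
\]
Three inputs now finish the deterministic part. (i) \Cref{thm: faster_thm} gives $\tfrac1\eps\bigl(S_\eps(P,Q)-\tfrac12 W_2^2(P,Q)\bigr)\le-\log\Lambda_\eps-\tfrac12(\Ent(P)+\Ent(Q))+\tfrac\eps8 I_0(P,Q)$. (ii) Laplace's method (\cref{sec: laplace_method}) applied to $Z_\eps(x)=\Lambda_\eps^{-1}\int e^{-D[y|x^*]/\eps}\dd y$, using that $D[\cdot|x^*]$ is the Bregman divergence of $\varphi_0^*\in\cC^{\alpha+1}$ and that odd-order Taylor terms integrate to zero against the Gaussian $\mathcal N(x^*,\eps\nabla^2\varphi_0(x))$, yields $Z_\eps(x)=(\det\nabla^2\varphi_0(x))^{1/2}\bigl(1+O(\eps^{(\bar\alpha-1)/2})\bigr)$ uniformly on $\Omega$ by \textbf{(A3)}; combined with the Monge--Amp\`ere relation $p=(q\circ T_0)\det\nabla^2\varphi_0$ this gives $\int\log(\Lambda_\eps Z_\eps)\dd P=\log\Lambda_\eps+\tfrac12\Ent(P)-\tfrac12\Ent(Q)+O(\eps^{(\bar\alpha-1)/2})$. (iii) The entropic optimal transport sample-complexity bounds of \cite{GenChiBac18,MenNil19}, whose hypotheses hold under \textbf{(A1)}--\textbf{(A3)}, give $\EE S_\eps(P,Q_n)\le S_\eps(P,Q)+C\eps^{-d'/2+1}\log(n)\,n^{-1/2}$. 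Substituting (i)--(iii), the $\log\Lambda_\eps$, $\Ent(P)$, and $\Ent(Q)$ terms cancel exactly (the differential entropies are finite by \textbf{(A1)}), leaving the deterministic part $\lesssim\eps I_0(P,Q)+\eps^{(\bar\alpha-1)/2}+\eps^{-d'/2}\log(n)\,n^{-1/2}$.

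Finally one bounds $\EE\sup_h R(h)$. The key pointwise estimate is that, because $a\ge L\eps$ and $D[y|x^*]\ge\tfrac1{2L}\|y-x^*\|^2$ by \cref{quad_bounds}, $e^{G_h(x,y)}q_\eps^x(y)=(\Lambda_\eps Z_\eps(x))^{-1}\exp\!\bigl(h(x)^\top(y-x^*)-a\|h(x)\|^2-D[y|x^*]/\eps\bigr)\le(\Lambda_\eps Z_\eps(x))^{-1}\lesssim\eps^{-d/2}$ for \emph{every} bounded $h$; since $q\ge m$ on $\Omega$, the functions $y\mapsto\int e^{G_h(x,y)}q_\eps^x(y)/q(y)\dd P(x)$ are uniformly bounded by $O(\eps^{-d/2})$, with smoothness controlled via the Laplace expansion of $q_\eps^x$, so a chaining argument in the spirit of \cite{GenChiBac18,MenNil19}, after truncating the $y$-integral to $\Omega$ (an error exponentially small in $1/\eps$ but costing a $\log n$ factor), gives $\EE\sup_h R(h)\lesssim\eps^{-d'/2}\log(n)\,n^{-1/2}$, the increase from $d$ to $d'=2\ceil{d/2}$ being an artifact of the Laplace remainder estimates. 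Combining with the deterministic bound proves the claim. The main obstacle is exactly the quantitative control underlying (iii) and this last step: obtaining the sample-complexity rate $\eps^{-d'/2+1}\log(n)\,n^{-1/2}$ and the matching empirical-process rate, with the correct power of $1/\eps$, rounding to even dimension, and logarithmic factor, requires a careful marriage of Laplace's method with empirical-process theory, whereas the algebraic reduction through \cref{prop:new_dual} and the cancellation against \cref{thm: faster_thm} and the Laplace expansion of $Z_\eps$ are essentially mechanical once $\eta$ is guessed.
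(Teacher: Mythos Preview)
Your proposal is essentially the paper's proof: the test function $\eta$ you plug into \cref{prop:new_dual} is identical to the paper's, and the ensuing algebra---the cancellation of $\log\Lambda_\eps$ and the entropy terms via \cref{thm: faster_thm} together with the Laplace expansion of $Z_\eps$ (the paper invokes \citet[Lemma~3(iv)]{pal2019difference} where you cite Monge--Amp\`ere, which amounts to the same thing)---matches exactly.

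The one place where your outline is genuinely looser than the paper is the bound on $\EE\sup_h R(h)$, i.e., the term the paper calls $\Delta_3$. Your appeal to ``smoothness controlled via the Laplace expansion of $q_\eps^x$'' and ``a chaining argument in the spirit of \cite{GenChiBac18,MenNil19}'' does not quite work: the class is indexed by \emph{all} bounded $h:\Omega\to\RR^d$, and smoothness in $y$ alone does not give covering numbers small enough for the rate you claim. The paper's device is instead to move the $\sup_h$ inside the $P$-integral, so that for each fixed $x$ one is taking a supremum over the \emph{finite-dimensional} class $\{y\mapsto e^{j_v(y)}q_\eps^x(y)/q(y):v\in\RR^d\}$, whose $\log$-covering number is $O(d\log(K/\tau))$ with envelope $K\lesssim\eps^{-d/2}$; see \cref{exp_empirical_process}. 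The pointwise bound you correctly identify (via $a\ge L\eps$ and \cref{quad_bounds}) is precisely the input to that lemma, so you have the right ingredient but not the right chaining class. Two minor points: the truncation of the $y$-integral from $\RR^d$ to $\Omega$ contributes a \emph{nonpositive} error (one simply drops a nonnegative integrand), so no log factor is incurred there; and the passage from $d$ to $d'$ in the final rate comes from the Genevay--Chizat--Bach smoothness bounds on entropic potentials used in $\Delta_2$, not from Laplace remainders.
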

\begin{proof}
	Given $h \in L^2(P)$, write
	\begin{equation*}
	j_h(x, y) = h(x)^\top (y - T_0(x)) - a \|h(x)\|^2\,.
	\end{equation*}
	Choosing $\eta(x, y) = \eps(j_h(x, y) + \log(q_\eps^x(y)/q(y)) )+ \|x- y\|^2/2$ and applying \cref{prop:new_dual} with the measures $P$ and $Q_n$, we obtain
	\begin{align*}
	\sup_{h: \Omega \to \RR^d} \int j_h \dd \pi_{\eps, n} &+ \int \log\frac{q_\eps^x(y)e^{\frac 1{2\eps} \|x- y\|^2}}{q(y)} \dd \pi_{\eps, n}(x, y) \\
	&- \iint e^{j_h(x, y)} \frac{q_\eps^x(y)}{q(y)} \dd Q_n(y) \dd P(x) + 1 \leq \eps^{-1} S_\eps(P, Q_n)\,.
	\end{align*}
	
	We first expand $\iint \log\frac{q_\eps^x(y)e^{\frac 1{2\eps} \|x- y\|^2}}{q(y)}\dd \pi_{\eps,n}(x,y)$, where we use the fact that $\pi_{\eps,n}$ has marginals $P$ and $Q_n$:
	\begin{align*}
	&\iint \log\frac{q_\eps^x(y)e^{\frac 1{2\eps} \|x- y\|^2}}{q(y)}\dd \pi_{\eps,n}(x,y) \\
	&\qquad = \frac 1 \eps \iint \bigg[f_0(x) + g_0(y) + \eps\log\pran{\frac{1}{Z_\eps(x)\Lambda_\eps}} - \eps\log(q(y))\bigg] \dd \pi_{\eps,n}(x,y) \\
	&\qquad = \frac 1 \eps \Big(\int f_0(x)\dd P(x) + \int g_0(y)\dd Q_n(y)\Big) - \log(\Lambda_\eps) \\
	&\qquad \phantom{=} -  \int \log(Z_\eps(x)) \dd P(x) - \int \log(q(y))\dd Q_n(y)\,,
	\end{align*}
where $(f_0, g_0)$ solve \eqref{eq: ot_dual}. Replacing $Q_n$ by $Q$ yields
\begin{align*}
	\iint \log\frac{q_\eps^x(y)e^{\frac 1{2\eps} \|x- y\|^2}}{q(y)}\dd \pi_{\eps,n}(x,y) & = \frac{1}{2\eps }W_2^2(P,Q)  - \log(\Lambda_\eps)
	-  \int \log(Z_\eps(x)) \dd P(x)  \\
	& \phantom{=}- \Ent(Q)+ \int (g_0/\eps - \log(q))(\rd Q_n - \rd Q).	
\end{align*}

	A change of variables~\citep[see][Lemma 3(iv)]{pal2019difference}
	implies
	$$ \frac{\Ent(Q) - \Ent(P)}{2} = \int \log J(\nabla^2 \varphi_0^*(x^*)) \dd P(x)\,, $$
	where we recall that $x^* = T_0(x)$.
	Substituting this identity into the preceding expression yields
	\begin{align*}
	\iint \log\frac{q_\eps^x(y)e^{\frac 1{2\eps} \|x- y\|^2}}{q(y)}\dd \pi_{\eps,n}(x,y) &= \frac{1}{2\eps }W_2^2(P,Q)  - \log(\Lambda_\eps) - \frac{1}{2}(\Ent(Q)+\Ent(P))\\
	& \phantom{=} + \int (g_0/\eps-\log(q))(\rd Q_n - \rd Q) \\
	&\phantom{=} - \int \log(Z_\eps(x) J(\nabla^2 \varphi_0^*(x^*))) \dd P(x) \,.
	\end{align*}
	
	We therefore obtain
	\begin{multline*}
	\sup_{h: \Omega \to \RR^d} \int j_h \dd \pi_{\eps, n}  - \iint e^{j_h(x, y)} \frac{q_\eps^x(y)}{q(y)} \dd Q_n(y) \dd P(x) + 1 \\ \leq \eps^{-1} \Big(S_\eps(P, Q_n) - \frac 12 W_2^2(P, Q) + \eps \log(\Lambda_\eps) + \frac\eps 2 (\Ent(Q) + \Ent(P))\Big) + \Delta_1\,,
	\end{multline*}
	where $\Delta_1 := \int (g_0/\eps-\log(q))(\rd Q - \rd Q_n) + \int \log(Z_\eps(x) J(\nabla^2 \varphi_0^*(x^*))) \dd P(x)$.
	Applying \cref{thm: faster_thm}, we may further bound
	\begin{equation*}
	\sup_{h: \Omega \to \RR^d} \int j_h \dd \pi_{\eps, n}  - \iint e^{j_h(x, y)} \frac{q_\eps^x(y)}{q(y)} \dd Q_n(y) \dd P(x) + 1 \leq \frac \eps 8 I_0 + \Delta_1 + \Delta_2\,,
	\end{equation*}
	where $\Delta_2 := \eps^{-1}(S_\eps(P, Q_n) - S_\eps(P, Q))$.
	Now we turn our attention to the second term on the left side.
	Since
	\begin{align*}
	\iint e^{j_h(x, y)} \frac{q_\eps^x(y)}{q(y)} \dd Q(y) \dd P(x) &= \iint_{\supp(Q)} e^{j_h(x, y)}q_\eps^x(y) \dd y \dd P(x) \\
	&\leq \iint e^{j_h(x, y)}q_\eps^x(y) \dd y \dd P(x)\,,
	\end{align*}
	we have
	\begin{equation*}
	\sup_{h: \Omega \to \RR^d} \int j_h \dd \pi_{\eps, n}  - \iint (e^{j_h(x, y)} - 1)q_\eps^x(y) \dd y \dd P(x) \leq \frac \eps 8 I_0 + \Delta_1 + \Delta_2 + \Delta_3\,,
	\end{equation*}
	where
	\begin{equation*}
	\Delta_3 := \sup_{h: \Omega \to \RR^d} \iint e^{j_h(x, y)} \frac{q_\eps^x(y)}{q(y)} \dd P(x) (\rd Q_n - \rd Q)(y) 
	\end{equation*}
	and where we have used the fact that $q_\eps^x(y)$ is a probability density.
	
	It therefore remains only to show that
	\begin{equation*}
	\E [\Delta_1 + \Delta_2 + \Delta_3] \lesssim \eps^{(\bar \alpha - 1)/2} + \eps^{-d/2} \log(n) n^{-1/2}\,.
	\end{equation*}
	
	First, a Laplace expansion (\cref{laplace_z}) implies
	\begin{equation*}
	\E \Delta_1 = \int \log(Z_\eps(x) J(\nabla^2 \varphi_0^*(x^*))) \dd P(x) \lesssim \eps^{(\bar \alpha - 1)/2}
	\end{equation*}
	Second, known results on the finite-sample convergence of the Sinkhorn divergence (\cref{cor: emp_proc_special}) yield
	\begin{equation*}
	\E \Delta_2 \lesssim (\eps^{-1} + \eps^{-d/2}) \log(n) n^{-1/2}\,,
	\end{equation*}
	
	It therefore remains to bound $\Delta_3$, which an empirical process theory argument (\cref{exp_empirical_process}) shows
	\begin{equation*}
	\E \Delta_3 \lesssim \eps^{-d/2} n^{-1/2}
	\end{equation*}
	as long as $a \in [L\eps, 1]$.
	
	We obtain that
	\begin{equation*}
	\E [\Delta_1 + \Delta_2 + \Delta_3] \lesssim \eps^{(\bar \alpha - 1)/2} + (\eps^{-1} + \eps^{-d/2}) \log(n) n^{-1/2} + \eps^{-d/2}  n^{-1/2}\,,
	\end{equation*}
	and since $\eps \leq 1$, we obtain the bound
	\begin{equation*}
	\E [\Delta_1 + \Delta_2 + \Delta_3] \lesssim \eps^{(\bar \alpha - 1)/2} + \eps^{-d/2}  n^{-1/2}\log(n)\,,
	\end{equation*}
	as desired.
\end{proof}

To exploit \cref{prop: main}, we show that we can choose a function $h$ for which the left side of the above expression scales like $\|T_{\eps, n} - T_0\|_{L^2(P)}^2$.

We first establish three lemmas, whose proofs are deferred.
\begin{lemma}\label{lem:subg}
Fix $x \in \supp(P)$, and write $\bar y^x = \int y q_\eps^x(y) \dd y$.
There exists a positive constant $C$, independent of $x$, such that for all $\eps  \in (0, 1)$ and $\|v\|_2 \leq 1$,
\begin{equation*}
\int e^{(v^\top(y - \bar y^x))^2/(C\eps)} q_\eps^x(y) \dd y \leq 2\,.
\end{equation*}
\end{lemma}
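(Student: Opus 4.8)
The plan is to prove a sub-Gaussian tail bound for $y - \bar y^x$ under the density $q_\eps^x$ by exhibiting $q_\eps^x$ as a small perturbation of a Gaussian and invoking a log-Sobolev / bounded-perturbation argument. Recall that $q_\eps^x(y) \propto e^{-D[y|x^*]/\eps}$ and that \cref{quad_bounds} gives the two-sided quadratic control $\tfrac{1}{2L}\|y - x^*\|^2 \le D[y|x^*] \le \tfrac{1}{2\mu}\|y - x^*\|^2$. Thus $q_\eps^x$ is supported (effectively) where $\|y - x^*\|$ is $O(\sqrt{\eps})$ up to exponentially small mass, and after rescaling $z = (y - x^*)/\sqrt\eps$ the density of $z$ is $\propto e^{-D[x^* + \sqrt\eps z | x^*]/\eps}$, which by the same bounds is sandwiched between $e^{-\|z\|^2/(2\mu)}$ and $e^{-\|z\|^2/(2L)}$. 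A density on $\RR^d$ bounded above and below (on all of $\RR^d$) by Gaussian densities with fixed covariance scales is a bounded perturbation of a Gaussian in the sense needed for a Bakry--Émery / Holley--Stroock argument, so it satisfies a log-Sobolev inequality with a constant depending only on $\mu, L$, hence is sub-Gaussian with a universal (dimension-free) proxy.

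Concretely, the steps I would carry out are: (i) change variables to $z = (y - x^*)/\sqrt\eps$, reducing the claim to a statement about the law $\mu_\eps^x$ of $z$ that is uniform in $\eps$ and $x$; (ii) use \cref{quad_bounds} to write the density of $\mu_\eps^x$ as $e^{-\psi_\eps(z)}$ with $\tfrac{1}{2L}\|z\|^2 \le \psi_\eps(z) \le \tfrac{1}{2\mu}\|z\|^2$ (plus a normalizing constant that is bounded above and below by constants depending only on $\mu, L, d$, since the density is trapped between two Gaussians); (iii) apply the Holley--Stroock perturbation principle to the reference Gaussian $N(0, L I)$: writing $e^{-\psi_\eps} = e^{-\|z\|^2/(2L)} \cdot e^{-(\psi_\eps(z) - \|z\|^2/(2L))}$, the multiplicative factor $e^{-(\psi_\eps - \|z\|^2/(2L))}$ is bounded above by $1$ and below by $e^{-(\tfrac{1}{2\mu} - \tfrac{1}{2L})\|z\|^2}$ — which is \emph{not} bounded below, so a pure Holley--Stroock bound does not directly apply. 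I would instead observe that $\mu_\eps^x$ has a density that is \emph{pointwise} between two fixed Gaussians with the \emph{same} center at $0$, and invoke the elementary fact that such a measure is sub-Gaussian: for any unit vector $v$, $\int e^{t v^\top z}\dd\mu_\eps^x(z) \le \frac{\int e^{tv^\top z - \|z\|^2/(2L)}\dd z}{\int e^{-\|z\|^2/(2\mu)}\dd z} = \frac{(2\pi L)^{d/2} e^{Lt^2/2}}{(2\pi\mu)^{d/2}}$, so the moment generating function of $v^\top z$ grows like $e^{Lt^2/2}$ times a fixed constant. A standard centering argument (subtract the mean $\bar z^x = \int z \dd \mu_\eps^x$, whose norm is bounded by a constant via the same trapping) then upgrades this to a sub-Gaussian bound for $v^\top(z - \bar z^x)$, which after inverting the change of variables and choosing $C$ large enough (depending on $\mu, L, d$) yields $\int e^{(v^\top(y - \bar y^x))^2/(C\eps)}q_\eps^x(y)\dd y \le 2$.

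I expect the main obstacle to be handling the normalization and the mean $\bar y^x$ cleanly and uniformly in $x$ and $\eps$. The MGF bound above has a $(L/\mu)^{d/2}$ prefactor, which is harmless (a constant, by our conventions) but must be carried correctly through the conversion from MGF control to the claimed Gaussian-integral bound; in particular, to get the clean bound $\le 2$ one needs $C$ large relative to this prefactor, and one must check that the centering $\bar y^x$ (rather than $x^*$) does not spoil the estimate — this is where the two-sided bound \cref{quad_bounds} is used once more to control $\|\bar y^x - x^*\| \lesssim \sqrt\eps$, so that $v^\top(y - \bar y^x) = v^\top(y - x^*) - v^\top(\bar y^x - x^*)$ with the second term of order $\sqrt\eps$ and thus absorbable. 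A secondary technical point is that $q_\eps^x$ is the density with respect to Lebesgue measure restricted implicitly to where $D[y|x^*]$ is defined (a neighborhood of $x^*$ via \textbf{(A2)}); since $D$ grows at least quadratically, the contribution from outside any fixed ball is super-exponentially small in $1/\eps$ and can be absorbed, but one should note this to be rigorous. None of these is deep; the heart of the argument is simply that $q_\eps^x$, after rescaling, is pinched between two fixed Gaussians.
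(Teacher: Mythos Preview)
Your proposal is correct and rests on the same core observation as the paper: the two-sided quadratic bound \cref{quad_bounds} pins $q_\eps^x$ between two Gaussians of scale $\sqrt\eps$, which yields sub-Gaussianity uniformly in $x$ and $\eps$. The paper's execution is more direct, however: rather than bounding the MGF and then converting, it bounds the squared-exponential moment $\E e^{(v^\top(Y-x^*))^2/(4L\eps)}$ in one line using only the \emph{lower} bound $D[y|x^*] \ge \tfrac{1}{2L}\|y-x^*\|^2$ (so that $-\tfrac{1}{\eps}D + \tfrac{1}{4L\eps}\|y-x^*\|^2 \le -\tfrac{1}{4L\eps}\|y-x^*\|^2$), and controls the normalizer via \cref{laplace_z}. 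For recentering from $x^*$ to $\bar y^x$, the paper avoids estimating $\|\bar y^x - x^*\|$ separately and instead uses the inequality $(v^\top(Y-\bar Y))^2 \le 2(v^\top(Y-x^*))^2 + 2(v^\top(\bar Y - x^*))^2$ together with Jensen's inequality on the second term, which is cleaner than your proposed route. Your MGF argument and separate mean bound both work, but the paper's path is shorter and uses fewer ingredients.
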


In probabilistic language, \Cref{lem:subg} implies that if $Y^x$ is a random variable with density $q_\eps^x$, then $\eps^{-1/2}(Y^x - \E Y^x)$ is subgaussian~\citep{Ver18}.
By applying standard moment bounds for subgaussian random variables, we then arrive at the following result.
\begin{lemma}\label{lem:exp_bound}
There exists a positive constant $C$ such that if $a \geq C \ep$, then for any $h: \RR^d \to \RR^d$ we have
\begin{equation*}
\iint e^{h(x)^\top(y - T_0(x))  - a \|h(x)\|^2} q_\eps^x(y) \dd y \dd P(x) \leq \int e^{\frac 1 {4\eps} \|\bar y^x - T_0(x)\|^2} \dd P(x)\,.
\end{equation*}
\end{lemma}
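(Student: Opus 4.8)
The plan is to reduce the double integral to a supremum over the linear direction $h(x)$ for each fixed $x$, using the subgaussian estimate of \cref{lem:subg}. First I would fix $x \in \supp(P)$ and write $h = h(x) \in \RR^d$, $x^* = T_0(x)$, and $\bar y^x = \int y q_\eps^x(y)\dd y$. Splitting $y - x^* = (y - \bar y^x) + (\bar y^x - x^*)$, the exponent becomes
\begin{equation*}
h^\top(y - x^*) - a\|h\|^2 = h^\top(y - \bar y^x) + h^\top(\bar y^x - x^*) - a\|h\|^2\,.
\end{equation*}
Integrating $e^{h^\top(y - \bar y^x)}$ against $q_\eps^x$ and bounding the moment generating function of the (mean-zero, $\sqrt{\eps}$-scaled subgaussian) random variable $Y^x - \bar y^x$ via \cref{lem:subg} gives a bound of the form $e^{C\eps\|h\|^2/4}$ (the constant can be arranged by rescaling $C$); more precisely, \cref{lem:subg} together with the standard MGF bound for subgaussians yields $\int e^{h^\top(y-\bar y^x)}q_\eps^x(y)\dd y \le e^{C\eps\|h\|^2}$ for an appropriate constant.

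Next I would collect the quadratic terms in $\|h\|$: the inner integral is at most
\begin{equation*}
\exp\!\Big(h^\top(\bar y^x - x^*) - (a - C\eps)\|h\|^2\Big)\,.
\end{equation*}
Provided $a \ge C\eps$ (which is the hypothesis, after possibly enlarging $C$), the coefficient $a - C\eps \ge 0$; to get the stated constant $\tfrac{1}{4\eps}$ in the exponent on the right-hand side I would in fact want $a - C\eps \ge$ something comparable, so I would choose the constant in the lemma so that $a \ge C\eps$ forces $a - C\eps \ge \tfrac{1}{2}\cdot(\text{the relevant scale})$ — but more simply, since $a \ge C\eps$, we may drop the quadratic penalty entirely only if it is nonnegative, and then optimize. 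Completing the square in $h$: $\sup_h \big(h^\top b - c\|h\|^2\big) = \|b\|^2/(4c)$ for $c > 0$, with $b = \bar y^x - x^*$ and $c = a - C\eps$. Taking $C$ in the statement large enough that $a \ge C\eps$ implies $a - C\eps \ge \eps$ (e.g. replace $C$ by $C+1$), we get $\|b\|^2/(4c) \le \|\bar y^x - x^*\|^2/(4\eps)$. This holds pointwise in $x$; integrating against $\dd P(x)$ and using that $q_\eps^x(y)\dd y \le \dd Q(y)/(\cdot)$ is already normalized (it is a probability density, as used), the full left-hand side is bounded by $\int e^{\frac{1}{4\eps}\|\bar y^x - T_0(x)\|^2}\dd P(x)$, which is the claim.

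The only delicate point is the bookkeeping of constants: one must make sure the constant $C$ asserted in the lemma statement is chosen \emph{after} fixing the constant $C$ from \cref{lem:subg}, so that the hypothesis $a \ge C\eps$ genuinely absorbs both the subgaussian MGF contribution and leaves a residual coefficient at least $\eps$ to produce the clean $\tfrac{1}{4\eps}$ factor. I do not expect any real obstacle here — the argument is a fixed-$x$ Gaussian-type completion of the square, and the main content (the subgaussianity of $q_\eps^x$ uniformly in $x$) has already been isolated in \cref{lem:subg}. The only thing to be careful about is that the supremum over $h:\Omega\to\RR^d$ commutes with the integral over $x$, which is immediate because the bound is pointwise in $x$ and the maximizing $h(x)$ can be chosen measurably (indeed explicitly, $h(x) = (\bar y^x - T_0(x))/(2(a-C\eps))$).
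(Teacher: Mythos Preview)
Your proposal is correct and follows essentially the same route as the paper: fix $x$, split $y-T_0(x)=(y-\bar y^x)+(\bar y^x-T_0(x))$, use \cref{lem:subg} to bound the MGF of $Y-\bar y^x$ by $e^{C\ep\|h\|^2}$, and then absorb the remaining linear term $h^\top(\bar y^x-T_0(x))$ into $\tfrac{1}{4\ep}\|\bar y^x-T_0(x)\|^2$ by choosing $a\ge(C+1)\ep$. The paper phrases that last step as Young's inequality rather than your explicit $\sup_h$ formula, but these are the same computation; your remark about commuting the supremum with the $x$-integral is unnecessary since the lemma is stated for a fixed $h$ and the bound is already pointwise in $x$.
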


Finally, we show by an application of Laplace's method that $\bar y^x$ is close to $T_0(x)$.
\begin{lemma}\label{lem:mean}
Assume \textbf{(A1)} to \textbf{(A3)}. For all $x \in \supp(P)$,
\begin{equation*}
\|\bar y^x - T_0(x)\|^2 \lesssim \eps^{\alpha \wedge 2}\,.
\end{equation*}
\end{lemma}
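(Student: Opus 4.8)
The plan is to estimate the mean $\bar y^x = \int y\, q_\eps^x(y)\, \dd y$ of the density $q_\eps^x(y) \propto \exp(-\tfrac 1\eps D[y|x^*])$ by a Laplace-type expansion around the unique minimizer of $D[\cdot|x^*]$. By the expansion~\cref{eq:d_expand} and the fact (noted in the proof of the preceding lemma) that $\nabla\varphi_0^*(x^*) = x$, the divergence $D[y|x^*]$ is minimized at $y = x^* = T_0(x)$, where it vanishes, and its Hessian there is $\nabla^2\varphi_0^*(x^*) \succeq L^{-1} I$ by~\textbf{(A3)}. So $q_\eps^x$ concentrates near $x^*$ at scale $\sqrt\eps$, and to leading order it looks Gaussian with mean $x^*$; the content of the lemma is to control the first-order correction.

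Concretely, I would write $\bar y^x - T_0(x) = \int (y - x^*)\, q_\eps^x(y)\, \dd y$ and substitute $y = x^* + \sqrt\eps\, z$, so that the quantity becomes $\sqrt\eps \int z\, e^{-D[x^* + \sqrt\eps z\,|\,x^*]/\eps}\, \dd z \big/ \int e^{-D[x^* + \sqrt\eps z\,|\,x^*]/\eps}\, \dd z$. Using the quadratic lower and upper bounds~\cref{quad_bounds} for the tails (which make the Laplace approximation uniform in $x$, since the constants $L, \mu$ do not depend on $x$) and the smoothness $\varphi_0^* \in \cC^{\alpha+1}$, I would Taylor-expand $D[x^* + \sqrt\eps z\,|\,x^*]$ in powers of $\sqrt\eps$: the quadratic term $\tfrac12 z^\top \nabla^2\varphi_0^*(x^*) z$ is the leading order, followed by a cubic-in-$z$ term of order $\eps^{1/2}$ (present when $\alpha > 2$) and a remainder controlled by the $\cC^{\alpha+1}$ modulus, of order $\eps^{(\alpha-1)/2}$. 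Exponentiating and expanding $e^{-(\text{correction})}$, the numerator's leading term $\sqrt\eps \int z\, e^{-\frac12 z^\top \nabla^2\varphi_0^*(x^*) z}\,\dd z$ vanishes by symmetry; the first surviving contribution comes from multiplying $z$ against the (odd, cubic) first correction, giving a term of size $\eps$, i.e. $\|\bar y^x - T_0(x)\| \lesssim \eps$ when $\alpha > 2$. When $\alpha \le 2$ the correction is only Hölder of order $\alpha - 1$ and one gets $\|\bar y^x - T_0(x)\| \lesssim \eps^{\alpha/2}$ after the $\sqrt\eps$ prefactor; squaring yields $\|\bar y^x - T_0(x)\|^2 \lesssim \eps^{\alpha \wedge 2}$. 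This machinery is exactly the "Laplace's method" toolkit the paper defers to \cref{sec: laplace_method}, so I would invoke those estimates (in particular whatever lemma gives \cref{laplace_z}) rather than redo the bookkeeping.

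The main obstacle is making the Laplace expansion \emph{uniform in $x \in \supp(P)$} and tracking the precise $\eps$-exponent as a function of the Hölder exponent $\alpha$ — in particular, correctly identifying that the odd-order symmetry kills the naive $O(\eps^{1/2})$ term so that the bound saturates at $\eps^{\alpha\wedge 2}$ rather than $\eps^{\alpha - 1}$. Uniformity follows because every ingredient — the Hessian bounds $\mu I \preceq \nabla^2\varphi_0 \preceq LI$, equivalently $L^{-1} I \preceq \nabla^2\varphi_0^* \preceq \mu^{-1} I$, the global quadratic bounds~\cref{quad_bounds}, and the $\cC^{\alpha+1}$ norm of $\varphi_0^*$ — is controlled by constants independent of $x$; the compactness of $\Omega$ from~\textbf{(A1)} then lets one absorb everything into a single constant. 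The exponent bookkeeping is routine once one is careful that the cubic term in the Taylor expansion of $D$ is odd in $z$ and thus contributes at order $\eps$ to the mean, while the genuinely non-smooth remainder (when $\alpha < 3$) is what caps the rate.
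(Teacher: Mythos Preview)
Your proposal is correct and follows essentially the same route as the paper: both arguments write $\bar y^x - x^* = \int (y-x^*)\,q_\eps^x(y)\,\dd y$, localize to a ball of radius $\tau\sqrt\eps$ using the quadratic bounds~\cref{quad_bounds}, exploit the symmetry of the leading Gaussian to kill the $O(\sqrt\eps)$ term, and then control the remainder using $|e^{-R/\eps}-1|\lesssim \eps^{-1}\|y-x^*\|^{1+\alpha}$ from the $\cC^{\alpha+1}$ Taylor bound. The only cosmetic difference is that the paper simply reduces to the case $\alpha\in(1,2]$ at the outset (which already gives $\eps^{\alpha\wedge 2}$) rather than separately arguing via oddness of the cubic term when $\alpha>2$.
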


With these lemmas in hand, we can complete the proof.
\begin{proof}[Proof of \cref{thm:one_sample}]
We may assume $\eps_0 \leq 1$.
Since $e^t -1 \leq 2t$ for $t \in [0, 1]$, \cref{lem:mean} implies that as long as $\eps_0$ is sufficiently small, for $\eps \leq \eps_0$,
\begin{equation*}
e^{\frac 1{4\eps} \|\bar y^x - T_0(x)\|^2} - 1 \lesssim \eps^{(\alpha - 1) \wedge 1} \leq \ep^{(\bar{\alpha} - 1)/2}\,,
\end{equation*}
where the last inequality holds for $\alpha \geq 1$ and $\eps \leq 1$. Combining this fact with \cref{lem:exp_bound}, we obtain that for any $h: \RR^d \to \RR^d$ and $a \geq C\eps$,
\begin{equation*}
\iint (e^{h(x)^\top(y - T_0(x))  - a \|h(x)\|^2} -1)q_\eps^x(y) \dd y \dd P(x)  \lesssim \ep^{(\bar{\alpha} - 1)/2}\,.
\end{equation*}

For a sufficiently small constant $\eps_0$, the interval $[C \eps, 1]$ is non-empty for $\eps \leq \eps_0$, so combining this fact with \cref{prop: main} yields that for $a \in [C \eps, 1]$ and $\eps \leq \eps_0$,
\begin{multline}\label{eq:dual_with_h}
\E \sup_{h: \Omega \to \RR^d} \iint \left(h(x)^\top(y - T_0(x))  - a \|h(x)\|^2\right) \dd \pi_{\eps, n}(x, y) \lesssim \eps I_0 + \eps^{(\bar \alpha - 1)/2} 
+ \eps^{-d/2} \log(n) n^{-1/2}\,.
\end{multline}

If we pick $h(x) = \frac 1{2 a} (T_{\eps, n}(x) - T_0(x))$, the integral on the left side equals
\begin{multline}\label{eq:h_choice}
\frac{1}{2 a} \E \iint \left((T_{\eps, n}(x) - T_0(x))^\top(y - T_0(x))  - \frac 12 \|T_{\eps, n}(x) - T_0(x)\|^2\right) \dd \pi_{\eps, n}(x, y)
\end{multline}
By definition, $T_{\eps, n}(x) = \int y \dd\pi_{\eps, n}^x(y)$, so disintegrating $\pi_{\eps, n}(x, y)$ and recalling that the first marginal of $\pi_{\eps, n}$ is $P$ yields
\begin{multline*}
\iint \left(T_{\eps, n}(x) - T_0(x))^\top(y - T_0(x)) - \frac 12 \|T_{\eps, n}(x) - T_0(x)\|^2\right)\dd \pi_{\eps, n}(x, y) \\ = \int \frac 12 \|T_{\eps, n}(x) - T_0(x)\|^2 \dd P(x) = \frac 12 \|T_{\eps, n} - T_0\|_{L^2(P)}^2\,.
\end{multline*}
Combining this with \cref{eq:dual_with_h} and \cref{eq:h_choice} and picking $a = C\eps$ yields
\begin{equation*}
\E \|T_{\eps, n} - T_0\|_{L^2(P)}^2 \lesssim \eps^2 I_0 + \eps^{(\bar \alpha + 1)/2} + \eps^{1-d/2} \log(n) n^{-1/2}\,,
\end{equation*}
as desired.
\end{proof}

As a corollary to \cref{thm:one_sample}, we have the following population-level estimate between $T_\eps$ and $T_0$, which is potentially of independent interest.
\begin{corollary}\label{cor: approx_error_bound}
Assume \textbf{(A1)} to \textbf{(A3)}, then
\begin{align}
\|T_\eps - T_0\|^2_{L^2(P)} = \| \nabla f_\eps - \nabla f_0\|^2_{L^2(P)} \lesssim \eps^2I_0(P,Q) + \eps^{(\bar{\alpha}+1)/2},
\end{align}
where $\bar{\alpha} = 3 \wedge \alpha$.
\end{corollary}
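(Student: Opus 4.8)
The plan is to rerun the proof of \cref{thm:one_sample} essentially verbatim, but with the empirical measure $Q_n$ replaced everywhere by the population measure $Q$. This substitution annihilates every fluctuation term in the argument and leaves only the two deterministic bias contributions, which are exactly $\eps^2 I_0(P,Q)$ and $\eps^{(\bar\alpha+1)/2}$. The norm identity is immediate: by \cref{prop: ent_brenier} we have $T_\eps = \mathrm{Id} - \nabla f_\eps$, and by the Remark following \cref{eq: ot_dual} we have $T_0 = \nabla\varphi_0 = \mathrm{Id} - \nabla f_0$, so $T_\eps - T_0 = \nabla f_0 - \nabla f_\eps$ and the two squared $L^2(P)$ norms coincide.

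Next I would re-derive \cref{prop: main} with $(P,Q)$ in place of $(P,Q_n)$, tracking the three error terms. The term $\Delta_1$ loses its fluctuation piece $\int(g_0/\eps - \log q)(\rd Q - \rd Q_n)$, which is now $\int(g_0/\eps - \log q)(\rd Q - \rd Q) = 0$, and reduces to the pure Laplace term $\int \log\!\big(Z_\eps(x)\,J(\nabla^2\varphi_0^*(x^*))\big)\dd P(x) \lesssim \eps^{(\bar\alpha-1)/2}$ by \cref{laplace_z}. The term $\Delta_2 = \eps^{-1}\big(S_\eps(P,Q) - S_\eps(P,Q)\big)$ vanishes. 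The term $\Delta_3$, being a supremum over $h$ of integrals of a probability density against $\rd Q_n - \rd Q = 0$, also vanishes identically. Applying \cref{thm: faster_thm} exactly as before, and writing $j_h(x,y) = h(x)^\top(y - T_0(x)) - a\|h(x)\|^2$, this yields the deterministic estimate
\[
\sup_{h:\Omega\to\RR^d}\ \iint j_h\dd\pi_\eps - \iint \big(e^{j_h(x,y)}-1\big)\,q_\eps^x(y)\dd y\dd P(x)\ \lesssim\ \eps I_0(P,Q) + \eps^{(\bar\alpha-1)/2}
\]
for $a\in[L\eps,1]$ and $\eps\le 1$.

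Finally I would invoke \cref{lem:subg,lem:exp_bound,lem:mean}, which concern only the Gaussian-type conditional densities $q_\eps^x$ and are indifferent to whether the second marginal is $Q$ or $Q_n$: exactly as in the proof of \cref{thm:one_sample}, for $a \ge C\eps$ the second integral above is $\lesssim \eps^{(\bar\alpha-1)/2}$, whence $\sup_h \iint j_h\dd\pi_\eps \lesssim \eps I_0(P,Q) + \eps^{(\bar\alpha-1)/2}$. Specializing to $h(x) = \tfrac{1}{2a}\big(T_\eps(x) - T_0(x)\big)$ and disintegrating $\pi_\eps$ along its first marginal $P$ — using $T_\eps(x) = \int y\dd\pi_\eps^x(y)$, which holds for $P$-a.e.\ $x$ — collapses the left side to $\tfrac{1}{4a}\|T_\eps - T_0\|_{L^2(P)}^2$; taking $a = C\eps$ gives $\|T_\eps - T_0\|_{L^2(P)}^2 \lesssim \eps^2 I_0(P,Q) + \eps^{(\bar\alpha+1)/2}$. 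Since every ingredient is already in hand, there is no genuine obstacle here; the only points requiring care are bookkeeping ones — verifying that the substitution $Q_n\rightsquigarrow Q$ kills $\Delta_2$, $\Delta_3$, and the fluctuation part of $\Delta_1$ without disturbing the Laplace estimate, and checking that the choice $a = C\eps$ is admissible once $\eps$ is small enough that $[C\eps,1]\neq\emptyset$. (As usual the statement is only informative for small $\eps$; the remaining range is absorbed into the implied constant, the left side being bounded on the compact domain $\Omega$.)
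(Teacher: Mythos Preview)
Your proposal is correct and matches the paper's intended approach: the paper presents this as an immediate corollary of \cref{thm:one_sample} without writing a separate proof, and the implicit argument is precisely the one you give---rerun the proof of \cref{prop: main} and \cref{thm:one_sample} with $Q$ in place of $Q_n$, so that $\Delta_2$, $\Delta_3$, and the fluctuation part of $\Delta_1$ all vanish, leaving only the Laplace contribution $\eps^{(\bar\alpha-1)/2}$ and the $\eps I_0$ term before the final multiplication by $a\asymp\eps$.
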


\section{Two-sample estimates}\label{sec: two_samp}
We now turn our attention to the two-sample case.
Let $\pi_{\eps, (n, n)}$ be the optimal entropic plan between $P_n$ and $Q_n$ and $(f_{\eps, (n, n)}, g_{\eps, (n, n)})$ the corresponding entropic potentials.
We aim to show that
\begin{equation*}
\E \|T_{\eps, (n, n)} - T_{\eps, n}\|_{L^2(P)}^2 \lesssim  (\eps^{-1} + \eps^{-d/2}) \log(n) n^{-1/2}\,.
\end{equation*}

As in \cref{sec: one_samp}, we proceed via duality arguments, but our task is considerably simplified by the fact that the measure $Q_n$ remains fixed in passing from $T_{\eps, (n, n)}$ to $T_{\eps, n}$.
Let us write
\begin{equation*}
\gamma(x, y) = e^{\frac 1 \eps (f_{\eps, (n, n)}(x) + g_{\eps, (n, n)}(y) - \frac 12 \|x - y\|^2)} = \frac{e^{\frac 1 \eps (g_{\eps, (n, n)}(y) - \frac 12 \|x - y\|^2)}}{\frac 1n \sum_{i=1}^n e^{\frac 1 \eps (g_{\eps, (n, n)}(Y_i) - \frac 12 \|x - Y_i\|^2)}}
\end{equation*}
for the $P_n \otimes Q_n$ density of $\pi_{\eps, (n, n)}$, where the second equality holds $P_n \otimes Q_n$ almost everywhere and furnishes an extension of $\gamma$ to all $x \in \RR^d$.

We employ the following analogue of \cref{prop: main}, which does not require the full force of assumptions \textbf{(A1)} to \textbf{(A3)}.
\begin{proposition}\label{prop:two_sample_main}
The support of $P$ and $Q$ lies in $\Omega$, then
\begin{align*}
\E \Big\{\sup_{\chi: \Omega \times \Omega \to \RR} \iint \chi(x, y) \dd \pi_{\eps, n}(x, y) &- \iint (e^{\chi(x, y)} - 1) \gamma(x, y) \dd P(x) \dd Q_n(y)\Big\} \\
&\lesssim (\eps^{-1} + \eps^{-d/2}) \log(n) n^{-1/2}\,,
\end{align*}
where the supremum is taken over all $\chi \in L^1(\pi_{\ep, n}).$
\end{proposition}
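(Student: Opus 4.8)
The plan is to follow the template of the proof of \cref{prop: main}, which simplifies considerably here because $Q_n$ is held fixed, so no Laplace-method or second-order expansion is needed. I would apply \cref{prop:new_dual} to the pair $(P, Q_n)$, whose optimal entropic plan is $\pi_{\eps, n}$, with the test function
\begin{equation*}
\eta(x, y) \defeq \eps\, \chi(x, y) + \eps \log \gamma(x, y) + \tfrac 12 \|x - y\|^2 = \eps\, \chi(x, y) + f_{\eps, (n, n)}(x) + g_{\eps, (n, n)}(y)\,,
\end{equation*}
which lies in $L^1(\pi_{\eps, n})$ (it suffices to take the supremum over bounded $\chi$). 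With this choice $e^{(\eta(x, y) - \frac 12 \|x - y\|^2)/\eps} = e^{\chi(x, y)}\gamma(x, y)$; using that $\pi_{\eps, n}$ has marginals $P$ and $Q_n$ and that $\int \gamma(x, y)\dd Q_n(y) = 1$ for every $x$ (which follows from \cref{dual_opt} applied to $(P_n, Q_n)$, and also gives $\iint \gamma\dd P\dd Q_n = 1$), \cref{prop:new_dual} collapses to
\begin{equation*}
\eps \Big[\int \chi\dd\pi_{\eps, n} - \iint (e^{\chi} - 1)\gamma\dd P\dd Q_n\Big] \le S_\eps(P, Q_n) - \int f_{\eps, (n, n)}\dd P - \int g_{\eps, (n, n)}\dd Q_n\,.
\end{equation*}

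Next I would remove the potentials from the right-hand side using \cref{eq: dual_eot} for $(P_n, Q_n)$: evaluated at the optimizers and using $\iint \gamma\dd P_n\dd Q_n = 1$, it reads $S_\eps(P_n, Q_n) = \int f_{\eps, (n, n)}\dd P_n + \int g_{\eps, (n, n)}\dd Q_n$. Substituting $\int g_{\eps, (n, n)}\dd Q_n = S_\eps(P_n, Q_n) - \int f_{\eps, (n, n)}\dd P_n$, dividing by $\eps$, taking the supremum over $\chi$ (now absent from the right), and taking expectations yields
\begin{equation*}
\E \sup_{\chi : \Omega \times \Omega \to \RR} \Big[\int \chi\dd\pi_{\eps, n} - \iint (e^{\chi} - 1)\gamma\dd P\dd Q_n\Big] \le \eps^{-1}\E\big|S_\eps(P, Q_n) - S_\eps(P_n, Q_n)\big| + \eps^{-1}\E\Big|\int f_{\eps, (n, n)}\dd(P - P_n)\Big|\,.
\end{equation*}
The first term on the right is a stability estimate for the Sinkhorn divergence under perturbation of one argument with the other fixed at a measure supported in $\Omega$; this is precisely the type of bound established in the one-sample analysis, and a version of \cref{cor: emp_proc_special} gives $\eps^{-1}\E|S_\eps(P, Q_n) - S_\eps(P_n, Q_n)| \lesssim (\eps^{-1} + \eps^{-d'/2})\log(n)\, n^{-1/2}$. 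For the second term I would pass to $\sup_{f}|\int f\dd(P - P_n)|$ over the class $\cF_\eps$ of $\eps$-entropic potentials (in the first variable) arising from pairs of measures supported in $\Omega$, and control this by empirical process theory.

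The main obstacle is this last empirical-process bound, since $f_{\eps, (n, n)}$ is itself random and $\cF_\eps$ must be handled uniformly with the correct dependence on $\eps$. The functions in $\cF_\eps$ are smooth — $\tfrac 12 \|\cdot\|^2 - f$ is convex with Hessian in $[0, C\eps^{-1}I]$, and the $k$-th derivatives of $f$ are $O(\eps^{-(k-1)})$ on $\Omega$ — but their curvature blows up as $\eps \to 0$, so chaining over a $C^2$ ball of radius $\eps^{-1}$ is too lossy. The resolution is to exploit the intrinsic length scale $\sqrt\eps$: for small $d$ the class is already Donsker (e.g.\ via Bronshtein's entropy bound for convex Lipschitz functions), giving $\E\sup_{f \in \cF_\eps}|\int f\dd(P - P_n)| \lesssim n^{-1/2}$, and for large $d$, rescaling space by $\sqrt\eps$ turns each $f$ into a uniformly $C^k$-bounded function on a domain of diameter $O(\eps^{-1/2})$, yielding $\E\sup_{f \in \cF_\eps}|\int f\dd(P - P_n)| \lesssim \eps^{-d/4}\, n^{-1/2}$ up to logarithmic factors. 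In either case $\eps^{-1}$ times this is absorbed into $(\eps^{-1} + \eps^{-d'/2})\log(n)\, n^{-1/2}$, since $1 + d/4 \le \lceil d/2\rceil = d'/2$ once $d \ge 4$. Combining with the Sinkhorn-stability bound gives the claim.
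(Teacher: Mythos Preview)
Your first two moves---applying \cref{prop:new_dual} with $\eta = \eps\chi + f_{\eps,(n,n)} + g_{\eps,(n,n)}$ and reducing the right-hand side to an empirical-process bound on $f_{\eps,(n,n)}$---are exactly what the paper does (the paper states a slight generalization, \cref{prop: delta_dev}, then takes $\tilde P=P_n$). The detour through $S_\eps(P_n,Q_n)$ is unnecessary: since $\int e^{(f_{\eps,n}+g_{\eps,n}-c)/\eps}\dd Q_n\equiv 1$, plugging the $(P,Q_n)$-potentials into the dual for $(P_n,Q_n)$ gives $S_\eps(P,Q_n)-\int f_{\eps,(n,n)}\dd P-\int g_{\eps,(n,n)}\dd Q_n\le \int(f_{\eps,n}-f_{\eps,(n,n)})(\rd P-\rd P_n)$, and $\E\int f_{\eps,n}(\rd P-\rd P_n)=0$ by independence. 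Either route leaves only $\eps^{-1}\E\int f_{\eps,(n,n)}(\rd P_n-\rd P)$.

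The genuine gap is your empirical-process step. The rescaling claim is false: if the $k$-th derivatives of $f$ are $O(\eps^{1-k})$, then those of $\tilde f(u)=f(\sqrt\eps\,u)$ are $O(\eps^{k/2}\cdot\eps^{1-k})=O(\eps^{1-k/2})$, which is uniformly bounded only for $k\le 2$. With merely $C^2$ control on a domain of volume $\asymp\eps^{-d/2}$ you get $\log N(\tau)\lesssim\eps^{-d/2}\tau^{-d/2}$, and the Dudley integral $\int\tau^{-d/4}\dd\tau$ diverges for every $d>4$, so the claimed $\eps^{-d/4}n^{-1/2}$ does not follow (and Bronshtein fails for the same reason once $d\ge 5$). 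The paper's resolution is much simpler and avoids rescaling altogether: by \citet[Proposition~1]{GenChiBac18}, any entropic potential between compactly supported measures satisfies $\|f\|_{\cC^s}\lesssim 1+\eps^{1-s}$ for \emph{every} integer $s\ge 1$. Choosing $s=\lceil d/2\rceil=d'/2$ and chaining over the $\cC^s$-ball of that radius (\cref{empirical_process}) gives $\E\sup_f\int f(\rd P_n-\rd P)\lesssim(1+\eps^{1-d'/2})\log(n)\,n^{-1/2}$ directly; multiplying by $\eps^{-1}$ yields the stated bound.
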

The proof of \cref{thm:two_sample} is now straightforward.
\begin{proof}
As in the proof of \cref{thm:one_sample}, consider
\begin{equation*}
\chi(x, y) = h(x)^\top(y - T_{\eps, (n, n)}(x)) - a \|h(x)\|^2
\end{equation*}
for $h$ and $a$ to be specified.
By definition of $T_{\eps, (n,n)}$, we have
\begin{multline*}
\int h(x)^\top(y - T_{\eps, (n, n)}(x)) \gamma(x, y) \dd Q_n(y) = h(x)^\top \left(\int y \gamma(x,y)\dd Q_n(y) - T_{\eps,(n,n)}(x)\right)\\
= h(x)^\top \left(\frac{\frac 1n \sum_{i=1}^n  Y_i e^{\frac 1 \eps(g_{\eps, (n,n)}(Y_i) - c(x, Y_i))}}{\frac 1n \sum_{i=1}^n e^{\frac 1 \eps(g_{\eps, (n, n)}(Y_i) - c(x, Y_i))}}  - T_{\eps, (n, n)}(x)\right) = 0
\end{multline*}
for all $x \in \RR^d$.
Moreover, since $\Omega$ is compact, by the Cauchy-Schwarz inequality, there exists a constant $C$ such that
\begin{equation*}
|h(x)^\top (y - T_{\eps, (n, n)}(x))| \leq C \|h(x)\| \quad \forall y \in \Omega\,.
\end{equation*}
Hoeffding's inequality therefore implies that if $a \geq C^2/2$, then this choice of $\chi$ satisfies
\begin{equation*}
\iint (e^{\chi(x, y)} - 1) \gamma(x, y) \dd Q_n(y) \dd P(x) \leq 0\,.
\end{equation*}

Choosing $h(x) = \frac{1}{2a} (T_{\eps, n}(x) - T_{\eps, (n, n)}(x))$, we conclude as in the proof of \cref{thm:one_sample} that for $\eps \leq 1$,
\begin{equation*}
\frac{1}{4 a} \E \|T_{\eps, n} - T_{\eps, (n, n)}\|_{L^2(P)}^2 \lesssim (\eps^{-1} + \eps^{-d/2}) \log(n) n^{-1/2} \lesssim \eps^{-d/2} \log(n) n^{-1/2}\,,
\end{equation*}
and picking $a$ to be a sufficiently large constant yields the claim.
\end{proof}


%

 \section{Adaptive estimation}\label{sec: adaptive_estimation}
In Theorems \ref{thm:main} and \ref{thm:one_sample}, the optimal choice of the regularization parameter $\eps$ depends on $n$, $d$, and $\alpha$.
Although the number of samples and dimension are obviously known to the practitioner, the smoothness of the transport map is often not known \textit{a priori}. 
However, Lepski's method~\citep[see][]{birge2001alternative} can be used to obtain a data-driven method of choosing $\eps$, which gives rise to an estimator that adapts to the unknown smoothness parameter $\alpha$.
 
For notational convenience, for any $\alpha > 1$, let $s := \alpha + 1$ be the smoothness of the conjugate Brenier potential $\varphi_0^*$.
We assume that $s \in [2+\iota, 4]$ for some $\iota > 0$ sufficiently small and fixed.
Let $\cS$ be the following discrete subset
\begin{align*}
    \cS \defeq \{2+\iota = s_{\min} = s_1 < s_2 < \cdots < s_N = s_{\max} = 4\}\,,
\end{align*}
where $s_j - s_{j-1} \asymp (\log n)^{-1}$, and set
\begin{align}\label{eq: lepski_eps}
    \eps_s = (n/\log n)^{-1/2(d+s)}\,, \quad \psi_n(s) = (\eps_s)^s = (n/\log n)^{-s/2(d+s)}\,.
\end{align}
To calibrate our choice of $\eps$, we rely on sample splitting.
Let $\DD := \{(X_i,Y_i)\}_{i=1}^n$ denote our initial dataset, and let $\DD'$ denote an independent copy of $\DD$. Denote by $P_n'$ and $Q_n'$ the empirical measures arising from $\DD'$. Our choice of smoothness parameter is given by the following rule:
\begin{align}\label{eq: lepski_rule}
    \hat{s} \defeq \max\{ s \in \cS \ : \ \| \hat{T}_{\eps_{s}} - \hat{T}_{\eps_{s'}}\|_{L^2(P_n')}^2 \leq K \psi_n(s')\,, \forall \ s' \leq s, s' \in \cS \}\,,
\end{align}
for a positive constant $K$.
The following theorem shows that choosing $\eps = \eps_{\hat s}$ gives rise to an adaptive estimator.
\begin{theorem}\label{thm: adaptive_estimation}
Suppose \textbf{(A1)} to \textbf{(A3)} holds, with $X_1,\ldots,X_n \sim P$ and $Y_1,\ldots,Y_n \sim Q$, resulting in $\mathbb{D} = \{(X_i,Y_i)\}_{i=1}^{\lfloor n / 2 \rfloor}$ and a hold-out set $\mathbb{D}'$. Suppose $\hat{s}$ is chosen according to \cref{eq: lepski_rule} for $K$ sufficiently large, with $\eps = \eps_{\hat s}$ chosen as in \cref{eq: lepski_eps}. The resulting estimator $\hat{T}_{\eps_{\hat{s}}}$ exhibits a risk in $L^2(P)$ that matches \cref{thm:main} up to log factors.
\end{theorem}
The proof of \cref{thm: adaptive_estimation} uses standard ideas and is deferred to \Cref{sec: adaptivity}.

\section{Computational aspects}\label{sec: exp}
Our reason for studying the entropic map as an optimal transport map estimator arises from its strong computational benefits, which are a consequence of the efficiency of Sinkhorn's algorithm for entropic optimal transport~\citep[see][]{PeyCut19}. In this section, we compare the computational complexity of the entropic map to the estimators of \cite{hutter2021minimax}, \cite{deb2021rates}, and \cite{manole2021plugin} in the two-sample setting. Finally, we perform several experiments that demonstrate the computational advantages of our procedure. Throughout this section, we use $\tilde{O}$ to hide poly-logarithmic factors in the sample size $n$.
\subsection{Estimator complexities from prior work}\label{sec: prior_comp}
We first describe the wavelet-based estimator proposed by \cite{hutter2021minimax}.
Recall that this estimator is minimax optimal for all $\alpha > 1$.
The implementation of this estimator requires various discretization and approximation schemes.
The authors of that work use a numerical implementation of the Daubechies wavelets to approximate the optimal Brenier potential, and then compute its convex conjugate by means of a discrete Legendre transform on a discrete grid. The gradient of the resulting potential is then obtained using finite differences, and this is extended to data outside the grid by linear interpolation.
Though computing this estimator takes time that scales only linearly in the sample size $n$, the main bottleneck of this approach from a computational standpoint is the computation of the Legendre transform on the grid, which requires at least $c N^d$ operations, where $N$ denotes the resolution of the grid. Since this resolution needs to be chosen fine enough to be negligible, the exponential dependence in $d$ makes this approach prohibitively expensive in most applications.

Another estimator recently analyzed in the literature is the ``1-Nearest Neighbor'' estimator, which we denote by $\hat{T}^{\text{1NN}}_{(n,n)}$ \citep{manole2021plugin}, which achieves the minimax rate when $T_0$ is bi-Lipschitz (i.e., $\alpha = 1$ and \textbf{(A3)} is satisfied) over a compact domain $\Omega$.
The estimator takes the form
\begin{align}\label{eq: est_1nn}
\hat{T}^{\text{1NN}}_{(n,n)}(x) = \sum_{i,j=1}^n (n\hat{\pi}_{ij})\bm{1}_{V_i}(x)Y_j,
\end{align}
where $\bm{1}$ is the indicator function for a set, and $(V_i)_{i=1}^n$ are the Voronoi regions generated by $(X_i)_{i=1}^n$, i.e.
$$ V_i = \{ x \in \Omega \ : \ \|x - X_i\| \leq \|x - X_j\|, \ \forall j \neq i\}, $$
and $\hat{\pi}$ is the optimal coupling that solves \cref{eq: kant_p} when the measures are the empirical measures $P_n$ and $Q_n$. Solving for $\hat{\pi}$ can be done through the Hungarian algorithm, and has time complexity ${O}(n^3)$.
However, unlike the wavelet estimator described above, computing this estimator does not require constructing a grid whose size scales exponentially with dimension.

For the $\alpha > 1$ case, both \cite{manole2021plugin} and \cite{deb2021rates} propose estimators based on density estimation. For these approaches, the idea is to construct nonparametric density estimates of the measures $P$ and $Q$, resample points from these densities, and finally perform the appropriate matching using the Hungarian algorithm once again. Though tractable in low dimensions, this approach is limited by the difficulty of sampling from nonparametric density estimates, which typically requires time scaling exponentially in the dimension $d$. 


In short, prior estimators proposed in the literature either have runtime scaling exponentially in $d$ (in the case of the wavelet estimator or estimators based on nonparametric density estimation) or cubicly in $n$ (in the case of the 1NN estimator).
By contrast, in the following section, we show that our estimator can be computed in nearly $O(n^2)$ time.

\subsection{Computational complexity of the Entropic Map}
We now turn to the computational analysis of our estimator, which has the closed-form representation
\begin{equation}\label{eq: tnn_est}
\hat{T}_{\eps, (n, n)}(x) = \frac{ \sum_{i=1}^n Y_i e^{\frac 1 \eps(g_{\eps, (n,n)}(Y_i) - \frac 12 \|x -Y_i \|^2)}}{ \sum_{i=1}^n e^{\frac 1 \eps(g_{\eps, (n,n)}(Y_i) - \frac 12 \|x -Y_i \|^2)}}\,.
\end{equation}
The computational burden of our estimator falls on computing the optimal entropic potential evaluated at the data $g_{\eps,(n,n)}(Y_i)$. Indeed, once we have this potential, it is clear that the remainder of \Cref{eq: tnn_est} can be computed in $O(n)$ time.

The leading approach to compute optimal entropic potentials in practice is \emph{Sinkhorn's algorithm}~\citep{PeyCut19, Sin67}, an alternating minimization algorithm that computes approximations of the entropic potentials by iteratively updating $f$ and $g$ so that they satisfy one of the two dual optimality conditions given in~\cref{dual_opt}.
Explicitly, defining $f^{(0)} = 0$, Sinkhorn's algorithm performs the updates
\begin{align*}
	g^{(k)}(y) &= - \eps \log \frac 1n \sum_{i=1}^n e^{\frac 1 \eps (f^{(k)}(X_i) - \frac 12 \|X_i - y\|^2)} \\
	f^{(k+1)}(x) &= - \eps \log \frac 1n \sum_{j=1}^n e^{\frac 1 \eps (g^{(k)}(Y_j) - \frac 12 \|x - Y_j\|^2)}\,.
\end{align*}
until termination.
Since it is only necessary to compute $f^{(k)}$ and $g^{(k)}$ on the support of $P_n$ and $Q_n$, respectively, each iteration can be implemented in $O(n^2)$ time.

Note that this update rule guarantees that
\begin{equation*}
	\int e^{\frac 1 \eps (f^{(k)}(x) + g^{(k)}(y) - \frac 12 \|x - y\|^2)} \dd P_n(x) = 1
\end{equation*}
for all $y$ at each iteration.
By contrast, the second optimality condition in~\cref{dual_opt} is \emph{not} satisfied at each iteration, though~\cite{Sin67} showed that
\begin{equation*}
	\int e^{\frac 1 \eps (f^{(k)}(x) + g^{(k)}(y) - \frac 12 \|x - y\|^2)} \dd Q_n(y) \to 1
\end{equation*}
as $k \to \infty$, and therefore that the iterates of Sinkhorn's algorithm converge to optimal entropic potentials.

To analyze the running time of our estimator, we will leverage recent analyses of the convergence rate of Sinkhorn's algorithm~\citep{Cut13,AltWeeRig17,DvuGasKro18} to explicitly quantify the error incurred by terminating after a finite number of steps.
For $k \geq 0$, we consider the entropic map estimator obtained after $k$ iterates of Sinkhorn's algorithm:
\begin{align}\label{eq: est_k}
T^{(k)}(x) = \frac{ \sum_{i=1}^n Y_i e^{\frac 1 \eps(g^{(k)}(Y_i) - \frac 12 \|x -Y_i \|^2)}}{ \sum_{i=1}^n e^{\frac 1 \eps(g^{(k)}(Y_i) - \frac 12 \|x -Y_i \|^2)}}\,.
\end{align}
Despite the fact that $g^{(k)}$ is \emph{not} an entropic potential for the original problem, the following theorem shows that $T^{(k)}$ is nevertheless an acceptable estimator if $k$ is sufficiently large.

\begin{theorem}\label{thm: comp_guarantee}
Suppose  assumptions \textbf{(A1)} to \textbf{(A3)} hold, and we choose $\eps$ as in \Cref{thm:main}.
Then for any $k \gtrsim n^{7/(d+\bar{\alpha}+1)} \log n$,
\begin{equation*}
\E \|T^{(k)} - T_0\|^2_{L^2(P)} \lesssim (1 + I_0(P, Q)) n^{- \frac{(\bar \alpha + 1)}{2(d + \bar{\alpha} + 1)}} \log n\,,
\end{equation*}
where $\bar \alpha = 3 \wedge \alpha$.
In particular, an estimator achieving the same rate as the estimator in \cref{thm:main} can be computed in $\tilde O(n^{2 + 7/(d+\bar{\alpha}+1)}) = n^{2 + o_d(1)}$ time.
\end{theorem}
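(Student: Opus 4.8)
The plan is to show that for $k$ as large as in the statement the early-stopped map $T^{(k)}$ is uniformly close to the exact entropic map $T_{\eps,(n,n)} = \hat T$ of \cref{thm:main}, and then to quote that theorem. Concretely, by the triangle inequality and the fact that $P$ is supported in $\Omega$,
\begin{equation*}
\E\|T^{(k)} - T_0\|_{L^2(P)}^2 \lesssim \|T^{(k)} - T_{\eps,(n,n)}\|_{L^\infty(\Omega)}^2 + \E\|T_{\eps,(n,n)} - T_0\|_{L^2(P)}^2\,,
\end{equation*}
and the second term is already controlled by $(1+I_0(P,Q))\,n^{-(\bar\alpha+1)/(2(d'+\bar\alpha+1))}\log n$ via \cref{thm:main}. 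So the whole argument reduces to making $\|T^{(k)} - T_{\eps,(n,n)}\|_{L^\infty(\Omega)}^2$ no larger than this same rate.

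The second step is a deterministic stability estimate for the entropic map under perturbation of its potential. Comparing \cref{eq: tnn_est} and \cref{eq: est_k}, both $T^{(k)}(x)$ and $T_{\eps,(n,n)}(x)$ are softmax-weighted averages of the fixed points $Y_1,\dots,Y_n\in\Omega$, with log-weights $\eps^{-1}(g^{(k)}(Y_i) - \frac12\|x-Y_i\|^2)$ and $\eps^{-1}(g_{\eps,(n,n)}(Y_i) - \frac12\|x-Y_i\|^2)$ respectively; the quadratic terms and any additive-in-$x$ normalization cancel in the ratio, so only the differences $g^{(k)}(Y_i) - g_{\eps,(n,n)}(Y_i)$ matter, and only modulo a common additive constant. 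Writing $\delta$ for the oscillation of $(g^{(k)} - g_{\eps,(n,n)})$ over $\{Y_i\}$, the two weight vectors are related by multiplicative factors in $[e^{-\delta/\eps},e^{\delta/\eps}]$, hence the induced probability vectors are within total variation $\frac12(e^{2\delta/\eps}-1)$ of each other; since both averages lie in the convex hull of $\Omega$, this gives
\begin{equation*}
\|T^{(k)} - T_{\eps,(n,n)}\|_{L^\infty(\Omega)} \lesssim \mathrm{diam}(\Omega)\,\frac{\delta}{\eps} \qquad \text{whenever } \delta\le\eps\,.
\end{equation*}
It therefore suffices to force $\delta/\eps$ below a constant multiple of $n^{-(\bar\alpha+1)/(4(d'+\bar\alpha+1))}$.

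The third and main step is to turn the known convergence guarantees for Sinkhorn's algorithm \citep{Cut13,AltWeeRig17,DvuGasKro18} into such a bound on $\delta$. Those results control the violation of the marginal constraint that is \emph{not} enforced exactly by the update rule after $k$ iterations; one must upgrade this, in the compact, bounded-density regime where the potentials and all relevant condition numbers are $O(\mathrm{polylog}\,n)$, into control of the distance between $g^{(k)}$ and $g_{\eps,(n,n)}$ modulo additive constants. This can be done either through the $\eps^{-1}$-scale strong concavity of the semi-dual functional (so that $\tau$-suboptimality in dual value yields $\|g^{(k)} - g_{\eps,(n,n)}\|^2 \lesssim \eps\,\tau$ up to constants) or through contraction of the Sinkhorn iteration in Hilbert's projective metric; either route produces $\delta \lesssim \mathrm{poly}(\eps^{-1})\,k^{-1/2}$ up to $\log n$ factors. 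Substituting $\eps\asymp n^{-1/(d'+\bar\alpha+1)}$ and solving $\delta/\eps\lesssim n^{-(\bar\alpha+1)/(4(d'+\bar\alpha+1))}$ for $k$ yields a threshold of the form $k\gtrsim n^{c/(d'+\bar\alpha+1)}\log n$, and tracking the exponents (in the worst case over $\bar\alpha\in(1,3]$) gives $c=7$. Carrying out this marginal-violation-to-potential conversion with the correct power of $\eps^{-1}$, uniformly in $n$, is the part of the argument I expect to require the most care; everything else is the two-line softmax computation above together with \cref{thm:main}.

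Finally, the runtime claim is bookkeeping: each Sinkhorn iteration updates $f$ and $g$ on the $n$ support points in $O(n^2)$ operations, so $k\asymp n^{7/(d'+\bar\alpha+1)}\log n$ iterations take $\tilde{O}(n^{2+7/(d'+\bar\alpha+1)})$ time, and evaluating \cref{eq: est_k} at any point costs a further $O(n)$, which is absorbed. Since the exponent $7/(d'+\bar\alpha+1)\to 0$ as $d\to\infty$, the total is $n^{2+o_d(1)}$.
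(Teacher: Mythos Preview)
Your plan diverges from the paper's argument in two structural ways, and the combination leaves a real gap.

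First, the decomposition is different. You compare $T^{(k)}$ to the two-sample map $T_{\eps,(n,n)}$ in $L^\infty$ and then invoke \cref{thm:main}. The paper instead compares $T^{(k)}$ to the \emph{one-sample} map $T_{\eps,n}$ in $L^2(P)$ and invokes \cref{thm:one_sample}, exactly mirroring the proof of \cref{thm:main} itself. This matters because of the second, more important, difference.

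Second, and this is the gap: you try to control $T^{(k)}-T_{\eps,(n,n)}$ via the oscillation $\delta$ of $g^{(k)}-g_{\eps,(n,n)}$ on $\{Y_i\}$, and then ask how to turn the marginal-violation guarantees of \citet{AltWeeRig17} into a bound on that oscillation. You correctly flag this conversion as the delicate step, but you do not carry it out, and the sketch (``strong concavity of the semi-dual'' or ``Hilbert metric contraction'') does not obviously deliver the right power of $\eps^{-1}$ to land on the exponent $7$; that number is asserted rather than derived. The paper sidesteps this entirely with a clean observation: after any number of Sinkhorn steps, the pair $(f^{(k+1)},g^{(k)})$ are \emph{exact} entropic potentials for the pair $(\tilde P,Q_n)$, where $\tilde P$ is the measure on $\supp(P_n)$ with $\frac{\rd\tilde P}{\rd P_n}(x)=\int e^{(f^{(k)}(x)+g^{(k)}(y)-\frac12\|x-y\|^2)/\eps}\dd Q_n(y)$. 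Hence $T^{(k)}$ is the genuine entropic map from $\tilde P$ to $Q_n$, and one can apply the two-sample duality bound (\cref{prop: delta_dev}) with the perturbation $\delta=\tv{\tilde P}{P_n}$ appearing directly. The \citet{AltWeeRig17} result bounds precisely this total-variation marginal violation, yielding $\delta\le\eps^3$ once $k\gtrsim \delta^{-2}\eps^{-1}\log n=\eps^{-7}\log n$, and the requirement $\eps^{-1}\delta\lesssim\eps^2$ is what is needed for the error to match $\eps^{(\bar\alpha+1)/2}$. That is where the $7$ comes from, with no potential-convergence argument needed. Your softmax stability lemma is correct as far as it goes, but the route through potential oscillation is both harder and, as written, incomplete.
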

\begin{proof}
We begin by decomposing the error and applying \Cref{thm:one_sample}:
\begin{align*}
\E \|T^{(k)} - T_0\|^2_{L^2(P)} &\lesssim \E \|T^{(k)} - T_{\eps,n}\|^2_{L^2(P)} + \E \|T_{\eps,n} - T_0\|^2_{L^2(P)} \\
&\lesssim  \E \|T^{(k)} - T_{\eps,n}\|^2_{L^2(P)} + \eps^{1-d/2} \log(n) n^{-1/2} + \eps^{(\bar \alpha + 1)/2} + \eps^2 I_0(P, Q)\,.
\end{align*}

We proceed almost exactly as in \Cref{thm:two_sample}, and consider
\begin{equation*}
\chi(x,y) = h(x)^\top\pran{y - {T}^{(k)}(x)} - a\|h(x)\|^2,
\end{equation*}
for $h: \RR^d \to \RR^d$ and $a$ to be specified.
	For $x \in \RR^d$, $y \in \supp(Q_n)$, define
\begin{equation}
\tilde{\gamma}(x,y) = \frac{\exp\pran{\frac 1 \eps (g^{(k)}(y)-\tfrac12\|x-y\|^2)}}{\frac 1n \sum_{i=1}^n \exp\pran{\frac 1 \eps(g^{(k)}(Y_i)-\tfrac12\|x-Y_i\|^2)}}\,.
\end{equation}

By construction, $\int \tilde \gamma(x, y) \dd Q_n(y) = 1$ for all $x \in \RR^d$, and $T^{(k)}(x) = \int y \tilde \gamma(x, y) \dd Q_n(y)$.
Therefore, for any $h: \RR^d \to \RR^d$,
\begin{equation*}
\int h(x)^\top\pran{y - {T}^{(k)}(x)} \tilde{\gamma}(x,y)\dd Q_n(y) = 0
\end{equation*}
for all $x \in \R^d$. Moreover, since $\Omega$ is compact, there exists a constant $C$ such that
\begin{align*}
|h(x)^\top(y - T^{(k)}(x))|\leq C\|h(x)\| \quad \forall x, y \in \Omega\,.
\end{align*}
Hoeffding's inequality therefore implies that for $a$ sufficiently large, this choice of $\chi$ satisfies
\begin{align*}
\iint (e^{\chi(x,y)} - 1)\tilde{\gamma}(x,y)\dd Q_n \dd P(x) \leq 0\,.
\end{align*}

Now, define a probability measure $\tilde P$ with the same support as $P_n$ by setting
\begin{equation}
	\frac{\rd \tilde P(x)}{\rd P_n(x)} = \int e^{\frac 1 \eps (f^{(k)}(x) + g^{(k)}(y) - \frac 12 \|x - y\|^2)} \dd Q_n(y)\,,
\end{equation}
and let
\begin{equation}
	f^{(k+1)}(x) = - \eps \log \frac 1n \sum_{i=1}^n  \exp\pran{\eps^{-1}(g^{(k)}(Y_i)-\tfrac12\|x-Y_i\|^2)}\,.
\end{equation}
We claim that $\tilde \gamma (x, y) = \exp(\frac 1\eps (f^{(k+1)}(x) + g^{(k)}(y) - \frac 12 \|x - y\|^2)$ is the $\tilde P \otimes Q_n$ density of the optimal entropic plan between $\tilde P$ and $Q_n$.
We have already observed that $\int \tilde \gamma(x, y) \dd Q_n(y) = 1$ for all $x \in \RR^d$ by construction, so it suffices to note that for all $y \in \supp(Q_n)$,
\begin{align*}
	\int \tilde \gamma(x, y) \dd \tilde P(x) & = \int \frac{e^{\eps^{-1}(g^{(k)}(y) - \frac 12 \|x - y\|^2)}}{\int e^{\eps^{-1}(g^{(k)}(y') - \frac 12 \|x - y'\|^2)} \dd Q_n(y')} \dd \tilde P(x) \\
	& = \int \frac{e^{\eps^{-1}(f^{(k)}(x) + g^{(k)}(y) - \frac 12 \|x - y\|^2)}}{\int e^{\eps^{-1}(f^{(k)}(x) + g^{(k)}(y') - \frac 12 \|x - y'\|^2)} \dd Q_n(y')} \dd \tilde P(x) \\
	& = \int e^{\eps^{-1}(f^{(k)}(x) + g^{(k)}(y) - c(x, y))} \dd P_n(x) = 1\,.
\end{align*}
Therefore $(f^{(k+1)}, g^{(k)})$ satisfy \cref{dual_opt}, so $\tilde \gamma$ is indeed the $\tilde P \otimes Q_n$ density of the optimal entropic plan between the two measures.

Applying \Cref{prop: delta_dev}, we obtain for any $\ep\leq 1$
\begin{equation}
	\E \sup_{h: \RR^d \to \RR^d} \iint h(x)^\top\pran{y - {T}^{(k)}(x)} - a\|h(x)\|^2 \dd \pi_{\ep, n} \lesssim \ep^{-1} \delta + \ep^{-d/2} \log(n) n^{-1/2}\,,
\end{equation}
where $\delta := \tv{\tilde{P}}{P_n}$.
Choosing $h(x) = \frac{1}{2a}\pran{T_{\eps,n}(x) - {T}^{(k)}(x)}$, we conclude as in  \Cref{thm:two_sample}, resulting in
\begin{align*}
\E \|T^{(k)} - T_{\eps,n}\|^2_{L^2(P)} \lesssim \eps^{-1}\delta + \eps^{-d/2}\log(n)n^{-1/2}\,.
\end{align*}
All together, we have 
\begin{align*}
\E \|T^{(k)} - T_0\|^2_{L^2(P)} &\lesssim  \eps^{-1}\delta +  \eps^{-d/2}\log(n)n^{-1/2} + \eps^{(\bar \alpha + 1)/2} + \eps^2 I_0(P, Q)\,.
\end{align*}
The first term will be negligible if $\delta \lesssim \eps^{3}$.

By definition, $\tilde P$ is the first marginal of the joint distribution with density $e^{\frac 1 \eps(f^{(k)}(x) + g^{(k)}(y) - \frac 12 \|x - y\|^2)}$ with respect to $P_n \otimes Q_n$.
By \citet[Theorem 2]{AltWeeRig17}, if $k$ satisfies
\begin{equation*}
	k \gtrsim \delta^{-2} \log(n \cdot \max_{i, j} e^{\frac{1}{2\eps}\|x_i - y_j\|^2}) \gtrsim \delta^{-2} \eps^{-1} \log n\,,
\end{equation*}
then $\tv{\tilde P}{P_n} \leq \delta$.
Choosing $\delta = \eps^{3} \asymp n^{-3/(d+\bar{\alpha}+1)}$ yields the claim.
\end{proof}

\begin{remark}
		A surprising feature of \cref{thm: comp_guarantee} is that the necessary number of iterations decreases with the dimension $d$.
		This reflects the fact that when $d$ is large, the optimal choice of $\eps$ is also larger, and it is well established both theoretically and empirically that the performance of Sinkhorn's algorithm improves considerably as $\eps$ increases~\citep{AltWeeRig17,Cut13}.
\end{remark}

\subsection{Empirical performance}
We test two implementations of Sinkhorn's algorithm, one from the Python Optimal Transport (POT) library \citep{flamary2021pot}, and an implementation that uses the KeOps library optimized for GPUs. Both implementations employ log-domain stabilization to avoid numerical overflow issues arising from the small choice of $\eps$.

For simplicity, we employ the same experimental setup as~\cite{hutter2021minimax}.
We generate i.i.d.\ samples from a source distribution $P$, which we always take to be $[-1, 1]^d$, and from a target distribution $Q = (T_0)_\sharp P$, where we define $T_0: \RR^d \to \RR^d$ to be an optimal transport map obtained by applying a monotone scalar function coordinate-wise.\footnote{Note that any component-wise monotone function is the gradient of a convex function.}

In \Cref{fig: emp_ests_smalldim}, we visualize the output of our estimator in $d = 2$.
The figures depict the effect of evaluating the estimator $\hat{T}_\eps$ and the true map $T_0$ on additional test points $X_1', \dots, X'_m$ drawn i.i.d.\ from $P$.
\begin{figure}[t]
\centering
    \begin{subfigure}[H]{0.4\textwidth}            
            \includegraphics[width=\textwidth]{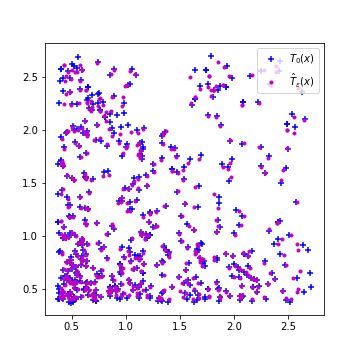}
            \caption{$T_0(x) = \exp(x)$ coordinate-wise}
    \end{subfigure}%
     \begin{subfigure}[H]{0.4\textwidth}
            \centering
            \includegraphics[width=\textwidth]{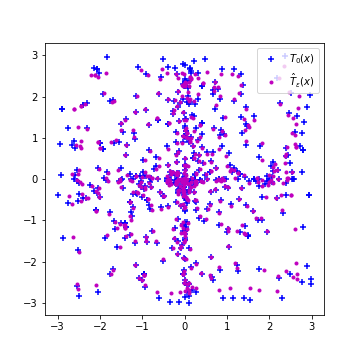}
            \caption{$T_0(x) = 3x^2\text{sign}(x)$ coordinate-wise}
    \end{subfigure}
    \caption{Visualization of $\hat{T}_\eps$ and $T_0(x)$ in 2 dimensions.}\label{fig: emp_ests_smalldim}
\end{figure}

\subsubsection{Comparison to a tractable minimax estimator}\label{sec: manole_estimator}
Among the previously discussed estimators, the 1-Nearest Neighbor estimator analyzed in \cite{manole2021plugin} is the most tractable, and the only one remotely comparable to our method. As discussed in \cref{sec: prior_comp}, this approach uses the Hungarian algorithm which has a runtime of $O(n^3)$. However, since it is not parallelizable, we compare its performance to the non-parallel CPU implementation of Sinkhorn's algorithm from the POT library.

We perform a simple experiment comparing our approach to theirs: let $P = [-1,1]^d$ and let $T_0(x) = \exp(x)$, acting coordinate-wise. We vary $d$ and $n$, and track runtime performance of both estimators, as well as the Mean Squared Error (MSE) of the map estimate\footnote{We calculate MSE by performing Monte Carlo integration over the space $[-1,1]^d$.}, averaged over 20 runs. For our estimator, we choose $\eps$ as suggested by \cref{thm:main}.
\begin{figure}[h]
\centering
    \begin{subfigure}[H]{0.5\textwidth}            
            \includegraphics[width=\textwidth]{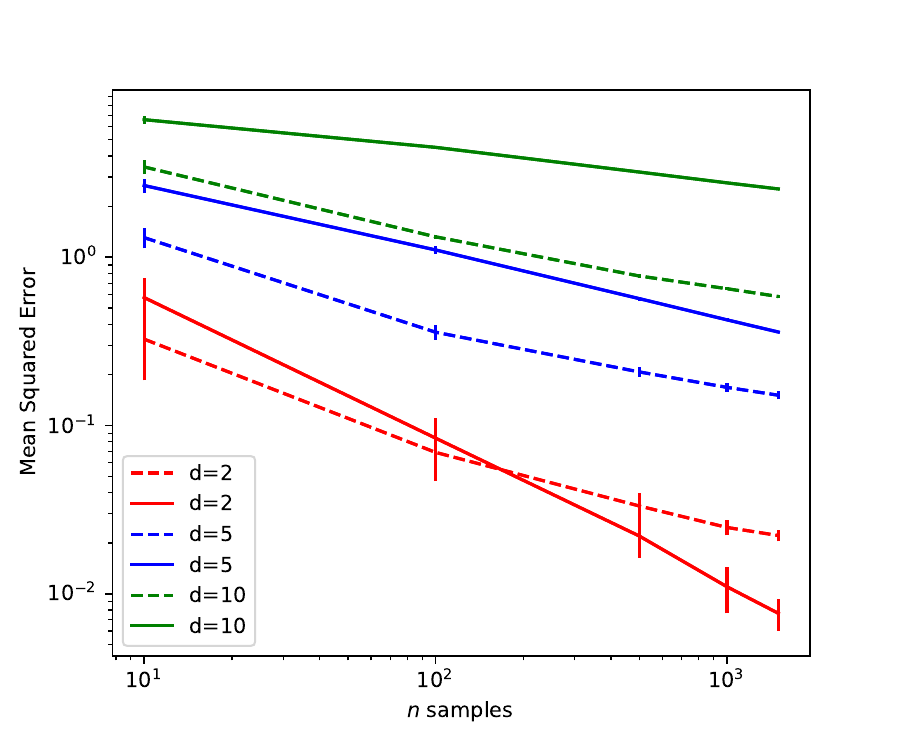}
            \caption{MSE comparison}
    \end{subfigure}%
    \begin{subfigure}[H]{0.5\textwidth}            
            \includegraphics[width=\textwidth]{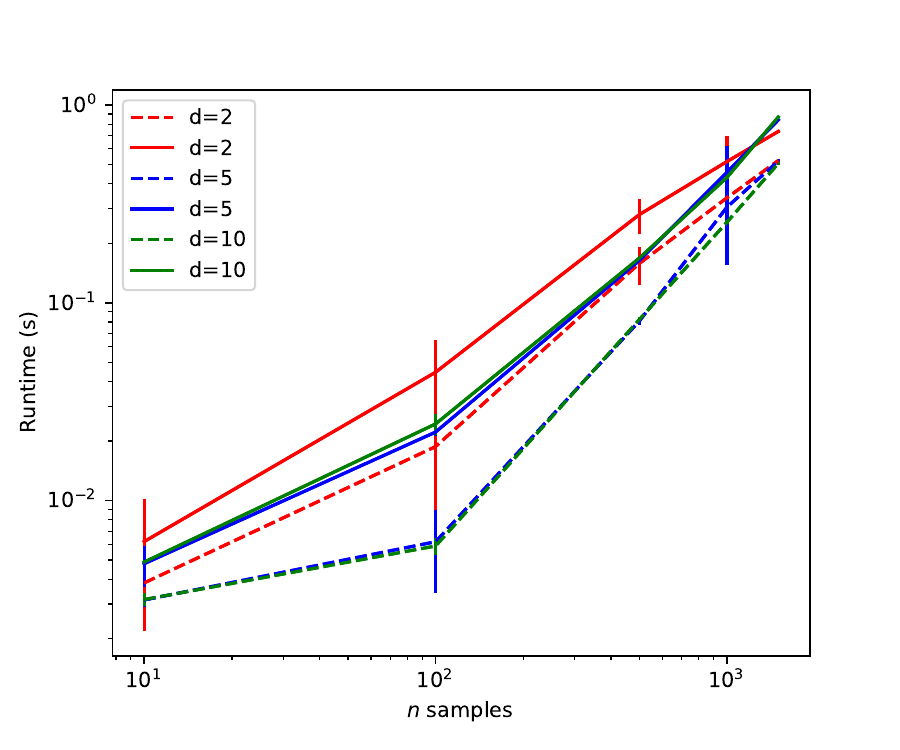}
            \caption{Runtime comparison}
    \end{subfigure}%
    \caption{Dashed lines are our estimator, solid lines are $\hat{T}^{\text{1NN}}$, and $T_0(x) = \exp(x)$}\label{fig: cpu_comp}
\end{figure}
We observe that in $d = 2$, the MSE of the two estimators are comparable, though our error deteriorates for large $n$, which reflects our slightly sub-optimal estimation rate. However, as $d$ increases to moderate dimensions, our estimator consistently outperforms $\hat{T}^{\text{1NN}}$ in both MSE and runtime with the choice of $\eps$ in \cref{thm:main}. For both estimators, the CPU runtime begins to become significant (on the order of seconds) when $n$ exceeds 1500.

\subsubsection{On estimating non-smooth transport maps}
We now consider the case of estimating non-smooth transport maps. Though we lack rigorous guarantees for this setting, our empirical findings suggest that our estimator nevertheless continues to perform well.
	
	Let $P = [-1,1]^d$ and let $\varphi_0(x) = 2|x_1| + \tfrac12 \|x\|^2$. This strongly convex function is differentiable $P$-almost everywhere, with gradient given by 
\begin{align*}
\nabla \varphi_0(x) = 2\text{sign}(x_1) + x\,.
\end{align*}
The resulting pushforward measure $(\nabla \varphi_0)_\sharp P$ has disconnected support, separated along the first coordinate. Mimicking the setup as before, we choose $\alpha=1$ for our choice of $\eps = \eps(n,\alpha)$ following the suggested parameters from \cref{thm:main}. Again, despite not fitting in our problem paradigm, the entropic map is able to out-perform the 1NN estimator in both runtime and MSE.
\begin{figure}[h]
\centering
    \begin{subfigure}[H]{0.5\textwidth}            
            \includegraphics[width=\textwidth]{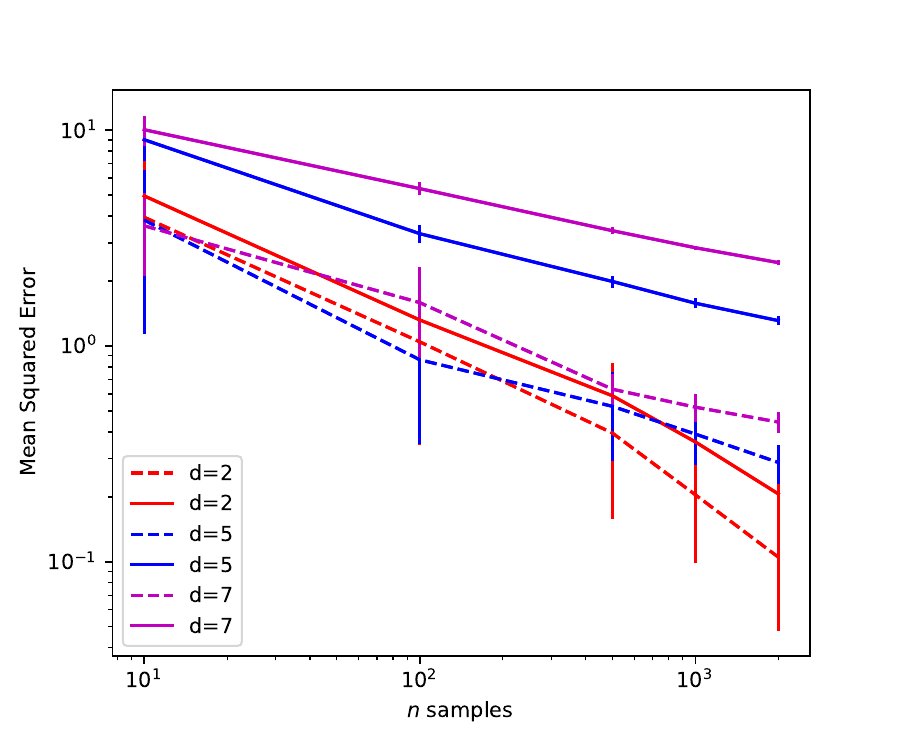}
            \caption{MSE comparison}
    \end{subfigure}%
    \begin{subfigure}[H]{0.5\textwidth}            
            \includegraphics[width=\textwidth]{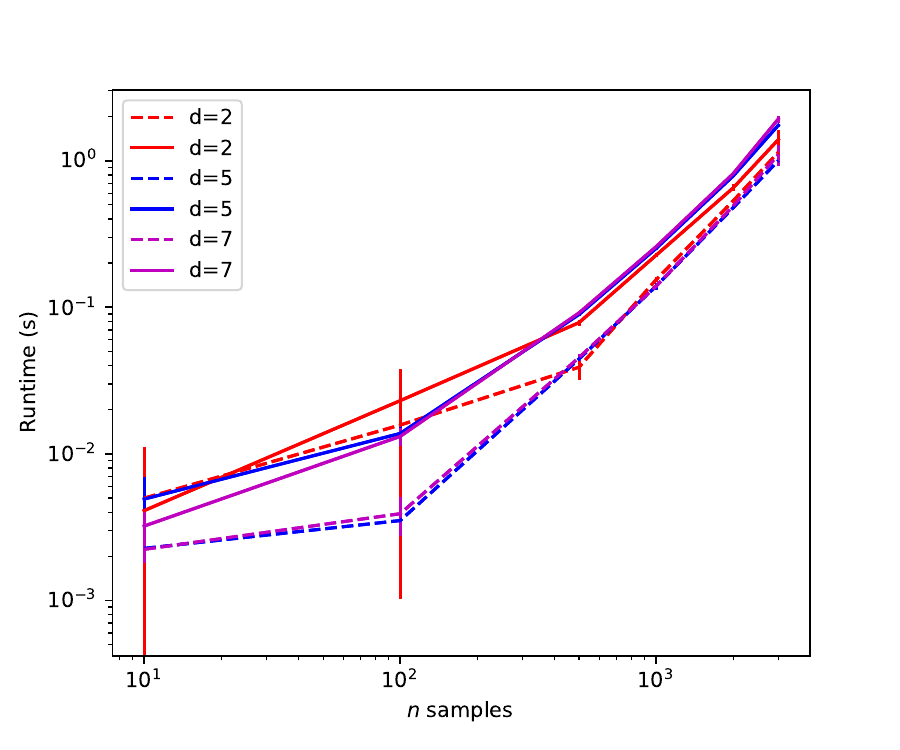}
            \caption{Runtime comparison}
    \end{subfigure}%
    \caption{Dashed lines are our estimator, solid lines are $\hat{T}^{\text{1NN}}$, and $T_0(x) = 2|x_1| + x$}\label{fig: cpu_comp_split}
\end{figure}

\subsubsection{Parallel estimation on massive data sets}
\Cref{fig: cpu_comp} makes clear that computation of both estimators slows for $n \gg 10^3$ when implemented on a CPU. However, Sinkhorn's algorithm can be easily parallelized. Unlike the 1-Nearest Neighbor estimator---and all other transport map estimators of which we are aware---our proposal therefore runs extremely efficiently on GPUs. We again average performance over 20 runs, and choose $\eps$ as in the previous example, with $T_0$ again as the exponential map (coordinate-wise). We see in \Cref{fig: gpu_results} that even when $n = 10^4$ and $d=10$, it takes roughly a third of a second to perform the optimization.
\begin{figure}[h]\label{fig: gpu_results}
\centering
    \begin{subfigure}[H]{0.5\textwidth}            
            \includegraphics[width=\textwidth]{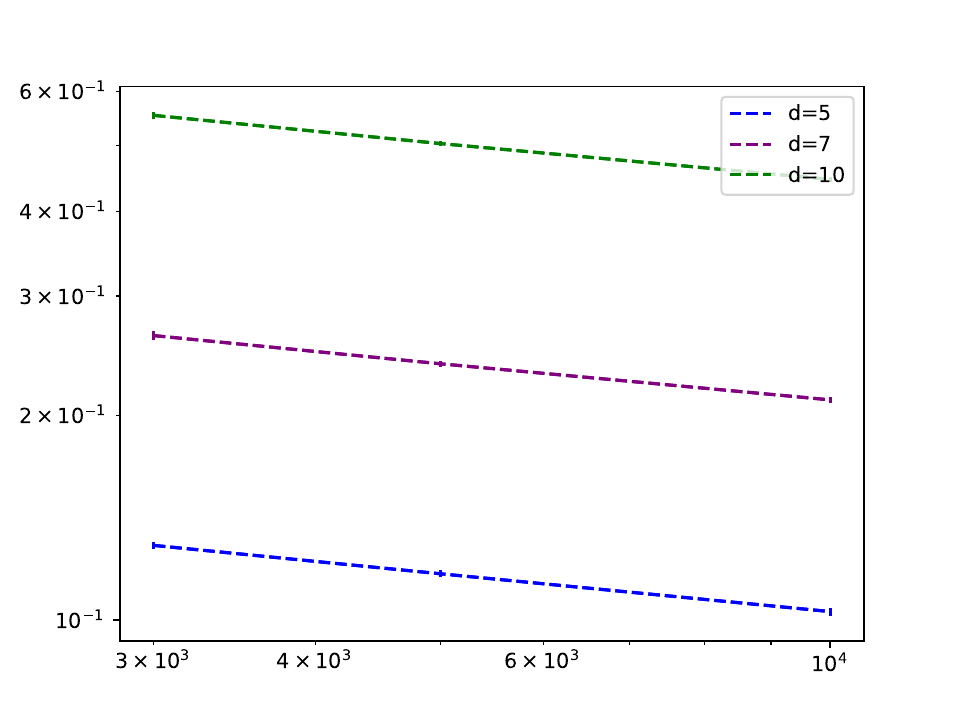}
            \caption{MSE comparison}
    \end{subfigure}%
    \begin{subfigure}[H]{0.5\textwidth}            
            \includegraphics[width=\textwidth]{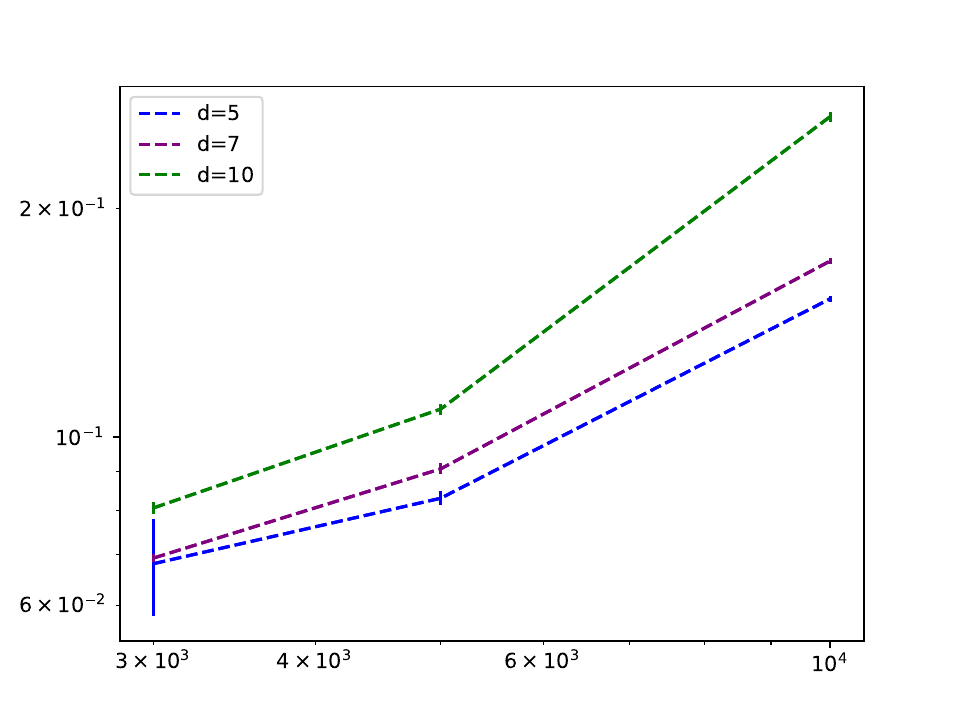}
            \caption{Runtime comparison}
    \end{subfigure}%
    \caption{Performance of a parallel implementation of our estimator on large data sets.}\label{fig: gpu_comp}
\end{figure}

\section{Conclusion and future directions}
We have presented the first finite-sample analysis of an entropic estimator for optimal transport maps.
The resulting estimator is easily parallelizable and fast to compute, even on massive data sets.
Though the theoretical rates we obtain fall short of minimax optimality, we demonstrate that our estimator empirically outperforms the other leading computationally tractable, statistically optimal proposal from the literature.

Based on the empirical success of our estimator, we conjecture that our analysis is loose, and that it may be possible to show that a properly tuned version of an entropic estimator achieves the minimax optimal rate, at least when $\alpha$ is small.
We also conjecture that our assumptions may be significantly loosened, and that similar results hold without stringent conditions on the densities or their support.

\subsubsection*{Acknowledgements}
AAP was supported in part by the Natural Sciences and Engineering Research Council of Canada, and the National Science Foundation under NSF Award 1922658 and grant DMS-2015291. JNW gratefully acknowledges the support of National Science Foundation grant
DMS-2015291. We thank Tudor Manole and Vincent Divol for helpful discussions, and the anonymous reviewers who greatly helped us to improve the quality of this paper. 

%
\begin{appendix}

\section{Second-order error estimate}\label{sec: sec-order-est}
In this section, we outline a short proof of \cref{thm: faster_thm}.
The proof hinges on the \textit{dynamic} formulations of $W_2^2(P,Q)$ and $S_\eps(P,Q)$ \citep{BenBre00, chizat2020faster,conforti2021formula}. We begin with the former:
\begin{align}\label{eq: dyn_w22}
\frac{1}{2}W_2^2(P,Q) = \inf_{\rho,v} \int_0^1 \int_{\R^d} \frac 12 \|v(t,x)\|^2_2 \rho(t,x) \dd x \dd t,
\end{align} 
subject to $\partial_t \rho + \nabla \cdot (\rho v) = 0$, called the \textit{continuity equation}, with $\rho(0,\cdot) = p(\cdot)$ and $\rho(1,\cdot) = q(\cdot)$. We let $(\rho_0,v_0)$ denote the joint minimizers to \cref{eq: dyn_w22} satisfying these conditions.

Similarly, there exists a dynamic formulation for $S_\eps$ \citep[see][for more information]{chizat2020faster,conforti2021formula}: for two measures with bounded densities and compact support,
\begin{align}\label{eq: dyn_se}
S_\eps(P,Q) + \eps\log(\Lambda_\eps) &= \inf_{\rho,v} \int_0^1 \int_{\R^d}  \pran{\frac 12 \|v(t,x)\|^2_2  + \frac{\eps^2}{8}\|\nabla_x \log(\rho(t,x))\|^2_2} \rho(t,x) \dd x \dd t \\
&- \frac{\eps}{2}(\Ent(P) + \Ent(Q) \nonumber,
\end{align}
subject to the same conditions as \cref{eq: dyn_w22}, where $\Lambda_\eps = (2\pi\eps)^{d/2}$.

If we plug in the minimizers from \cref{eq: dyn_w22} into \cref{eq: dyn_se}, we get exactly the result of \cref{eq: tamanini_main} by optimality
\begin{align*}
S_\eps(P,Q) + \eps\log(\Lambda_\eps) &\leq \int_0^1 \int_{\R^d} \frac{1}{2}\|v_0(t,x)\|^2_2 \rho_0(t,x)\dd x \dd t + \frac{\eps^2}{8}I_0(P,Q) - \frac{\eps}{2}(\Ent(P) + \Ent(Q)), \\
&= \frac 12 W_2^2(P,Q) + \frac{\eps^2}{8}I_0(P,Q) - \frac{\eps}{2}(\Ent(P) + \Ent(Q)),
\end{align*}
where we identify $I_0(P,Q) = \int_0^1 \int_{\R^d} \| \nabla_x \log \rho_0(t,x) \|_2^2 \rho_0(t,x) \dd x \dd t$.

\section{Laplace's method proof}\label{sec: laplace_method}
In this section, we prove a quantitative approximation to the integral
\begin{align}\label{eq: I_eps}
I(\eps) := \frac{1}{\Lambda_\eps}\int \exp\pran{-\frac{1}{\eps}f(x)}\dd x\,,
\end{align}
when $\eps \to 0$, with $f$ convex and sufficiently regular and where $\Lambda_\eps = (2\pi\eps)^{d/2}$.
This approximation relies on expanding $f$ around its global minimum; assuming that $f$ is twice-differentiable, the behavior of $f$ near its minimum will be quadratic, so that \cref{eq: I_eps} will resemble a Gaussian integral for $\eps$ sufficiently small.

Recall that for a positive definite matrix $S$, we define $J(S) := \sqrt{\det(S)}$.

In what follows, we write $\rd^2 f(0,y), \rd^3 f(0,y)$ for the second and third total derivative of $f$ at $x$, respectively. That is, for $y \in \R^d$ $$ \dd^2 f(x,y) := y^\top \nabla^2 f(x)y, \ \dd^3 f(x,y) := \sum_{i,j,k=1}^d \frac{\partial^3 f(x)}{\partial y_i \partial y_j \partial y_k}y_iy_jy_k\,. $$
We also define the set $B_r(a) := \{ y \in \R^d \ | \ \|y-a\| \leq r \} $, for some $r > 0$ and $a \in \R^d$. 
\begin{theorem}\label{thm: laplace_thm}
Let $I(\eps)$ be as in \cref{eq: I_eps}, with $f \in \cC^{\alpha+1}$, $m$-strongly convex, $M$-smooth, and $\alpha > 1$. Assume $f$ has a global minimum at $x^*$. Then there exist positive constants $c$ and $C$ depending on $m, M, \alpha, d, \text{and } \|f\|_{\cC^{\alpha + 1}}$ such that for all~$\eps \in (0, 1)$,
\begin{align}
c \leq J(\nabla^2f(x^*)) I(\eps) \leq 1 + C(\eps^{(\alpha-1)/2 \wedge 1})\,.
\end{align}
\end{theorem}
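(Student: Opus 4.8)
The plan is to rescale the integral and then run a quantitative Laplace expansion at the minimizer. After a translation we may assume $x^* = 0$, and since $f$ attains its minimum value $0$ at $x^*$ in the applications we have in mind (where $f$ is the divergence $D[\,\cdot\,|x^*]$), we take $f(0) = 0$. Write $H \defeq \nabla^2 f(0)$, so that $m I \preceq H \preceq M I$ and hence $m^{d/2} \le J(H) \le M^{d/2}$. The substitution $x = \sqrt{\eps}\, u$ gives
\begin{equation*}
I(\eps) = \frac{1}{(2\pi)^{d/2}} \int_{\RR^d} e^{-f(\sqrt{\eps}\, u)/\eps} \dd u\,.
\end{equation*}
The lower bound is then immediate: $M$-smoothness gives $f(x) \le \tfrac{M}{2}\|x\|^2$, so $I(\eps) \ge M^{-d/2}$ and $J(H) I(\eps) \ge (m/M)^{d/2} \defeq c$. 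Dually, $m$-strong convexity gives $f(x) \ge \tfrac{m}{2}\|x\|^2$, so $I(\eps) \le m^{-d/2}$ and $J(H) I(\eps) \le (M/m)^{d/2}$; in particular the asserted upper bound $J(H) I(\eps) \le 1 + C \eps^{(\alpha-1)/2 \wedge 1}$ holds for every $\eps$ bounded below by a constant $\eps_0$, provided $C$ is taken large enough. It therefore remains only to establish that upper bound for $\eps \le \eps_0$, where $\eps_0$ may be taken as small as convenient.

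Fix a radius $r > 0$ depending only on $m$ and $\|f\|_{\cC^{\alpha+1}}$, and split $\RR^d = B_r(0) \cup B_r(0)^c$. On $B_r(0)^c$, strong convexity gives $f(x) \ge \tfrac{m}{4}\|x\|^2 + \tfrac{m}{4} r^2$, so the contribution of this region to $\Lambda_\eps^{-1}\int e^{-f/\eps}$ is at most $e^{-m r^2/(4\eps)}(2/m)^{d/2} = o(\eps^N)$ for every $N$, hence negligible once $\eps_0$ is small. On $B_r(0)$ we Taylor expand $f$ at $0$: if $\alpha \le 2$, then $\nabla^2 f$ is $(\alpha-1)$-H\"older and the integral form of Taylor's theorem gives $f(x) = \tfrac12 x^\top H x + R(x)$ with $|R(x)| \le C_1 \|x\|^{\alpha+1}$; if $\alpha > 2$, we expand one order further, $f(x) = \tfrac12 x^\top H x + \tfrac16 \dd^3 f(0,x) + R(x)$ with $|R(x)| \le C_1 \|x\|^{3 + ((\alpha-2)\wedge 1)}$, and we record that the cubic form $\dd^3 f(0,x)$ is odd in $x$, hence integrates to zero against the even weight $e^{-\frac12 u^\top H u}$ over any centered ball. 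Here $C_1$ depends only on $\|f\|_{\cC^{\alpha+1}}$.

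The heart of the argument is a domination estimate on the \emph{growing} rescaled ball $\{\|u\| \le r/\sqrt{\eps}\}$. Writing $\tau(u) \defeq \eps^{-1} f(\sqrt{\eps}\, u) - \tfrac12 u^\top H u$, the remainder bounds above together with the constraint $\sqrt{\eps}\,\|u\| \le r$ yield $|\tau(u)| \le C_1'\, r^{(\alpha-1)\wedge 1}\|u\|^2$, so for $r$ small enough that $C_1' r^{(\alpha-1)\wedge 1} \le m/4$ we get $e^{-\frac12 u^\top H u} e^{|\tau(u)|} \le e^{-\frac{m}{4}\|u\|^2}$ throughout this ball. Combining this domination with $|e^{-t}-1| \le |t| e^{|t|}$ when $\alpha \le 2$, and with the first-order expansion $e^{-t} = 1 - t + O(t^2 e^{|t|})$ together with the vanishing of the odd cubic term when $\alpha > 2$, and using the elementary Gaussian moment bounds $\int \|u\|^k e^{-\frac{m}{4}\|u\|^2}\dd u \lesssim 1$, one obtains
\begin{equation*}
\left| \int_{\|u\| \le r/\sqrt{\eps}} \bigl( e^{-f(\sqrt{\eps}\, u)/\eps} - e^{-\frac12 u^\top H u} \bigr) \dd u \right| \lesssim \eps^{(\alpha-1)/2 \wedge 1}\,,
\end{equation*}
where the $R$-term produces the displayed exponent (surplus powers of $\|u\|$ being traded for powers of $\eps$ via $\|u\| \le r/\sqrt{\eps}$) and the quadratic-and-higher terms in the expansion of $e^{-\tau}$ contribute only $O(\eps)$. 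Since also $(2\pi)^{-d/2}\int_{\|u\| \le r/\sqrt{\eps}} e^{-\frac12 u^\top H u}\dd u = J(H)^{-1} - o(\eps^N)$, adding the two regions gives $J(H) I(\eps) = 1 + O(\eps^{(\alpha-1)/2 \wedge 1})$ for $\eps \le \eps_0$, the desired upper bound.

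I expect the main obstacle to be precisely this domination estimate on the growing ball: one cannot simply Taylor-expand $\eps^{-1} f(\sqrt{\eps}\, u)$ pointwise and pass to the limit, since the domain $\{\|u\| \le r/\sqrt{\eps}\}$ expands as $\eps \to 0$. The remedy is to choose $r$ — in terms of $m$ and $\|f\|_{\cC^{\alpha+1}}$ only — so that the cross term $\eps^{-1} R(\sqrt{\eps}\, u)$ is absorbed into $\tfrac{m}{4}\|u\|^2$ \emph{uniformly over the entire domain}, which is exactly what makes the integrand dominated by a fixed Gaussian and legitimizes the expansion. A secondary, mostly bookkeeping, difficulty is carrying the two regimes $\alpha \le 2$ and $\alpha > 2$ (absence or presence of the cubic Taylor term, and the role of its oddness in killing the would-be $O(\eps^{1/2})$ error) so as to produce exactly the exponent $(\alpha-1)/2 \wedge 1$.
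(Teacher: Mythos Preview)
Your argument is correct and follows the same Laplace-method blueprint as the paper (split into near/far regions, Taylor expand inside, use strong convexity outside, kill the cubic term by odd symmetry when $\alpha>2$), but the tactical choices differ in two places worth noting. First, your lower bound is cleaner: you invoke $M$-smoothness directly to get $I(\eps)\ge M^{-d/2}$ and combine with $J(H)\ge m^{d/2}$, whereas the paper restricts to the inner ball and Taylor-expands there as well. Second, and more interestingly, the paper cuts at the $\eps$-dependent radius $\tau\sqrt{\eps}$ with $\tau\asymp\sqrt{\log(2/\eps)}$, so that after rescaling the inner region is a ball of radius $\tau$ on which the Taylor remainder is uniformly $O(1)$ --- this lets them use the crude bound $e^t\le 1+C't$ for bounded $t$. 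You instead cut at a \emph{fixed} radius $r$ in the original variable, so the rescaled inner region has radius $r/\sqrt{\eps}\to\infty$; the price is that the remainder $\tau(u)$ is not bounded there, and you pay it with the absorption estimate $|\tau(u)|\le \tfrac{m}{4}\|u\|^2$ (valid for $r$ small), which yields a uniform Gaussian envelope and legitimizes the expansion via $|e^{-t}-1|\le |t|e^{|t|}$. Both work; your route avoids the logarithmic bookkeeping and in fact delivers the two-sided estimate $J(H)I(\eps)=1+O(\eps^{(\alpha-1)/2\wedge 1})$, slightly sharper than what the theorem asserts. One small remark: the parenthetical about ``trading surplus powers of $\|u\|$ for powers of $\eps$'' is not actually needed in your argument --- the Gaussian envelope already integrates all polynomial moments, so the correct exponent comes straight from the prefactor $\eps^{(\alpha-1)/2}$ (or $\eps^{(1+(\alpha-2)\wedge 1)/2}$) in the remainder bound.
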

\begin{proof}
Without loss of generality, we may assume that $x^* = 0$.
For the remainder of the proof, we let $A := \nabla^2 f(0)$. Let $\tau = C_{m,M,d,\alpha}\sqrt{\log(2\eps^{-1})}$, where the constant is to be decided later. We split the desired integral into two parts:
\begin{align*}
I(\eps) &= \frac{1}{\Lambda_\eps} \int_{B_{\tau\sqrt{\ep}}(0)} e^{\frac{-1}{\eps}f(y)}\dd y + \frac{1}{\Lambda_\eps} \int_{B_{\tau\sqrt{\ep}}(0)^c} e^{\frac{-1}{\eps}f(y)}\dd y =: I_1(\eps) + I_2(\eps)\,.
\end{align*}
\paragraph*{Lower bounds}
Note that $I_2(\eps) \geq 0$, so it suffices to only prove $I_1(\eps) \geq \frac{c}{\sqrt{\det(A)}}$ for some constant $c > 0$.\\ \\
Since $f \in \cC^{\alpha+1}$, we have the following Taylor expansion
$$ -f(y) \geq -\frac{1}{2}y^\top A y - C\|y\|^{(\alpha+1) \wedge 3} \geq -\frac{M}{2}\|y\|^2 - C\|y\|^{(\alpha+1) \wedge 3}
$$
for some constant $C > 0$. Using this expansion, we arrive at
\begin{align*}
I_1(\eps) &= \frac{1}{\Lambda_\eps} \int_{B_{\tau\sqrt{\ep}}(0)}\exp\brac{-\frac{M}{2\eps}\|y\|^2 - \frac C \ep \|y\|^{(\alpha+1) \wedge 3}} \dd y \\
&\geq \frac{1}{\Lambda_\eps} \int_{B_{\tau\sqrt{\ep}}(0)}\exp\brac{-\frac{M}{2\eps}\|y\|^2 - \frac C \eps (\tau\sqrt{\ep})^{(\alpha+1) \wedge 3}} \dd y\,.
\end{align*}
Performing a change of measure and rearranging, we get
\begin{align*}
J(A)I_1(\eps) &\geq e^{-C(\tau\sqrt{\eps})^{(\alpha+1) \wedge 3}/\eps} \frac{J(A)}{(2M\pi)^{d/2}}\int_{B_{\tau\sqrt M(0)}}e^{-\frac{1}{2}\|y\|^2}\dd y\\
&\gtrsim e^{-C(\tau\sqrt{\eps})^{(\alpha+1) \wedge 3}/\eps} J(A) \mathbb{P}(\|Y\|\leq \tau\sqrt M)\,,
\end{align*}
where $Y \sim N(0,I_d)$.
Since $\alpha > 1$, the quantity $C(\tau\sqrt{\eps})^{(\alpha+1) \wedge 3}/\eps$ is bounded as $\eps \to 0$, so we may bound $e^{-C(\tau\sqrt{\eps})^{(\alpha+1) \wedge 3}/\eps}$ from below by a constant.
Since $J(A)$ and $\mathbb{P}(\|Y\|\leq \tau\sqrt M)$ are both also bounded from below, we obtain that $J(A) I_1(\eps) \geq c > 0$, as desired.

\paragraph*{Upper bounds}
We first show that the contribution from $I_2(\eps)$ is negligible. 
The strong convexity of $f$ implies $$f \geq \frac{m}{2}\|y\|^2,$$ leading us to the upper bound
\begin{align*}
I_2(\ep) & \leq \frac{1}{\Lambda_\ep}\int_{B_{\tau\sqrt{\ep}}(0)^c} e^{-\frac{m}{2\ep}\|y\|^2} \dd y \\
& =  \frac{1}{(2m\pi)^{d/2}}\int_{B_{\tau}(0)^c}e^{-\frac{1}{2}\|y\|^2}\dd y \\
& \leq \frac{1}{(2m\pi)^{d/2}}e^{-\frac 1 4 \tau^2}\int e^{-\frac{1}{4}\|y\|^2}\dd y \\
& \lesssim e^{-\frac{1}{4}\tau^2}\,,
\end{align*}
where in the penultimate inequality we have used the fact that $e^{-\frac 14 \|y\|^2} \leq e^{-\frac 14 \tau^2}$ on $B_\tau(0)^c$.
Taking $C_{m,M,d,\alpha}$ sufficiently large in the definition of $\tau$, we can make this term smaller than $\eps$.

For upper bounds on $I_1(\ep)$, we proceed in a similar fashion.
If $f \in \cC^{\alpha + 1}$ for $\alpha \in (1, 2]$, then we employ the bound
\begin{equation*}
- f(y) \leq - \frac{1}{2} y^\top A y + C \|y\|^{\alpha+1}\,,
\end{equation*}
yielding
\begin{align*}
I_1(\eps) &= \frac{1}{\Lambda_\ep}\int_{B_{\tau\sqrt{\ep}}(0)} e^{-\frac{1}{\eps}f(y)} \dd y \leq \frac{1}{\Lambda_\ep} \int_{B_{\tau\sqrt{\ep}}(0)} e^{-\frac{1}{2\eps}y^\top A y + \frac{C}{\eps}\|y\|^{\alpha+1}} \dd y \,.
\end{align*}
Performing the change of variables $u=\sqrt{1/\ep}y$, we arrive at
\begin{align*}
I_1(\ep) \leq \frac{1}{(2\pi)^{d/2}} \int_{B_\tau(0)}e^{\frac{-1}{2}u^\top A u}e^{C\ep^{(\alpha-1)/2}\|u\|^{\alpha+1}}\dd u
\end{align*}
Since $\alpha > 1$, the term $C\ep^{(\alpha-1)/2}\|u\|^{\alpha+1}$ is bounded above on $B_\tau(0)$, so that there exists a positive constant $C'$ such that
\begin{equation*}
e^{C\ep^{(\alpha-1)/2}\|u\|^{\alpha+1}} \leq 1+ C'\ep^{(\alpha-1)/2}\|u\|^{\alpha+1} \quad \forall u \in B_\tau(0)\,.
\end{equation*}
We obtain
\begin{align*}
I_1(\ep) & \leq \frac{1}{(2\pi)^{d/2}}\int_{B_\tau(0)}e^{\frac{-1}{2} u^\top A u}(1 + C'\ep^{(\alpha-1)/2}\|u\|^{\alpha+1}) \dd u \\
& \leq \frac{1}{(2\pi)^{d/2}}\int e^{\frac{-1}{2} u^\top A u}(1 + C'\ep^{(\alpha-1)/2}\|u\|^{\alpha+1}) \dd u.
\end{align*}
Performing another change of variables yields
\begin{equation*}
I_1(\ep) \leq \frac{1}{(2 \pi)^{d/2} J(A)} \int (1 + C' \ep^{(\alpha - 1)/2} \|A^{-1/2} u\|^{\alpha + 1}) e^{- \frac 12 \|u\|^2} \dd u
\end{equation*}
We obtain
\begin{equation*}
J(A) I_1(\ep) \leq 1 + C'' \ep^{(\alpha - 1)/2}\,.
\end{equation*}
Combining this with the bound on $J(A) I_2(\ep)$ yields the bound for $\alpha \leq 2$.

When $\alpha > 2$, we use the same technique but expand to the third order, yielding
\begin{align*}
I_1(\eps) & = \frac{1}{\Lambda_\ep}\int_{B_{\tau\sqrt{\ep}}(0)} e^{-\frac{1}{\eps}f(y)} \dd y \\
& \leq \frac{1}{\Lambda_\ep} \int_{B_{\tau\sqrt{\ep}}(0)} e^{-\frac{1}{2\eps}y^\top A y - \frac{1}{6\eps}\dd^3f(0,y) + \frac{C}{\eps}\|y\|^{\alpha+1}} \dd y \\
& = \frac{1}{(2\pi)^{d/2}} \int_{B_{\tau}(0)} e^{-\frac{1}{2}u^\top A u - \frac{\eps^{1/2}}{6}\dd^3f(0,u) + C\eps^{(\alpha-1)/2}\|u\|^{\alpha+1}} \dd u
\end{align*}
Since $- \frac{\eps^{1/2}}{6}\dd^3f(0,u) + C\eps^{(\alpha-1)/2}\|u\|^{\alpha+1}$ is bounded on $B_\tau(0)$, we have
\begin{equation*}
e^{- \frac{\eps^{1/2}}{6}\dd^3f(0,u) + C\eps^{(\alpha-1)/2}\|u\|^{\alpha+1}} \leq 1 - \frac{\eps^{1/2}}{6}\dd^3f(0,u) + C\eps^{(\alpha-1)/2}\|u\|^{\alpha+1} + R(u)\,,
\end{equation*}
where $R$ is a positive remainder term satisfying $R(u) \lesssim \eps (\dd^3f(0,u))^2 + \eps^{\alpha - 1}\|u\|^{2(\alpha+1)}$.
We obtain
\begin{equation*}
I_1(\eps)  \leq \frac{1}{(2\pi)^{d/2}} \int_{B_\tau(0)} \left(1 - \frac{\eps^{1/2}}{6}\dd^3f(0,u) + C\eps^{(\alpha-1)/2}\|u\|^{\alpha+1} + R(u)\right) e^{-\frac 12 u^\top A u} \dd u\,.
\end{equation*}
The symmetry of $B_\tau(0)$ and the fact that $\dd^3f(0,u)e^{-\frac 12 u^\top A u}$ is an odd function of $u$ imply
\begin{equation*}
\int_{B_\tau(0)} \dd^3f(0,u) e^{-\frac 12 u^\top A u} \dd u = 0\,,
\end{equation*}
so
\begin{align*}
I_1(\eps)  & \leq\frac{1}{(2\pi)^{d/2}} \int (1 + C\eps^{(\alpha-1)/2}\|u\|^{\alpha+1} + R(u)) e^{-\frac 12 u^\top A u} \dd u \\
& = \frac{1}{(2\pi)^{d/2}J(A)} \int (1 + C\eps^{(\alpha-1)/2}\|A^{-1/2}u\|^{\alpha+1} + R(A^{-1/2}u))e^{-\frac 12 \|u\|^2} \dd u \\
& \leq 1 + C'' \eps^{(\alpha -1)/2} + C'' \eps\,,
\end{align*}
which is the desired bound.
\end{proof}

\begin{corollary}\label{laplace_z}
Assume \textbf{(A2)} and  \textbf{(A3)}. For all $\alpha \in (1, 3]$, there exist positive constants $c$ and $C$ such that
\begin{align}
c \leq J(\nabla^2\varphi_0^*(x^*))Z_\eps(x)\leq 1 + C \eps^{(\alpha-1)/2}\,,
\end{align}
for all $\eps \in (0, 1)$ and $x \in \supp(P)$.
\end{corollary}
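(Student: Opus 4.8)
The plan is to recognize \cref{laplace_z} as an immediate application of Theorem \ref{thm: laplace_thm} (Laplace's method) to the function $f(y) = D[y \mid x^*]$, for each fixed $x \in \supp(P)$ with $x^* = T_0(x)$.

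First I would record the structure of $y \mapsto D[y\mid x^*] = -x^\top y + \varphi_0(x) + \varphi_0^*(y)$. By definition $Z_\eps(x) = \frac1{\Lambda_\eps}\int \exp(-\tfrac 1\eps D[y\mid x^*])\dd y$ is exactly the integral $I(\eps)$ of Theorem \ref{thm: laplace_thm} for this choice of $f$. Since $\nabla_y D[y\mid x^*] = \nabla\varphi_0^*(y) - x$ and $\nabla^2_y D[y\mid x^*] = \nabla^2\varphi_0^*(y)$, dualizing assumption \textbf{(A3)} (via $\nabla^2\varphi_0^*(\nabla\varphi_0(x)) = (\nabla^2\varphi_0(x))^{-1}$) shows $D[\cdot\mid x^*]$ is $\tfrac1L$-strongly convex and $\tfrac1\mu$-smooth, while \textbf{(A2)} gives $D[\cdot\mid x^*] \in \cC^{\alpha+1}$ with norm bounded by $\|\varphi_0^*\|_{\cC^{\alpha+1}}$ plus a term linear in $\|x\|$, hence controlled uniformly over the compact $\Omega$. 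The preceding lemma (in particular $\nabla\varphi_0^*(x^*) = x$ together with the two-sided bound \cref{quad_bounds}, equivalently Fenchel's equality $\varphi_0(x)+\varphi_0^*(x^*) = x^\top x^*$) shows $D[\cdot\mid x^*]$ attains its global minimum at $y = x^*$ with $D[x^*\mid x^*] = 0$. Thus the matrix $A$ of Theorem \ref{thm: laplace_thm} is $A = \nabla^2 D[x^*\mid x^*] = \nabla^2\varphi_0^*(x^*)$, so $J(A)I(\eps) = J(\nabla^2\varphi_0^*(x^*))Z_\eps(x)$.

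Then I would invoke Theorem \ref{thm: laplace_thm} directly, producing constants $c, C > 0$ depending only on $\mu, L, \alpha, d$ and $\|\varphi_0^*\|_{\cC^{\alpha+1}}$ — hence on none of the quantities we track — with $c \leq J(\nabla^2\varphi_0^*(x^*))Z_\eps(x) \leq 1 + C\eps^{(\alpha-1)/2\wedge 1}$ for all $\eps \in (0,1)$. Since $\alpha \leq 3$ forces $(\alpha-1)/2 \leq 1$, the exponent simplifies to $(\alpha-1)/2$, giving the claimed bound; and because all of the structural constants above are uniform in $x \in \supp(P)$, so are $c$ and $C$.

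The one point that needs genuine care — the main obstacle — is that Theorem \ref{thm: laplace_thm} presupposes $f$ is strongly convex, smooth, and $\cC^{\alpha+1}$ on all of $\RR^d$, whereas \textbf{(A2)}–\textbf{(A3)} are posed on the compact set $\Omega$ and $\varphi_0^*$ is genuinely controlled only on (the image of) $\Omega$. One must therefore either restrict the defining integral of $Z_\eps(x)$ to $\supp(Q)\subseteq\Omega$ and rerun the Laplace argument in localized form — the outer contribution $I_2(\eps)$ only decreases, so both bounds survive verbatim — or extend $\varphi_0^*$ to a globally strongly convex $\cC^{\alpha+1}$ function agreeing with it near $x^*$. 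Either route is routine, but it is the only place where Theorem \ref{thm: laplace_thm} cannot simply be quoted as a black box.
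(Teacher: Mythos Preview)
Your proposal is correct and follows essentially the same approach as the paper: apply \cref{thm: laplace_thm} to $f(\cdot)=D[\cdot\mid x^*]$, which is $1/L$-strongly convex, $1/\mu$-smooth, and minimized at $x^*$, then simplify the exponent using $\alpha\leq 3$. Your added discussion of uniformity in $x$ and of the domain issue (extending or localizing $\varphi_0^*$ so that the hypotheses of \cref{thm: laplace_thm} hold globally) is more careful than the paper, which simply invokes the theorem as a black box.
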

\begin{proof}
Take $f(\cdot) = D[\cdot|x^*]$ which is $1/L$-strongly convex, and $1/\mu$-smooth, with minimizer $x^*$ (see \cref{quad_bounds}).
The claim now follows from \cref{thm: laplace_thm}.
\end{proof}

\section{Omitted proofs}\label{sec: omitted_proofs}
\subsection{Proof of \cref{prop:new_dual}}
It suffices to show that
\begin{equation*}
S_\eps(P, Q) \geq \sup_{\eta \in L^1(\pi_\ep)} \int \eta \dd \pi_\ep - \ep \iint e^{(\eta(x, y) - \frac 12 \|x - y\|^2)/\ep} \dd P(x) \dd Q(y)  + \ep\,,
\end{equation*}
since the other direction follows from choosing $\eta(x, y) = f(x) + g(y)$ and using \cref{eq: dual_eot}.

Write
\begin{equation*}
\gamma(x, y) = e^{\frac 1 \eps (f_\eps (x) + g_\eps (y) - \frac 12 \|x - y\|^2)}
\end{equation*}
for the $P \otimes Q$ density of $\pi_\eps$.
The inequality
\begin{equation*}
a \log a \geq ab - e^b + a
\end{equation*}
holds for all $a \geq 0$ and $b \in \RR$, as can be seen by noting that the right side is a concave function of $b$ which achieves its maximum at $b = \log a$.
Applying this inequality with $a = \gamma(x, y)$ and $b = \frac 1 \eps (\eta(x, y) - \frac 12 \|x - y\|^2)$ and integrating with respect to $P \otimes Q$ yields
\begin{align*}
\int \log \gamma \dd \pi_\eps \geq \frac 1 \eps \bigg(\int \eta \dd \pi_\eps &-  \int \frac 12 \|x - y\|^2 \dd \pi_\eps(x, y)\bigg) \\
&- \iint e^{(\eta(x, y) - \frac 12 \|x - y\|^2)/\eps} \dd P(x) \dd Q(y) + 1
\end{align*}
Multiplying by $\eps$ and using the fact that
\begin{align*}
\int \eps \log \gamma \dd \pi_\eps & = \int (f_\eps(x) + g_\eps(y) - \frac 12 \|x - y\|^2) \dd \pi_\eps = S_\eps(P, Q) - \int \frac 12 \|x - y\|^2 \dd \pi_\eps(x, y)
\end{align*}
yields the claim.
\qed

\subsection{Proof of Proposition~\ref{prop:two_sample_main}}
\Cref{prop:two_sample_main} follows from the following more general result, which recovers \cref{prop:two_sample_main} by choosing $\tilde P = P_n$.

\begin{proposition}\label{prop: delta_dev}
Let $P$ and $Q$ be probability measures with support contained in $\Omega$, and denote by $P_n$ and $Q_n$ corresponding empirical measures. If $\tilde{P}$ is a probability measure with support in $\Omega$ such that $\tv{\tilde{P}}{P_n} \leq \delta$ for some $\delta\geq 0$, then 
\begin{align*}
\E \Big\{\sup_{\chi: \Omega \times \Omega \to \RR} \iint \chi(x, y) \dd \pi_{\eps, n}(x, y) &- \iint (e^{\chi(x, y)} - 1) \tilde{\gamma}(x, y) \dd P(x) \dd Q_n(y)\Big\} \\
& \lesssim \eps^{-1}\delta + (\eps^{-1} + \eps^{-d/2}) \log(n) n^{-1/2}\,,
\end{align*}
where $\pi_{\ep, n}$ is the optimal entropic plan for $P$ and $Q_n$, $\tilde{\gamma}$ is the $\tilde P \otimes Q_n$ density of the optimal entropic plan for $\tilde P$ and $Q_n$, and the supremum is taken over all $\chi \in L^1(\pi_{\ep, n}).$
\end{proposition}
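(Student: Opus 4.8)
The plan is to feed \cref{prop:new_dual}, applied to the pair $(P, Q_n)$, a test function $\eta$ tailored to the reference density $\tilde\gamma$, so that the supremum over $\chi$ collapses to a single $\chi$-independent quantity. Write $c(x,y) = \tfrac12\|x-y\|^2$ and let $(\tilde f, \tilde g)$ be optimal entropic potentials for $(\tilde P, Q_n)$ normalized as in \cref{dual_opt}, so $\tilde\gamma(x,y) = e^{(\tilde f(x) + \tilde g(y) - c(x,y))/\eps}$ and $\int \tilde\gamma(x,y)\,\dd Q_n(y) = 1$ for every $x\in\RR^d$. For bounded $\chi$ I would take $\eta(x,y) = \eps\chi(x,y) + \eps\log\tilde\gamma(x,y) + c(x,y)$, which lies in $L^1(\pi_{\eps, n})$ since each summand is bounded on $\Omega\times\Omega$. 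Then $e^{(\eta - c)/\eps} = e^{\chi}\tilde\gamma$, and since $\pi_{\eps, n}$ has marginals $P$ and $Q_n$ and $\eps\log\tilde\gamma(x,y) = \tilde f(x) + \tilde g(y) - c(x,y)$, the $\int c\,\dd\pi_{\eps, n}$ arising from $\eta$ cancels and, using $\iint\tilde\gamma\,\dd P\,\dd Q_n = \int 1\,\dd P = 1$, \cref{prop:new_dual} yields
\[
\iint \chi\,\dd\pi_{\eps, n} - \iint (e^{\chi} - 1)\tilde\gamma\,\dd P\,\dd Q_n \;\le\; \eps^{-1}\Big(S_\eps(P, Q_n) - \textstyle\int \tilde f\,\dd P - \int \tilde g\,\dd Q_n\Big),
\]
whose right-hand side no longer involves $\chi$ and hence bounds the supremum.

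It then remains to estimate the right-hand side. Using $\int \tilde f\,\dd\tilde P + \int \tilde g\,\dd Q_n = S_\eps(\tilde P, Q_n)$ (the entropic dual value attained at its optimal potentials, since $\iint\tilde\gamma\,\dd\tilde P\,\dd Q_n = 1$) rewrites it as $\eps^{-1}\big(S_\eps(P, Q_n) - S_\eps(\tilde P, Q_n) - \int \tilde f\,\dd(P - \tilde P)\big)$. Inserting the optimal potentials $(f_{\eps, n}, g_{\eps, n})$ of $(P, Q_n)$ into the entropic dual of $(\tilde P, Q_n)$ — and noting that $\int e^{(f_{\eps, n}(x) + g_{\eps, n}(y) - c(x,y))/\eps}\,\dd Q_n(y) = 1$ for every $x$, so the exponential term integrates to $1$ against $\tilde P\otimes Q_n$ — gives $S_\eps(P, Q_n) - S_\eps(\tilde P, Q_n) \le \int f_{\eps, n}\,\dd(P - \tilde P)$. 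Hence the quantity we must bound is at most $\eps^{-1}\int(f_{\eps, n} - \tilde f)\,\dd(P - \tilde P)$, which I would split as $\eps^{-1}\int(f_{\eps, n} - \tilde f)\,\dd(P - P_n) + \eps^{-1}\int(f_{\eps, n} - \tilde f)\,\dd(P_n - \tilde P)$. The second piece is at most $\eps^{-1}\|f_{\eps, n} - \tilde f\|_{L^\infty(\Omega)}\,\tv{\tilde P}{P_n} \le \eps^{-1}\delta\,\|f_{\eps, n} - \tilde f\|_{L^\infty(\Omega)}$, and since $\Omega$ is compact the entropic potentials are bounded by constants on $\Omega$, so this contributes $O(\eps^{-1}\delta)$.

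For the first piece I would pass to a fixed function class: $f_{\eps, n}$ and $\tilde f$ are both of the form $x\mapsto -\eps\log\int e^{(g(y) - c(x,y))/\eps}\,\dd\mu(y)$ for a probability measure $\mu$ on $\{Y_1,\dots,Y_n\}$ and bounded $g$, hence belong to a common class $\mathcal{F}_\eps$ of smooth functions that are uniformly bounded and $O(1)$-Lipschitz on $\Omega$ with Hessian of order $\eps^{-1}$. Thus $\E\,\big|\int(f_{\eps, n} - \tilde f)\,\dd(P - P_n)\big| \le 2\,\E\,\sup_{f\in\mathcal{F}_\eps}\big|\int f\,\dd(P - P_n)\big|$, and this expected supremum is governed by the very same metric-entropy and empirical-process estimates (including the finite-sample convergence of the Sinkhorn divergence, \cref{cor: emp_proc_special}) that control $\Delta_2$ and $\Delta_3$ in the proof of \cref{prop: main}; tracking the $\eps$-dependence produces a bound of order $(1 + \eps^{1 - d'/2})\log(n)\,n^{-1/2}$, so after dividing by $\eps$ we recover the $(\eps^{-1} + \eps^{-d'/2})\log(n)\,n^{-1/2}$ term and, with the $\delta$ term, the stated bound. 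The main obstacle is exactly this last step: showing that the \emph{random} potentials $f_{\eps, n}$ and $\tilde f$ lie in a class whose complexity is controlled uniformly in $n$ and extracting the sharp $\eps^{-d'/2}$ scaling — this is where the rounded dimension $d' = 2\ceil{d/2}$ enters, through the regularity of the Gaussian-smoothed empirical process — whereas the remaining manipulations are bookkeeping with the entropic optimality conditions and compactness of $\Omega$.
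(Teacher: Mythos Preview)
Your argument is essentially the paper's: the same choice of $\eta$ in \cref{prop:new_dual}, the same reduction to $\eps^{-1}\int(f_{\eps,n}-\tilde f)\,\dd(P-\tilde P)$, and the same split through $P_n$. The one substantive difference is in the first piece $\int(f_{\eps,n}-\tilde f)\,\dd(P-P_n)$: you propose to place both potentials in a deterministic class $\mathcal F_\eps$ and bound the empirical process uniformly, whereas the paper notices that $f_{\eps,n}$, being the potential for $(P,Q_n)$, is \emph{independent} of $P_n$, so $\E\int f_{\eps,n}\,\dd(P-P_n)=0$ and only $\E\int\tilde f\,\dd(P_n-P)$ must be controlled. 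That single remaining term is then handled exactly via the regularity estimate of \citet[Proposition~1]{GenChiBac18} (which puts $\tilde f$ in a fixed $\cC^s$ ball with norm $\lesssim 1+\eps^{1-s}$) together with \cref{empirical_process}; this is where $d'$ enters, through the choice of H\"older order $s$ needed for the covering-number bound, not through any ``Gaussian-smoothed'' process as you suggest, and \cref{cor: emp_proc_special} is not invoked here. Your route is valid too, since $f_{\eps,n}$ belongs to the same deterministic class, but the paper's independence observation is a clean shortcut that halves the work.
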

\begin{proof}
	Write $\tilde f_\eps$ and $\tilde g_\eps$ for the optimal entropic potentials for $\tilde P$ and $Q_n$, so that $$\tilde{\gamma}(x, y) = \exp\pran{\eps^{-1}(\tilde{f}_\eps(x) + \tilde{g}_\eps(y) - \frac 12 \|x-y\|^2)}\,.$$
Plugging in $\eta(x, y) = \eps \chi(x, y) + \tilde{f}_\eps(x) + \tilde{g}_\eps(y)$ into \cref{prop:new_dual} gives
\begin{align*}
\sup_{\chi: \Omega \times \Omega \to \RR} \iint \chi \dd \pi_{\eps, n} - \iint (e^{\chi(x, y)} -1) \tilde{\gamma}(x, y) \dd P(x) \dd Q_n(y) &\leq \eps^{-1} \bigg(S_\eps(P, Q_n) \\
&- \int  \tilde{f}_\eps \dd P - \int \tilde{g}_\eps \dd Q_n\bigg)\,,
\end{align*}
where we have used that $\tilde \gamma$ is a probability density with respect to $P \otimes Q_n$ by \cref{dual_opt}.

Let $f_{\eps, n}$ and $g_{\eps, n}$ be the optimal entropic dual potentials for $P$ and $Q_n$. 
As in the proof of \cref{empirical_process}, the optimality of $\tilde{f}_\eps$ and $\tilde{g}_{\eps}$ for the pair $(\tilde{P}, Q_n)$ implies
\begin{align*}
\int\tilde{f}_\eps \dd \tilde{P} + \int \tilde{g}_{\eps} \dd Q_n & \geq \int f_{\eps, n} \dd \tilde{P} + \int g_{\eps, n} \dd Q_n \\
&- \eps \iint e^{\frac 1 \eps( f_{\eps, n}(x) + g_{\eps, n}(y) - \frac 12 \|x - y\|^2)} \dd \tilde{P}(x) \dd Q_n(y) + \eps \\
& = \int f_{\eps, n} \dd \tilde{P} + \int g_{\eps, n} \dd Q_n\,,
\end{align*}
since $\int e^{\frac 1 \eps( f_{\eps, n}(x) + g_{\eps, n}(y)- \frac 12 \|x - y\|^2)} \dd Q_n(y) = 1$ by the dual optimality condition in \cref{dual_opt}.
Therefore
\begin{align*}
S_\eps(P, Q_n) - \int  \tilde{f}_\eps \dd P - \int \tilde{g}_\eps \dd Q_n &\leq \int (f_{\eps, n} - \tilde{f}_\eps) (\rd P - \rd \tilde{P})\,\\
&= \int (f_{\eps, n} - \tilde{f}_\eps) (\rd P - \rd {P}_n) + \int (f_{\eps, n} - \tilde{f}_\eps) (\rd P_n - \rd \tilde{P})
\end{align*}
By \citet[Proposition 1]{GenChiBac18}, we may choose $f_{\eps, n}$ and $\tilde f_\eps$ to satisfy $\norm{f_{\eps, n}}_\infty, \|\tilde f_\eps\|_\infty \lesssim 1$, so we may bound the second term as
\begin{equation*}
 \int (f_{\eps, n} - \tilde{f}_\eps) (\rd P_n - \rd \tilde{P}) \lesssim \tv{\tilde{P}}{P_n} \leq \delta\,.
\end{equation*}
Also, since $f_{\eps, n}$ is independent of $P_n$,
\begin{equation*}
\E f_{\eps, n}(\rd P - \rd P_n)(y) = 0\,.
\end{equation*}
Altogether, we obtain 
\begin{multline*}
\E \sup_{\chi: \Omega \times \Omega \to \RR} \iint \chi \dd \pi_{\eps, n} - \iint (e^{\chi(x, y)} -1) \gamma(x, y) \dd P(x) \dd Q_n(y)\\ \lesssim \eps^{-1}\bigg( \delta 
+ \E \int \tilde{f}_\eps (\rd P_n - \rd P)\bigg)\,.
\end{multline*}
We conclude by again appealing to \citet[Proposition 1]{GenChiBac18}: since $\tilde{f}_\eps$ is an optimal entropic potential for the pair of compactly distributed probability measures $(\tilde{P},Q_n)$, its derivatives up to order $s$ are bounded by $C_{s, d, K}(1 + \eps^{1-s})$ on any compact set $K$ for any $s \geq 0$.
Taking $K$ to be a suitably large ball containing $\Omega$ and applying \Cref{empirical_process} with $s = d/2$ yields the claim.
\end{proof}

\subsection{Proofs from \cref{sec: one_samp}}

\begin{proof}[Proof of \cref{lem:subg}]
Fix $x \in \supp(P)$ and let $x^* := T_0(x)$, and for notational convenience, write $Y$ for the random variable with density $q_\eps^x$, and denote its mean by $\bar{Y}$.
It suffices to show the existence of a constant $K$ such that for any unit vector $v$, 
\begin{equation}\label{eq: subg1}
\E e^{(v^\top(Y-x^*))^2/4 L \eps} \leq K\,.
\end{equation}
Indeed, by Young's and Jensen's inequalities, this implies
\begin{equation*}
\E e^{(v^\top(Y-\bar Y))^2/8 L \eps} \leq e^{(v^\top(\bar Y-x^*))^2/4 L \eps} \E e^{(v^\top(Y-x^*))^2/4 L \eps} \leq K^2\,,
\end{equation*}
and hence by another application of Jensen's inequality that
\begin{equation*}
\E e^{(v^\top(Y-\bar Y))^2/C \eps} \leq 2
\end{equation*}
for $C = 8 L K^2$.

We prove \cref{eq: subg1} using the strong convexity of $D[y | x^*]$.
By \cref{quad_bounds},
\begin{align*}
\E e^{(v^\top(Y-x^*))^2/4 L \eps} &\leq \frac{1}{Z_{\eps}(x) \Lambda_\eps} \int e^{- \frac 1 \eps D[y | x^*] + \frac{1}{4 L \eps} \|y - x^*\|^2} \dd y \\
& \leq \frac{1}{Z_{\eps}(x) \Lambda_\eps} \int e^{- \frac{1}{4 L \eps} \|y - x^*\|^2} \dd y \\
& = \frac{(2L)^{d/2}}{Z_\eps(x)} \\
& \lesssim 1\,,
\end{align*}
where the final inequality uses \cref{laplace_z}.

\end{proof}

\begin{proof}[Proof of \cref{lem:exp_bound}]
Let us first fix an $x \in \supp(P)$, and write $Y$ for the random variable with density $q_\eps^x$ and $\bar Y$ for its mean, and write $x^* := T_0(x)$. \Cref{lem:subg} implies \citep[see][Proposition 2.5.2]{Ver18} that there exists a positive constant $C$, independent of $x$, such for any $v \in \RR^d$, 
\begin{equation*}
\E e^{(v^\top(Y - x^*))} = e^{v^\top(\bar Y - x^*)} \E e^{(v^\top(Y - \bar Y))}  \leq e^{ v^\top(\bar Y - x^*) + C \eps \|v\|^2} \leq e^{ \frac 1{4\eps} \|\bar Y - x^*\|^2 + (C+ 1) \eps \|v\|^2}\,,
\end{equation*}
where the last step uses Young's inequality.
Equivalently, for $a > (C+1)\eps$, we have for all $x \in \supp(P)$ and $v \in \RR^d$
\begin{equation*}
\int e^{(v^\top(y - x^*)) - a \|v\|^2} q_\eps^x(y) \dd y \leq e^{ \frac 1{4\eps} \|\bar y^x -x^*\|^2 } \,.
\end{equation*}
Applying this inequality with $v = h(x)$ and integrating with respect to $P$ yields the claim.
\end{proof}

\begin{proof}[Proof of \cref{lem:mean}]
It suffices to prove the claim for $\alpha \in (1, 2]$.
Let us fix an $x \in \supp(P)$. Since $\varphi_0^* \in \cC^{\alpha+1}(\Omega)$, Taylor's theorem implies
\begin{equation*}
D[y|x^*] = - x^\top y + \varphi_0(x) + \varphi_0^*(y) = \frac 12 (y- x^*)^\top \nabla^2 \varphi_0^*(x^*) (y- x^*) + R(y|x^*)\,,
\end{equation*}
where the remainder satisfies
\begin{equation}\label{eq:taylor_rem}
|R(y|x^*)| \lesssim \|y - x^*\|^{1+\alpha}\,.
\end{equation}

We aim to bound
\begin{align*}
\left\|\bar{y}^x - x^*\right\| & = \left\|\frac{1}{Z_\ep(x) \Lambda_\ep} \int (y - x^*) e^{- \frac{1}{\ep}D[y|x^*]} \dd y \right\|
\end{align*}

Let 
$\tau = C \sqrt{\log(2 \eps^{-1})}$ for a sufficiently large constant $C$.
As in the proof of \cref{thm: laplace_thm}, the contribution to the integral from the set $B_{\tau \sqrt \eps}(x^*)^c$ is negligible; indeed, \cref{quad_bounds} implies
\begin{align*}
& \left\|\frac{1}{Z_\ep(x) \Lambda_\ep} \int_{B_{\tau \sqrt \eps}(x^*)^c} (y - x^*) e^{- \frac{1}{\ep}D[y|x^*]} \dd y\right\| \\
&\qquad \leq \frac{1}{Z_\ep(x) \Lambda_\ep} \int_{B_{\tau \sqrt \eps}(x^*)^c} \|y - x^*\| e^{- \frac{1}{2 L\ep}\|y - x^*\|^2} \dd y \\
 &\qquad  = \frac{\ep^{(d+1)/2}}{Z_\ep(x) \Lambda_\ep} \int_{B_{\tau}(0)^c} \|y\| e^{- \frac{1}{2 L} \|y\|^2} \dd y \\
 &\qquad  \leq \frac{\ep^{(d+1)/2}}{Z_\ep(x) \Lambda_\ep} \left(\int \|y\|^2 e^{- \frac{1}{2 L} \|y\|^2} \dd y\right)^{1/2} \left(\int_{B_{\tau}(0)^c} e^{- \frac{1}{2 L} \|y\|^2} \dd y\right)^{1/2} \\
&\qquad \lesssim \eps^{1/2} \mathbb{P}[\|Y\| \geq \tau]\,, \quad \quad Y \sim \cN(0, I_d)\,,
\end{align*}
and this quantity can be made smaller than $\eps$ by choosing the constant in the definition of $\tau$ sufficiently large.

It remains to bound
\begin{multline}\label{eq:near_laplace}
\left\|\frac{1}{Z_\ep(x) \Lambda_\ep} \int_{B_{\tau \sqrt \eps}(x^*)} (y - x^*) e^{- \frac 1 \ep R(y|x^*)} e^{- \frac{1}{2\ep}(y- x^*)^\top \nabla^2 \varphi_0^*(x^*) (y- x^*)}  \dd y\right\|  = \\
\left\|\frac{1}{Z_\ep(x) \Lambda_\ep} \int_{B_{\tau \sqrt \eps}(x^*)} (y - x^*) \left(e^{- \frac 1 \ep R(y|x^*)} - 1\right) e^{- \frac{1}{2\ep}(y- x^*)^\top \nabla^2 \varphi_0^*(x^*) (y- x^*)}  \dd y\right\|\,,
\end{multline}
where we have used that
\begin{equation*}
\int_{B_{\tau \sqrt \eps}(x^*)} (y - x^*)e^{- \frac{1}{2\ep}(y- x^*)^\top \nabla^2 \varphi_0^*(x^*) (y- x^*)}  \dd y = 0\,.
\end{equation*}

By \eqref{eq:taylor_rem},
\begin{equation*}
\frac 1 \ep |R(y|x^*)| \lesssim \frac{1}{\ep} \|y - x^*\|^{1+\alpha} \lesssim 1 \quad \quad \forall y \in B_{\tau \sqrt \eps}(x^*)\,,
\end{equation*}
and since $|e^t - 1| \lesssim |t|$ for $|t| \lesssim 1$, we obtain that
\begin{equation*}
\left|e^{- \frac 1 \ep R(y|x^*)} - 1\right| \lesssim \frac{1}{\ep} \|y - x^*\|^{1+\alpha} \quad \quad \forall y \in B_{\tau \sqrt \eps}(x^*)\,.
\end{equation*}
Therefore~\eqref{eq:near_laplace} is bounded above by
\begin{align*}
\frac{C}{Z_\ep(x) \Lambda_\ep \ep} \int_{\RR^d} \|y - x^*\|^{2+\alpha} e^{- \frac{1}{2\ep}(y- x^*)^\top \nabla^2 \varphi_0^*(x^*) (y- x^*)} \dd y \lesssim\ep^{\alpha/2}\,,
\end{align*}
where in the last step we have applied \cref{laplace_z}.
We therefore obtain
\begin{equation*}
\|\E_{q_\eps^x}(Y) - x^*\|\  \lesssim \ep^{\alpha/2} + \ep\,.
\end{equation*}
Taking squares, we get the desired result.
\end{proof}

\section{Supplementary results}\label{sec: remaining_proofs}

\begin{proposition}\label{exp_empirical_process}
For any $x \in \supp(P)$, if $a \in [L \eps, 1]$, then
\begin{equation*}
\E \sup_{h: \Omega \to \RR^d} \int e^{j_h(x, y)} \frac{q_\eps^x(y)}{q(y)} (\rd Q_n - \rd Q)(y)\lesssim (1+\ep^{-d/2})  n^{-1/2}\,,
\end{equation*}

where the implicit constant is uniform in $x$.
\end{proposition}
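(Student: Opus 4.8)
The plan is to fix $x \in \supp(P)$ throughout (the asserted constant is uniform in $x$) and write $x^* = T_0(x)$. The key reduction is that, for fixed $x$, the integrand $e^{j_h(x,y)}q_\eps^x(y)/q(y)$ depends on $h$ only through the single vector $v := h(x)\in\RR^d$, so the quantity to be bounded is the expected supremum of the empirical process $v \mapsto \int f_v\,(\rd Q_n - \rd Q)$ indexed by $v\in\RR^d$, where $f_v(y) := e^{v^\top(y - x^*) - a\|v\|^2}\,q_\eps^x(y)/q(y)$. Since $Q$ and $Q_n$ are supported in $\Omega$, only $f_v|_\Omega$ matters, and there $q\ge m$ by \textbf{(A1)}.

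First I would establish a \emph{constant} envelope for the class $\mathcal F := \{f_v|_\Omega : v\in\RR^d\}$; this is the one place where the lower bound $a\ge L\eps$ is essential. Using $Z_\eps(x)\gtrsim 1$ (uniform in $x$) from \cref{laplace_z} together with $D[y|x^*]\ge \frac1{2L}\|y-x^*\|^2$ from \cref{quad_bounds}, one gets $f_v(y) \le \frac{C}{m\Lambda_\eps}\exp\!\big(v^\top z - a\|v\|^2 - \tfrac1{2L\eps}\|z\|^2\big)$ with $z := y - x^*$. The exponent is a quadratic form in $(z,v)$ whose associated $2\times 2$ symmetric matrix has negative $(1,1)$ entry and determinant $\tfrac{a}{2L\eps}-\tfrac14$, hence is negative semidefinite precisely when $a\ge L\eps/2$; so for $a\ge L\eps$ the exponent is $\le 0$ for all $z,v$ and $\sup_v\|f_v\|_{L^\infty(\Omega)}\le B$ with $B\lesssim \eps^{-d/2}$. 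I expect this envelope computation to be the main (and essentially only) nontrivial point: the subtlety is spotting that the penalty $a\|v\|^2$ with $a\ge L\eps$ is exactly what absorbs the exponential growth of $e^{v^\top z}$ against the Gaussian-type tail of $q_\eps^x$, so that the supremum over the \emph{unbounded} parameter set $\RR^d$ is controlled by a quantity independent of $v$.

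Second I would note that $\mathcal F$ is a VC-subgraph class whose index depends only on $d$: the maps $\ell_v(y)=v^\top(y-x^*)-a\|v\|^2$ all lie in the $(d+1)$-dimensional vector space of affine functions of $y$, which is VC-subgraph, and composing with the increasing map $t\mapsto e^t$ and then multiplying by the fixed positive function $q_\eps^x/q$ preserves the VC-subgraph property with the same index. Consequently the uniform covering numbers obey $\sup_\mu N(\tau B,\mathcal F,L^2(\mu))\le (A/\tau)^{c(d)}$ on $(0,1)$ with $A$ an absolute constant; crucially this bound is \emph{scale-free}, introducing no dependence on $\eps$ or $n$. Feeding it into Dudley's entropy bound with the constant envelope $B$ yields $\E\sup_v|\int f_v\,(\rd Q_n - \rd Q)| \lesssim (B/\sqrt n)\int_0^1\sqrt{\log\!\big((A/\tau)^{c(d)}\big)}\,\rd\tau \lesssim B\,n^{-1/2}\lesssim \eps^{-d/2}n^{-1/2}$, and since $\eps^{-d/2}\le 1+\eps^{-d/2}$ this gives the claim, with a constant depending only on $d,m,L$ and uniform in $x$.

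Finally, a remark on why I would \emph{not} take the more naive route of treating $\mathcal F$ merely as a $d$-parameter Lipschitz family: that would force an a priori restriction of $v$ to a ball of radius $\asymp\sqrt{\log(1/\eps)/\eps}$ together with a Lipschitz constant of order $\eps^{-d/2}\sqrt{\log(1/\eps)/\eps}$, and the resulting metric-entropy integral would carry a spurious $\sqrt{\log(1/\eps)}$ (hence $\sqrt{\log n}$) factor. It is exactly the VC-subgraph structure, via its scale-free covering numbers, that removes this factor and delivers the log-free rate stated in the proposition.
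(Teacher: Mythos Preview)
Your proof is correct, and the overall architecture---reduce to a single vector $v=h(x)$, establish the envelope $B\lesssim\eps^{-d/2}$ from the quadratic bound on $v^\top(y-x^*)-a\|v\|^2-\frac1\eps D[y|x^*]$, then apply Dudley with a log-free entropy integral---matches the paper. The envelope step is essentially identical: the paper records it as a separate lemma (Young's inequality gives $v^\top(y-x^*)-a\|v\|^2-\frac1\eps D[y|x^*]\le -\tfrac{\eps L}{2}\|v\|^2$ once $a\ge L\eps$), and your negative-semidefinite check of the $2\times2$ block form is the same computation.

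The genuine difference is in how the covering number is obtained. The paper does \emph{not} invoke VC theory; instead it builds an explicit $L^\infty(Q)$ cover by hand: for $\|v\|\ge\delta^{-1/2}$ the decay $e^{-\eps L\|v\|^2/2}$ makes every such $f_v$ within $K\delta$ of a single representative, while for $\|v\|\le\delta^{-1/2}$ a Euclidean $\delta^{3/2}$-net on the parameter ball suffices via the elementary Lipschitz bound $|e^a-e^b|\le|a-b|$ for $a,b\le0$. This yields $\log N(\tau,\cJ,L^\infty(Q))\lesssim d\log(K/\tau)$ with $K\lesssim1+\eps^{-d/2}$ and then $\int_0^K\sqrt{\log(K/\tau)}\,\rd\tau\lesssim K$. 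Your VC-subgraph route is cleaner and more structural: recognizing that the $\ell_v$ sit inside the $(d{+}1)$-dimensional affine space, then pushing through the monotone map $e^{(\cdot)}$ and the fixed positive multiplier, immediately gives scale-free $L^2$ covering numbers without the large-$v$/small-$v$ case split. Both reach the same log-free bound; the paper's argument is more elementary (no VC machinery) and gives the stronger $L^\infty$ covering, while yours isolates the algebraic reason the class is small and dispenses with the explicit net construction. Your closing remark about the ``naive Lipschitz route'' is apt, but note that the paper's construction is precisely a sharpened version of that route: the separate treatment of $\|v\|\ge\delta^{-1/2}$ via the Gaussian decay is exactly what removes the spurious $\sqrt{\log(1/\eps)}$ you were worried about.
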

\begin{proof}

To bound this process, we employ the following two lemmas:
\begin{lemma}\label{quadratic_ub}
If $a \geq L\ep$, then for any $v \in \RR^d$,
\begin{equation*}
v^\top(y -x^*) - a \|v\|^2 - \frac 1 \ep D[y | x^*] \leq - \frac{\ep L}{2} \|v\|^2\,.
\end{equation*}
\end{lemma}
\begin{proof}
By \cref{quad_bounds}, $D[y | x^*] \geq \frac{1}{2 L} \|y - x^*\|^2$.
Combining this fact with Young's inequality yields
\begin{align*}
v^\top(y -x^*) - a \|v\|^2 - \frac 1 \ep D[y | x^*] & \leq \frac{\ep L}{2} \|v\|^2 + \frac{1}{2 \ep L} \|y - x^*\|^2 - a \|v\|^2 - \frac 1 \ep D[y | x^*] \\
& \leq - \frac{\ep L}{2} \|v\|^2\,, 
\end{align*}
as claimed.
\end{proof}
By slight abuse of notation, for any $v \in \RR^d$, write $j_v: \Omega \to \RR$ for the function
\begin{equation*}
j_v(y) = v^\top (y - T_0(x)) - a \|v\|^2\,.
\end{equation*}
Let
\begin{align}\label{eq: J_eps_class}
\cJ_\eps = \{e^{j_v} \frac{q_\eps^x(y)}{q(y)}: v \in \RR^d\}
\end{align}
\begin{lemma}\label{lem: J_eps_class}
If $a \in [L \ep, 1]$, then
\begin{equation*}
\log N(\tau, \cJ_\eps, \|\cdot\|_{L^\infty(Q)}) \lesssim d \log(K/\tau)\,,
\end{equation*}
where $K \lesssim (1+\ep^{-d/2})$.
\end{lemma}
\begin{proof}
	Fix $\delta \in (0, 1)$.
Let $\cN_\delta$ be a $\delta^{3/2}$-net with respect to the Euclidean metric of a ball of radius $\delta^{-1/2}$ in $\RR^d$, and consider the set
\begin{equation*}
\cG_{\delta} \defeq \left\{e^{j_v} \frac{q_\eps^x(y)}{q(y)}: v \in \cN_\delta\right\} \cup \{e^{j_w} \frac{q_\eps^x(y)}{q(y)}\}\,,
\end{equation*}
where $w \in \RR^d$ is an arbitrary vector of norm $ \delta^{-1/2}$.
By \cref{quadratic_ub}, if $a > L \ep$ and $\|v\| \geq R$, then
\begin{equation*}
\sup_{y \in \supp(Q)} e^{j_v} \frac{q_\eps^x(y)}{q(y)} \leq \sup_{y \in \supp(Q)} \frac{1}{Z_\eps(x) \Lambda_\ep q(y)} e^{- \frac{1}{2\ep L} R^2} \leq \sup_{y \in \supp(Q)} \frac{2 L}{Z_\eps(x) \Lambda_\ep q(y) R^2}\,.
\end{equation*}
Therefore, if $v \in \RR^d$ satisfies $\|v\| \geq \delta^{-1/2}$, then
\begin{equation*}
\sup_{y \in \supp(Q)} \left| e^{j_w(y)}\frac{q_\eps^x(y)}{q(y)} - e^{j_v(y)}\frac{q_\eps^x(y)}{q(y)}\right| \leq \sup_{y \in \supp(Q)} \frac{4 \delta L}{Z_\eps(x) \Lambda_\ep q(y)} \leq K \delta
\end{equation*}
for $K = \sup_{y \in \supp(Q)} \frac{1+4 L}{Z_\ep(x) \Lambda_\ep q(y)} \lesssim \eps^{-d/2}$.

On the other hand, if $v \in \RR^d$ satisfies $\|v\| \leq  \delta^{-1/2}$, pick $u \in \cN_\delta$ satisfying $\|u - v\| \leq \delta^{3/2}$.
We then have
\begin{equation*}
\left| e^{j_u(y)}\frac{q_\eps^x(y)}{q(y)} - e^{j_v(y)}\frac{q_\eps^x(y)}{q(y)}\right| \leq \frac{|j_u(y) - j_v(y)|}{q(y)}\,,
\end{equation*}
where we have used \cref{quadratic_ub} combined with the inequality
\begin{equation*}
|e^a - e^b| \leq |a - b| \quad \quad \forall a, b \leq 0\,.
\end{equation*}

Since $\|u - v\| \leq \delta^{3/2}$ and $\|u\| + \|v\| \leq 2 \delta^{-1/2}$, we have for any $y \in \Omega$,
\begin{equation*}
|j_u(y) - j_v(y)| = |(u-v)^\top(y -T_0(x)) - a (\|u\|^2 - \|v\|^2)| \lesssim \delta^{3/2} + \delta a\,,
\end{equation*}
where we have used the fact that $y$ and $T_0(x)$ lie in the compact set $\Omega$.
Therefore, as long as $a \leq 1$, this quantity is bounded by $C \delta$ for a positive constant $C$.

All told, we obtain that for any $v \in \RR^d$, there exists a $g \in \cG_\delta$ such that
\begin{equation*}
\left\|e^{j_v}\frac{q_\eps^x(y)}{q(y)} - g\right\|_{L^\infty(Q)} \lesssim K \delta\,,
\end{equation*}
where $K \lesssim 1 + \eps^{-d/2}$.
Moreover, \cref{quadratic_ub} implies that, for any $g \in \cG_\delta$,
\begin{equation*}
\|g\|_{L^\infty(Q)} \leq \sup_{y \in \supp(Q)} \frac{1}{Z_\eps(x) \Lambda_\eps q(y)} \leq K.
\end{equation*}

By a volume argument, we may choose $\cN_\delta$ such that it satisfies
\begin{equation*}
\log |\cN_\delta| \lesssim \log(1/\delta).
\end{equation*}
We therefore obtain for any $\tau \leq K$,
\begin{equation*}
\log N(\tau, \cJ_\eps, \|\cdot\|_{L^\infty(Q)}) \leq \log |\cG_{\tau/K}| \lesssim \log(K/\tau)\,,
\end{equation*}
as claimed.
\end{proof}

Returning to the empirical process, we obtain by a chaining bound~\citep[Theorem 3.5.1]{GinNic16}
\begin{align*}
\E \sup_{h: \Omega \to \RR^d} \int e^{j_h(x, y)} \frac{q_\eps^x(y)}{q(y)} (\rd Q_n - \rd Q)(y) & = \E \sup_{j \in \cJ} \int j(y) (\rd Q_n - \rd Q)(y) \\
&\lesssim n^{-1/2} \int_0^K \sqrt{\log(K/ \tau)} \dd \tau \\
& \lesssim K n^{-1/2}\,.
\end{align*}
Recalling that $K \lesssim (1+ \eps^{-d/2})$ completes the proof.
\end{proof}

\begin{lemma}\label{empirical_process}
For a convex, compact $K \sse \R^d$, for any real number $s \geq d/2$, and $M > 0$, let $\cC^s(K;M)$ be the set of $s$-H\"older smooth functions with H\"older norm bounded by $M$. For any probability measure $\nu$ with support contained in $K$ and corresponding empirical measure $\nu_n$, we have that
\begin{equation*}
\E \sup_{g \in \cC^s(K;M)} \int  g(y) (\rd \nu_n(y) - \rd \nu(y)) \lesssim C_K M \log(n) n^{-1/2}\,.
\end{equation*}
\end{lemma}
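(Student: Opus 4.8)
The plan is to recognise this as a textbook empirical-process bound over a Hölder ball, whose only delicate feature is the borderline smoothness $s=d/2$. First I would observe that $\cC^s(K;M)$ is symmetric under $g\mapsto -g$, so
\[
\E \sup_{g\in\cC^s(K;M)} \int g\,(\rd\nu_n-\rd\nu) = \E \sup_{g\in\cC^s(K;M)} \Bigl|\int g\,(\rd\nu_n-\rd\nu)\Bigr|,
\]
and then apply the standard symmetrization inequality \citep[cf.][]{GinNic16} to reduce the problem to bounding the Rademacher complexity $\E\sup_g \bigl|n^{-1}\sum_{i=1}^n \ep_i g(Y_i)\bigr|$, where $\ep_1,\dots,\ep_n$ are i.i.d.\ sign variables independent of $Y_1,\dots,Y_n$.

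Next I would bound this Rademacher average by a truncated Dudley entropy integral in the uniform metric. Conditionally on the sample, $g\mapsto n^{-1}\sum_i \ep_i g(Y_i)$ is a sub-Gaussian process with respect to $\|g-g'\|_{L^2(\nu_n)}\le\|g-g'\|_{L^\infty(K)}$, so a chaining argument \citep[Theorem~3.5.1]{GinNic16} gives, for every $\delta>0$,
\[
\E_\ep \sup_g \Bigl|\tfrac1n\textstyle\sum_i \ep_i g(Y_i)\Bigr| \lesssim \delta + n^{-1/2}\int_\delta^{D}\sqrt{\log N\bigl(\tau,\cC^s(K;M),\|\cdot\|_{L^\infty(K)}\bigr)}\,\rd\tau,
\]
where $D\lesssim C_K M$ bounds the sup-norm diameter of the class (any $g$ with Hölder norm at most $M$ has $\|g\|_{L^\infty(K)}\lesssim C_K M$ on the compact set $K$). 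I would then invoke the classical Kolmogorov--Tikhomirov estimate for Hölder balls, $\log N(\tau,\cC^s(K;M),\|\cdot\|_{L^\infty(K)})\lesssim C_K (M/\tau)^{d/s}$, valid for $\tau\le D$.

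The last step is to evaluate the integral using $s\ge d/2$, i.e.\ $d/s\le 2$. If $d/s<2$ (which happens whenever $d$ is odd or $s>d/2$), then $\int_0^{D}\tau^{-d/(2s)}\,\rd\tau$ converges and is $\lesssim C_K M$, so letting $\delta\downarrow 0$ already yields $\lesssim C_K M\,n^{-1/2}$. In the borderline case $d/s=2$ (possible only for $d$ even and $s=d/2$) the integrand behaves like $M/\tau$, the Dudley integral diverges logarithmically, and $\int_\delta^{D}\tau^{-1}\,\rd\tau\asymp\log(C_K M/\delta)$; choosing $\delta\asymp C_K M\,n^{-1/2}$ balances the two terms and gives $\lesssim C_K M\log(n)\,n^{-1/2}$. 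Since $n^{-1/2}\le\log(n)\,n^{-1/2}$, both regimes are covered by the stated bound, and undoing the symmetrization finishes the argument. The only place requiring real care is this borderline case: the $\log n$ factor in the statement is precisely the cost of the logarithmically divergent entropy integral at the critical exponent $s=d/2$, and one must check that the crude $\delta$-term genuinely dominates the sub-resolution fluctuations, which it does because a $\delta$-net in $\|\cdot\|_{L^\infty(K)}$ controls the process increments uniformly in the sample.
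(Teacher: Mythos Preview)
Your proposal is correct and follows essentially the same route as the paper: a truncated Dudley entropy integral combined with the standard covering-number estimate for H\"older balls, choosing $\delta\asymp n^{-1/2}$ to absorb the logarithmically divergent integral at the critical exponent. The only cosmetic differences are that the paper applies Dudley's bound directly to the empirical process (without the intermediate symmetrization step) and simply uses the worst-case integrand $\tau^{-1}$ for all $s\ge d/2$ rather than distinguishing the subcritical and critical cases.
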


\begin{proof}
We write $\cF$ to be the set of functions in $\cC^s(K;1)$. A version of Dudley's chaining bound \cite[see, e.g.,][Theorem 16]{LuxBou04} therefore implies for any $\delta \geq 0$,
\begin{align*}
\E \sup_{g \in \cC^s(K;M)} \int  g(y) (\rd \nu_n(y) - \rd \nu(y)) 
&\lesssim M \left(\delta + n^{-1/2}\int_\delta^1 \sqrt{\log N(\tau, \cF, \|\cdot\|_\infty)} \dd \tau\right)\,.
\end{align*}

Letting $s \geq d/2$ and applying standard covering number bounds for H\"older spaces \citep[Theorem 2.7.1]{VaaWel96} implies
\begin{equation*}
\E\sup_{g \in \cC^s(K;M)} \int   g(y) (\rd \nu_n(y) - \rd \nu(y)) \lesssim C_K \inf_{\delta \geq 0} M \left(\delta + n^{-1/2}\int_\delta^1 \tau^{-1} \dd \tau\right)\,.
\end{equation*}
Taking $\delta = n^{-1/2}$ yields
\begin{equation*}
	\E \sup_{g \in \cC^s(K;M)} \int   g(y) (\rd \nu_n(y) - \rd \nu(y)) \lesssim C_K Mn^{-1/2}(1 - \log(n^{-1/2} )) \lesssim C_K M n^{-1/2} \log n\,,
\end{equation*}
as claimed.
\end{proof}

\begin{lemma}\label{lem: genball_fix} Let $P$ and $Q$ be compactly supported, and let $(f_\eps,g_\eps)$ denote the optimal dual potentials corresponding to $S_\eps(P,Q)$. For any real number $s \geq 0$, the derivatives of $(f_{\eps},g_{\eps})$ up to order $s$ are bounded by $C_{s, d, K}(1 + \eps^{1-s})$ on any compact set $K$, where $C_{s,d,K} > 0$ is some constant independent of $\eps$.
\end{lemma}
\begin{proof}
It suffices to show the claim for $f_\eps$. Let $r$ be a positive integer, and let $\lambda \in [0,1]$. By \cite[Theorem 2]{GenChiBac18}, it holds that 
\begin{align*}
\|f_\eps\|_{\cC^r} = O(1 + \eps^{1-r})\,.
\end{align*}
For any $s \geq 0$, we can write $s = r + (1-\lambda)$ for some $\lambda \in (0,1)$ and $r \in \NN$. Consequently, any such $s$ can be written as $s = \lambda r + (1-\lambda)(r+1)$, from which we can now apply an interpolation inequality between the two integers \citep{lunardi2009interpolation}:
\begin{align*}
 \|f_\eps\|_{\cC^{\lambda r + (1-\lambda)(r+1)}} 
&\lesssim \|f_\eps\|_{\cC^r}^\lambda\|f_\eps\|_{\cC^{r+1}}^{1-\lambda} \\
&\lesssim (1 + \eps^{1-r})^{\lambda}(1 + \eps^{-r})^{1-\lambda} \\
&\leq 1 + \eps^{(1-r)\lambda - r(1-\lambda)} \\
&= 1 + \eps^{-r + \lambda} \\
&= 1 + \eps^{1-s}\,.
\end{align*} 
Thus, $\| f_\eps \|_{\cC^s} = O(1 + \eps^{1-s})$ for any $s \geq 0$, as desired.
\end{proof}

\begin{corollary}\label{cor: emp_proc_special}
	If $P$ and $Q$ are compactly supported, then
\begin{align*}
\E S_\eps(P, Q_n) - S_\eps(P, Q) \lesssim (1 + \eps^{1 - d/2}) \log(n) n^{-1/2}\,.
\end{align*}
\end{corollary}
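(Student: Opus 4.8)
The plan is to compare the two Sinkhorn divergences through the dual problem \cref{eq: dual_eot}, exploiting that $Q_n$ and $Q$ enter only through a single coordinate. Let $(f_{\eps,n}, g_{\eps,n})$ be optimal entropic potentials for the pair $(P, Q_n)$, normalized so that \cref{dual_opt} holds. At optimality the double integral $\iint e^{\frac 1\eps(f_{\eps,n}(x) + g_{\eps,n}(y) - \frac 12\|x-y\|^2)}\dd P(x)\dd Q_n(y)$ equals $1$, so its contribution $-\eps$ in \cref{eq: dual_eot} cancels the additive $\eps$, giving $S_\eps(P, Q_n) = \int f_{\eps,n}\dd P + \int g_{\eps,n}\dd Q_n$. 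At the same time $(f_{\eps,n}, g_{\eps,n})$ is a feasible, though generally suboptimal, pair for the dual of $S_\eps(P, Q)$, so
\begin{equation*}
S_\eps(P, Q) \geq \int f_{\eps,n}\dd P + \int g_{\eps,n}\dd Q - \eps \iint e^{\frac 1\eps(f_{\eps,n}(x) + g_{\eps,n}(y) - \frac 12\|x-y\|^2)}\dd P(x)\dd Q(y) + \eps\,.
\end{equation*}

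The key point is that this last double integral is \emph{also} exactly $1$: the first normalization identity in \cref{dual_opt} for the pair $(P, Q_n)$ asserts $\int e^{\frac 1\eps(f_{\eps,n}(x) + g_{\eps,n}(y) - \frac 12\|x-y\|^2)}\dd P(x) = 1$ for \emph{every} $y \in \RR^d$, so integrating against $Q$ yields $1$ regardless of $Q$. Hence the penalty and the additive $\eps$ cancel here too, and subtracting the two displays --- the $\int f_{\eps,n}\dd P$ terms also cancel --- leaves the clean inequality
\begin{equation*}
S_\eps(P, Q_n) - S_\eps(P, Q) \leq \int g_{\eps,n}\,(\dd Q_n - \dd Q)\,.
\end{equation*}
The corollary therefore reduces to bounding $\E \int g_{\eps,n}\,(\dd Q_n - \dd Q)$, an empirical-process quantity involving only the $Q_n$-potential.

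Since $g_{\eps,n}$ depends on the sample I would not claim this term has mean zero, but instead bound it by a supremum over a deterministic smoothness class that is guaranteed to contain $g_{\eps,n}$. By \citet[Proposition 1]{GenChiBac18}, for the compactly supported pair $(P, Q_n)$ the potential $g_{\eps,n}$ has all derivatives up to order $s$ bounded on any fixed ball $K \supseteq \Omega$ by $C_{s,d,K}(1 + \eps^{1-s})$; taking $s = \ceil{d/2} = d'/2$ places $g_{\eps,n}$ restricted to $K$ inside the H\"older ball $\cC^{d'/2}(K; C(1 + \eps^{1-d'/2}))$ for a constant $C$ independent of the sample. Therefore
\begin{equation*}
\E \int g_{\eps,n}\,(\dd Q_n - \dd Q) \leq \E \sup_{g \in \cC^{d'/2}(K; C(1 + \eps^{1-d'/2}))} \int g\,(\dd Q_n - \dd Q)\,,
\end{equation*}
and \cref{empirical_process}, which applies since $d'/2 = \ceil{d/2} \geq d/2$, bounds the right-hand side by a constant times $(1 + \eps^{1-d'/2})\log(n)\,n^{-1/2}$ --- exactly the claimed rate.

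I do not anticipate a serious obstacle. The one delicate step is noticing that the exponential penalty in the $(P,Q)$-dual, evaluated at the $(P,Q_n)$-potentials, equals $1$ identically; this is precisely the pointwise (rather than merely $Q_n$-almost-everywhere) normalization supplied by \cref{dual_opt}, and it is what collapses the comparison to a single empirical-process term instead of one involving both potentials and both marginals. The remaining ingredients --- the $\eps$-dependent regularity of Sinkhorn potentials from \citet{GenChiBac18} and the chaining bound recorded in \cref{empirical_process} --- are already available.
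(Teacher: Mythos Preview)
Your proof is correct and follows the same route as the paper: reduce $S_\eps(P,Q_n)-S_\eps(P,Q)$ to $\int g_{\eps,n}\,(\dd Q_n-\dd Q)$ via the dual \cref{eq: dual_eot} and the pointwise normalization in \cref{dual_opt}, then invoke the $\cC^s$ regularity of entropic potentials from \citet{GenChiBac18} together with the chaining bound of \cref{empirical_process}. Your choice $s=\ceil{d/2}=d'/2$ is exactly what produces the stated rate $(1+\eps^{1-d'/2})\log(n)\,n^{-1/2}$; the paper's proof writes $s=2\ceil{d/2}$, which appears to be a slip since that value would yield the weaker bound $(1+\eps^{1-d'})\log(n)\,n^{-1/2}$.
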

\begin{proof}
	Let $(f_{\eps, n}, g_{\eps_n})$ be the optimal dual potentials for $P$ and $Q_n$.
	Following \cite[Proposition 2]{MenNil19}, observe that
\begin{align*}
S_\eps(\mu, \nu_n) -S_\eps(\mu, \nu) & = \int {f}_{(\eps,n)} \dd \mu + \int {g}_{(\eps,n)} \dd \nu_n -\sup_{f, g} \Big\{\int f \dd \mu + \int g \dd \nu \\
& \quad \quad - \eps \iint e^{(f(x) + g(y) - \frac 12 \|x - y\|^2)/\eps} \dd \mu(x) \dd \nu(y) + \eps\Big\} \\
& \leq \int {g}_{(\eps,n)}(y)(\rd \nu_n(y) - \rd \nu(y))\,,
\end{align*}
where the bound follows from choosing $({f}_{(\eps,n)},{g}_{(\eps,n)})$ in the supremum and using
\begin{equation*}
\int e^{({f}_{(\eps,n)}(x) + {g}_{(\eps,n)}(y) - \frac 12 \|x - y\|^2)/\eps} \dd \mu(x) = 1 \quad \forall y \in \RR^d
\end{equation*}
by the dual optimality condition \cref{dual_opt}.

We conclude by applying \Cref{lem: genball_fix}:
the derivatives of $g_{\eps, n}$ up to order $s$ are bounded by $C_{s, d, K}(1 + \eps^{1-s})$ on any compact set $K$ for any $s \geq 0$, so we may take $K$ to be a suitably large ball containing the support of $P$ and $Q$ and apply \Cref{empirical_process} with $s = d/2$.
\end{proof}

\section{Proof of Theorem \ref{thm: adaptive_estimation}}\label{sec: adaptivity}
We recall the notation from the main text. For convenience, we consider $\alpha \geq 1 + \iota$ for some $\iota>0$ sufficiently small, but fixed. Let $s := \alpha + 1$, which defines the regularity of the conjugate Brenier potential $\varphi_0^*$, thus $s \in [2 + \iota,4]$ for our problem considerations, since smoothness is capped at $\alpha=3$. Let $\cS$ be the following discrete subset
\begin{align*}
    \cS \defeq \{s_{\min} = s_1 < s_2 < \cdots < s_N = s_{\max}\}\,,
\end{align*}
where $s_{\min} = 2+\iota$, $s_N = 4$, with increments $s_j - s_{j-1} \asymp (\log n)^{-1}$, and set
\begin{align*}
    \eps_s = (n/\log n)^{-1/2(d+s)}\,, \quad \psi_n(s) = (\eps_s)^s = (n/\log n)^{-s/2(d+s)}\,.
\end{align*}
Let $\DD_n := \{(X_i,Y_i)\}_{i=1}^n$ denote our initial dataset with hold-out dataset $\DD_n'$. The latter gives rise to empirical measures $P_n'$ and $Q_n'$. Our choice of smoothness parameter is given by the following rule:
\begin{align}\label{eq: lepski_rule_app}
    \hat{s} \defeq \max\{ s \in \cS \ : \ \| \hat{T}_{\eps_{s}} - \hat{T}_{\eps_{s'}}\|_{L^2(P_n')}^2 \lesssim \psi_n(s')\,, \forall \ s' \leq s, s' \in \cS \}\,.
\end{align}

The proof closely follows an exposition of Lepski's method due to \cite{hutter2017notes}. 

For a given probability measure and its empirical counterpart from $n$ samples, written $\rho$ and $\rho_n$, we will frequently return to the empirical process over a given function class $\cM$, written
\begin{align*}
\| \rho - \rho_n\|_{\cM} \defeq \sup_{f \in \cM}\left| \int f \dd(\rho - \rho_n) \right|\,.
\end{align*}
We will consider the following function classes: $\cF_\eps$ will denote the class of entropic Kantorovich potentials for a regularization parameter $\eps$, and $\cJ_\eps$ be the function class from \cref{eq: J_eps_class}. $\cH_N$ will denote the random, $P_n$-measurable set of $N^2$ bounded functions of the form
\begin{align}
\| \hat{T}_{s_i}(x) - \hat{T}_{s_j}(x)\|^2_2\,,
\end{align}
for $i,j \in \{1,2,\ldots,N\}$, where we recall that $N$ is the cardinality of $\cS$.

Without loss of generality, we can assume $\varphi_0^* \in \cC^{s_i}$ for some $s_i \in \cS$. We define the event $\cE_j \defeq \{\hat{s} = s_j\}$ for all $j \in [N]$, and denote our estimator by $\hat{T}_{\hat{s}}$ (for clarity, we omit the explicit dependence on $\eps$). The ratio between the risk of $\hat{T}_{\hat{s}}$ and the oracle rate $\psi_n(s_i)$ can be written as 
\begin{align*}
    \E[\| \hat{T}_{\hat{s}} - T_0\|^2_{L^2(P)} \psi_n(s_i)^{-1}] 
    &= \sum_{j=1}^{i-1} \E[\| \hat{T}_{s_j} - T_0\|^2_{L^2(P)} \psi_n(s_i)^{-1} \bm{1}(\cE_j)] \\
    &\quad + \sum_{j=i}^N \E[\| \hat{T}_{s_j} - T_0\|^2_{L^2(P)} \psi_n(s_i)^{-1} \bm{1}(\cE_j)]\,.
\end{align*}
Our goal is to show that the right-hand side is upper bounded by an absolute constant. We study the two terms above separately.

Let us first focus on the terms where $j \geq i$, i.e. our estimator of the smoothness of the optimal transport map is larger than the actual smoothness parameter. Inside the expectation, we can write via Young's inequality
\begin{align*}
\|\hat{T}_{s_j} - T_0\|^2_{L^2(P)} & \lesssim \|\hat{T}_{s_j} - \hat{T}_{s_i}\|^2_{L^2(P)} +  \| \hat{T}_{s_i} - T_0\|^2_{L^2(P)} \\
&= \|\hat{T}_{s_j} - \hat{T}_{s_i}\|^2_{L^2(P_n')} + \| \hat{T}_{s_i} - T_0\|^2_{L^2(P)} + \int \tilde{h}\dd(P - P_n') \\
&\leq  \|\hat{T}_{s_j} - \hat{T}_{s_i}\|^2_{L^2(P_n')} + \| \hat{T}_{s_i} - T_0\|^2_{L^2(P)} + \|P - P_n'\|_{\cH_N}\,,
\end{align*}
where $\tilde h = \|\hat{T}_{s_j} - \hat{T}_{s_i}\|^2_2$. We conclude by taking expectations. The first term on the right-hand side is bounded by $\psi_n(s_i)$: our estimator $\hat{s}=s_j$ under the event $\cE_j$, and our criterion for $\hat{s}$, namely \cref{eq: lepski_rule_app}, and $s_i \leq s_j$ by assumption.
For the second term: as $\phi_0^* \in \cC^{s_i}$, our main theorem (\cref{thm:main}) tells us that
$$\E\|\hat T_{s_i} - T_0\|_{L^2(P)}^2 \lesssim \psi_n(s_i)\,.$$ 
The third term, by Hoeffding's inequality and a union bound, satisfies
\begin{align*}
\E\|P_n' - P\|_{\cH_N} = \E[\E[\|P_n' - P\|_{\cH_N} \mid P_n]] \lesssim \log\log(n)/\sqrt{n}\,,
\end{align*}
where we used that $N \asymp \log n$. Note that the third term is in fact faster than any $\psi_n(s_i)$ for any choice of $s_i \in \cS$.
Altogether, this gives the following bound
\begin{align*}
        \sum_{j=i}^N \E[\| \hat{T}_{s_j} - T_0\|^2_{L^2(P)} \psi_n(s_i)^{-1} \bm{1}(\cE_j)] &\lesssim (c_0^2 +  \tilde{c}_0^2)\sum_{j=i}^N \PP(\cE_j) + \bar{c}_0^2 \leq C_0\,,
\end{align*}
for three different constants $c_0, \tilde{c}_0, \bar{c}_0 > 0$.

We now turn our attention to the case where $j < i$, which is more technical. Focusing on one term in the summand, we want to choose $t_j$ to appropriately balance
\begin{align*}
    \E[\|\hat{T}_{s_j} - T_0\|^2_{L^2(P)}\psi_n(s_i)^{-1}\bm{1}(\cE_j)] \leq t_j \PP(\cE_j) + \int_{t_j}^\infty \PP(\|\hat{T}_{s_j} - T_0\|^2_{L^2(P)}\psi_n(s_i)^{-1} \geq t) \dd t\,.
\end{align*}

By definition of the estimator, we can upper bound $\PP(\cE_j)$ by two events, leading to 
\begin{align}\label{eq: proba_bound_adaptive}
    \PP(\cE_j) \leq \sum_{l=1}^{i-1} \left( \PP(\|\hat{T}_{s_i} - T_0\|^2_{L^2(P_n')}\psi_n(s_l)^{-1} > c_0^2/4) + \PP(\|\hat{T}_{s_l} - T_0\|^2_{L^2(P_n')}\psi_n(s_l)^{-1} > c_0^2/4) \right)\,.
\end{align}
Indeed, since $s_j < s_i$ and since we are on the set $\cE_j$, there must exist an $s_j < s' < s_i$ such that 
\begin{align*}
\| \hat{T}_{s_i} - \hat{T}_{s'}\|^2_{L^2(P_n')} \psi_n(s') > c_0.
\end{align*}
By Young's inequality, we can break this up into two possible events, whereby summing over all possible $s'$ gives the above bound (we replace $s'$ by $s_l$). Finally, we note that we also have the inequality
\begin{align*}
\PP(\|\hat{T}_{s_i} - T_0\|^2_{L^2(P_n')}\psi_n(s_l)^{-1} > c_0^2/4) \leq \PP(\|\hat{T}_{s_i} - T_0\|^2_{L^2(P_n')}\psi_n(s_i)^{-1} > c_0^2/4)\,,
\end{align*}
since $\psi_n(\cdot)$ is decreasing. It remains to bound these two tail probabilities across all $l < i$, where note the norm is measured in $L^2(P_n')$. To continue, we require the following lemma.
\begin{proposition}\label{prop: tail_bound}
There exist absolute constants $c, C > 0$ such that for $t \geq c$,
\begin{align*}
    \PP\left(\|\hat{T}_s - T_0\|^2_{L^2(P)} \psi_n(s)^{-1} \geq ct \right) \leq \exp\left(-\frac{t^2 \log(n)}{C}\right).
\end{align*}
\end{proposition}
\begin{proof}
For any choice of $s \in (2,4]$, it holds that 
\begin{align*}
\|\hat{T}_s - T_0\|^2_{L^2(P)}\lesssim \eps^{s/2} + \|Q_n - Q\|_{\cJ_\eps} + \eps^{-1}\|P_n - P\|_{\cF_\eps}\,,
\end{align*}
which stems from the calculations that appear between \cref{thm:one_sample} and \cref{thm:two_sample}. Both $\|Q_n - Q\|_{\cJ_\eps}$ and $\|P_n - P\|_{\cF_\eps}$ are subGaussian random variables via McDiarmid's inequality: for two constants $a,b>0$, it holds that for $t$ large enough 
\begin{align*}
    &\PP(\|Q_n - Q\|_{\cJ_\eps} \geq (1+t)(\eps^{-d}n^{-1})^{1/2}) \leq e^{-at^2/2}\,,\\
    &\PP(\eps^{-1}\|P_n - P\|_{\cF_\eps} \geq (\eps^{-1}n^{-1})^{1/2}t + \eps^{-d/2}n^{-1/2} )\leq e^{-bt^2/2}\,.
\end{align*}
Consequently, we can merge these via a union bound; taking the worst case constant, we have that for $t \geq c\eps^{-d/2}n^{-1/2}$, for $c > 0$ sufficiently large, it holds that
\begin{align*}
    \PP(\| \hat{T}_s - T_0\|^2_{L^2(P)} \gtrsim \eps^{s/2} + \eps^{-d/2}n^{-1/2}t) \leq e^{-ct^2/2}\,.
\end{align*}
Dividing through by $\psi_n(s) \defeq (n/\log(n))^{-\frac{s}{2(d+s)}}$ completes the proof.
\end{proof}

We can also obtain tail bounds under $L^2(P_n')$ at virtually no cost. Indeed, for any $s \in \cS$, 
\begin{align*}
\| \hat{T}_{s} - T_0\|^2_{L^2(P_n')} \lesssim \| \hat{T}_{s} - T_0\|^2_{L^2(P)} +\| P_n' - P\|_{\cH_N}\,,
\end{align*}
where the last term has expectation bounded above by $\log\log(n)n^{-1/2}$ up to a constant factor (indeed, since $T_0 = T_{s_i}$, this is perfectly fine at the cost of adding one more function to the set). By employing a further union bound, we can state \cref{prop: tail_bound} as
\begin{align}\label{eq: tail_bound_2}
    \PP\left(\|\hat{T}_s - T_0\|^2_{L^2(P_n')} \psi_n(s)^{-1} \geq ct \right) \leq 2\exp\left(-\frac{t^2 \log(n)}{C}\right),
\end{align}
for any $s \in \cS$, where the constants that appear are slightly different. Indeed, since $\log \log(n)/\sqrt n \ll \psi_n(s)$, nothing is lost by incorporating this additional term.

Returning to \cref{eq: proba_bound_adaptive}, we can take $c_0$ sufficiently large in both terms, we can employ \cref{eq: tail_bound_2} for all the terms in the summand, which results in 
\begin{align*}
\PP(\cE_j) \leq n^{-c_0^2/(8C)}\,.
\end{align*}

For the integrated tail, we use a similar argument, appealing to \cref{prop: tail_bound} directly. Indeed, for $t \geq C \psi_n(s_j)/\psi_n(s_i)$, the following bound holds:
\begin{align}
    \PP\left(\|\hat{T}_{s_j} - T_0\|^2_{L^2(P)} \psi_n(s_i)^{-1} \geq t \right) \leq \exp\left(-\frac{t^2 \log(n)}{C} \frac{\psi_n(s_i)}{\psi_n(s_j)} \right).
\end{align}
Choosing $t_j =c_1\sqrt{\psi_n(s_j)/\psi_n(s_i)}$, the tail can be upper bounded as
\begin{align*}
\int_{t_j}^\infty \exp\left(-\frac{t^2 \log(n)}{C} \frac{\psi_n(s_i)}{\psi_n(s_j) }\right) \dd t &\leq \left( \frac{\psi_n(s_j) C}{\psi_n(s_i) \log(n)} \right) \sqrt{ \frac{\psi_n(s_i)}{\psi_n(s_j)c_1^2}}\exp\left(-\frac{c_1\log(n)}{C}\right) \\
&= \sqrt{ \frac{\psi_n(s_j)}{\psi_n(s_i)} }\frac{C}{c_1\log(n)} \exp\left(-\frac{c_1\log(n)}{C}\right)\,.
\end{align*}
Merging everything together, we obtain rather crudely that
\begin{align*}
\sum_{j=1}^{i-1}\E[\|\hat{T}_{s_j} - T_0\|^2_{L^2(P)}\psi_n(s_i)^{-1}\bm{1}(\cE_j)] &\leq \sum_{j=1}^{i-1}\left(t_j n^{-c_0^2/(8C)} + \sqrt{ \frac{\psi_n(s_j)}{\psi_n(s_i)} }\frac{C}{c_1\log(n)}\exp\left(-\frac{c_1\log(n)}{C}\right)\right) \\
&\leq \sum_{j=1}^{i-1}\frac{1}{\log n}\\
&\asymp 1\,,
\end{align*}
since there exist $N \asymp \log(n)$ terms. This completes the proof.

\end{appendix}

\bibliographystyle{abbrvnat_weed}
\bibliography{arammain}

\end{document}